\numberwithin{equation}{section}
\newtheorem{theorem}{Theorem}[section]
\newtheorem{lemma}[theorem]{Lemma}
\newtheorem{proposition}[theorem]{Proposition}
\newtheorem{corollary}[theorem]{Corollary}
\theoremstyle{definition}
\newtheorem{definition and remark}[theorem]{Definition and Remark}
\newtheorem{remark}[theorem]{Remark}
\newtheorem{remark and definition}[theorem]{Remark and Definition}
\newtheorem{remark and notation}[theorem]{Remark and Notation}
\newtheorem{notation and remark}[theorem]{Notation and Remark}
\newtheorem{notation and convention}[theorem]{Notation and Convention}
\newtheorem{notation and remarks}[theorem]{Notation and Remarks}
\newtheorem{notation and reminder}[theorem]{Notation and Reminder}
\newtheorem{construction and examples}[theorem]{Construction and Examples}
\newtheorem{problem and remark}[theorem]{Problem and Remark}
\newcommand\reg{\operatorname{reg}}
\begin{document}

\title[PROJECTIVE VARIETIES OF MAXIMAL SECTIONAL REGULARITY]
         {PROJECTIVE VARIETIES OF MAXIMAL SECTIONAL REGULARITY}

\author{Markus BRODMANN, Wanseok LEE, Euisung PARK, Peter SCHENZEL}

\address{Universit\"at Z\"urich, Institut f\"ur Mathematik, Winterthurerstrasse 190, CH -- Z\"urich, Switzerland}
\email{brodmann@math.unizh.ch}

\address{Department of Applied Mathematics, Pukyong National University, Busan 608-737, Korea}
\email{wslee@pknu.ac.kr}

\address{Korea University, Department of Mathematics, Anam-dong, Seongbuk-gu, Seoul 136-701, Republic of Korea}
\email{euisungpark@korea.ac.kr}

\address{Martin-Luther-Universit\"at Halle-Wittenberg,
Institut f\"ur Informatik, Von-Secken\-dorff-Platz 1, D -- 06120
Halle (Saale), Germany} \email{schenzel@informatik.uni-halle.de}

\date{Seoul, 06. January 2015}

\subjclass[2]{Primary: 14H45, 13D02.}

\keywords{Castelnuovo-Mumford regularity, variety of maximal
sectional regularity, extremal secant locus}

\begin{abstract}
We study projective varieties $X \subset \mathbb{P}^r$ of dimension
$n \geq 2$, of codimension $c \geq 3$ and of degree $d \geq c + 3$
that are of maximal sectional regularity, i.e. varieties for which
the Castelnuovo-Mumford regularity $\reg (\mathcal{C})$ of a general
linear curve section is equal to $d -c+1$, the maximal possible
value (see \cite{GruLPe}). As one of the main results we classify
all varieties of maximal sectional regularity. If $X$ is a variety
of maximal sectional regularity, then either (a) it is a divisor on
a rational normal $(n+1)$-fold scroll $Y \subset \mathbb{P}^{n+3}$
or else (b) there is an $n$-dimensional linear subspace $\mathbb{F}
\subset \mathbb{P}^r$ such that $X \cap \mathbb{F} \subset
\mathbb{F}$ is a hypersurface of degree $d-c+1$. Moreover, suppose
that $n = 2$ or the characteristic of the ground field is zero. Then
in case (b) we obtain a precise description of $X$ as a birational
linear projection of a rational normal $n$-fold scroll.
\end{abstract}

\maketitle \thispagestyle{empty}

\section{Introduction}

\noindent Let $X \subset \mathbb{P}^r$ be a nondegenerate
irreducible projective variety of dimension $n$, codimension $c>1$
and degree $d$ over an algebraically closed field $\Bbbk$. D.
Mumford\cite{M} has defined $X$ to be $m$-\textit{regular} if its
ideal sheaf $\mathcal{I}_X$ satisfies the following vanishing
condition
\begin{equation*}
H^i(\mathbb{P}^r, \mathcal{I}_X(m-i)) = 0 \mbox{ for all } i \geq 1.
\end{equation*}
The $m$-regularity condition implies the $(m+1)$-regularity
condition, so that one defines the \textit{Castelnuovo-Mumford
regularity} $\reg(X)$ of $X$ as the least integer $m$ such that $X$
is $m$-regular. It is well known that if $X$ is $m$-regular then its
homogeneous ideal is generated by forms of degree $\leq m$. This
algebraic implication of $m$-regularity has an elementary geometric
consequence that any $(m+1)$-secant line to $X$ should be contained
in $X$. We say that a linear space $L \subset \mathbb{P}^r$ is
$k$-secant to $X$ if
\begin{equation*}
\rm{length}(X \cap L) := \rm{dim}_{\Bbbk}
(\mathcal{O}_{\mathbb{P}^r} / \mathcal{I}_X + \mathcal{I}_L) \geq k.
\end{equation*}

A well known conjecture due to Eisenbud and Goto (see \cite{EG})
says that
\begin{equation}
\reg(X) \leq d-c+1.
\end{equation}
Obviously this conjecture implies the following conjecture
\begin{equation}
X ~ \mbox{has no proper $k$-secant line if $k>d-c+1$.}
\end{equation}
So far the conjecture (1.1) has been proved only for irreducible but
not necessarily smooth curves by
Gruson-Lazarsfeld-Peskine\cite{GruLPe} and for smooth complex
surfaces by H. Pinkham\cite{Pi} and R. Lazarsfeld\cite{L}. Moreover,
in \cite{GruLPe} the curves in $\mathbb{P}^r$ whose regularity takes
the maximally possible value $d-r+2$ are completely classified: they
are either of degree $\leq r+1$ or else smooth rational curves
having a $(d-r+2)$-secant line. The statement (1.2) is known to be
true when $X$ is locally Cohen-Macaulay (see Theorem 1 in \cite{N}).
But it is still unknown for arbitrary varieties.

The main subject of the present paper is to study the geometry of
proper $(d-c+1)$-secant lines to a projective variety. To this aim,
we investigate the \textit{extremal secant locus} $\Sigma (X)$ of
$X$, that is, the closure of the set of all proper $(d-c+1)$-secant
lines to $X$ in the Grassmannian $\mathbb{G}(1,\mathbb{P}^r)$. Of
course, if the extremal secant locus of $X$ is nonempty then its
regularity is at least $d-c+1$ and so such a variety will play an
important role in the natural problem of classifying all extremal
varieties with respect to the above regularity conjecture. For $d
\geq c+3$, Gruson-Lazarsfeld-Peskine's result in \cite{GruLPe}
provides a complete classification of curves having a
$(d-c+1)$-secant line. They should be smooth and rational. M. A.
Bertin\cite{Be} generalizes this result to higher dimensional smooth
varieties. She proves the conjecture (1.1) for smooth rational
scrolls -- which is reproved in \cite{KwP} -- and shows that if $X$
is a smooth variety having a $(d-c+1)$-secant line then it should be
the linear regular projection of a smooth rational normal scroll.
Later, A. Noma\cite{N} obtains a very nice description of those
smooth rational scrolls.

In Theorem \ref{thm:maximaldimension} we show that if $c \geq 3$ and
$d \geq c+3$, then the dimension of $\Sigma (X)$ is at most $2n-2$
and the equality is attained if and only if a general linear curve
section of $X$ has the maximal Castelnuovo-Mumford regularity
$d-c+1$. We will say that $X$ is a \textit{variety of maximal
sectional regularity} if its general linear curve section is of
maximal regularity (cf. \cite{BS5}).

To complete the result starting with Theorem
\ref{thm:maximaldimension}, it is natural to ask for a
classification of all varieties of maximal sectional regularity.
This is the contents of Theorem \ref{thm:classificationsurface} and
Theorem \ref{thm:classificationhigherdimensional}. More precisely,
for $c \geq 3$ and $d \geq c+3$ we obtain a classification of
surfaces of maximal sectional regularity in Theorem
\ref{thm:classificationsurface} and a classification of higher
dimensional varieties of maximal sectional regularity in Theorem
\ref{thm:classificationhigherdimensional}. It turns out that $X
\subset \mathbb{P}^r$ is variety of maximal sectional regularity if
and only if it is one of the followings:
\begin{itemize}
\item[(a)] $c=3$ and $X$ is a divisor of the $(n+1)$-fold scroll $Y = S(\underbrace{0,\ldots,0}_{(n-2)-\rm{times}},1,1,1) \subset \mathbb{P}^{n+3}$ such that $X$ is linearly equivalent to $H + (d-3)F$, where $H$ is the hyperplane divisor of $Y$ and $F \subset Y$ is a linear subspace of dimension $n$;
\item[(b)] There exists an $n$-dimensional linear subspace $\mathbb{F} \subset \mathbb{P}^r$ such that $X \cap \mathbb{F}$ in $\mathbb{F}$ is a hypersurface of degree $d-c+1$.
\end{itemize}
In particular, there exist varieties $X \subset \mathbb{P}^r$ of
maximal sectional regularity of dimension $n$, of codimension $c$
and of degree $d$ for any given $(n,c,d)$ with $n \geq 2$, $c \geq
3$ and $d \geq c+3$. Furthermore, assume that $\rm{char}(\Bbbk)=0$
or $n=2$. Then in case (b), we obtain a very precise description of
$X$ as a birational linear projection of a rational normal $n$-fold
scroll. See Theorem \ref{thm:classificationsurface} and Theorem
\ref{thm:classificationhigherdimensional}.

\begin{remark}
Let $X \subset \mathbb{P}^r$ be a nondegenerate irreducible
projective variety of dimension $n \geq 2$, codimension $c \geq 2$
and degree $d$. Thus $d \geq c+1$.

\noindent $(1)$ Our subject of the present paper is quite well
understood if $d \leq c+2$. More precisely, varieties of minimal
degree (i.e. $d=c+1$) are characterized by the $2$-regularity (cf.
\cite{EG}). Of course, these varieties have many proper secant
lines. If $d=c+2$, then $\rm{reg}(X)=3$ but $X$ may be cut out by
quadrics and so it may have no tri-secant lines (cf. \cite{HSV},
\cite{P2}).

\noindent $(2)$ One can naturally ask whether $X$ satisfies the
regularity bound in (1.1) when it has a nonempty extremal secant
locus and hence $\rm{reg}(X) \geq d-c+1$. By M. A. Bertin's work in
\cite{Be}, the answer for this question is ``YES" when $X$ is
smooth. But it is unknown if $X$ is a singular variety. In this
direction, the authors in \cite{BLPS1} study various cohomological
and homological properties of $X$ when it is a surface of maximal
sectional regularity. In particular, it is shown that such a surface
achieves the regularity bound in (1.1).
\end{remark}

\section{Curves of maximal regularity}

\noindent In this section we prove some results on curves of maximal
regularity, which will be useful for our later investigations. We
first fix a few notations which we shall keep for the rest of this
paper.

\begin{notation and remarks}\label{nar:curveofmaxreg}
Let $\mathcal{C} \subset \mathbb{P}^r$ be a nondegenerate
projective integral curve of degree $d$.
\begin{itemize}
\item[\rm{(A)}] In their fundamental paper (cf. \cite{GruLPe}) Gruson,
Lazarsfeld and Peskine have shown that
\begin{equation*}
\reg(\mathcal{C}) \leq d-r+2
\end{equation*}
and the equality is attained if and only if either $d \leq r+1$ or
else $d \geq r+2$ and $\mathcal{C}$ is a smooth rational curve
having a $(d-r+2)$-secant line. We say that $\mathcal{C}$ is of
\textit{maximal regularity} if $\reg(\mathcal{C}) = d-r+2$.

\item[\rm{(B)}] If $d \geq r+2$ and $\mathcal{C}$ is a curve of maximal
regularity, then $\mathcal{C}$ is the regular projection of a
rational normal curve $\widetilde{\mathcal{C}} \subset \mathbb{P}^d$
of degree $d$ and hence $\mathcal{C}$ is not linearly normal.

\item[\rm{(C)}] Let $r \geq 4$ and $d \geq r+2$. According to \cite[Remark 3.1(C)]{BS2}, if $\mathcal{C}$ is
of maximal regularity then the $(d-r+2)$-secant line of statement
(A) is uniquely determined. We will denote this line by
$\mathbb{L}_{\mathcal{C}}$. Throughout this section we will see that
this line induces additional geometric properties of $\mathcal{C}$.
\end{itemize}
\end{notation and remarks}

\begin{notation and remarks}\label{nar:descriptionscrolls}
We recall a standard description of rational normal scrolls (cf.
\cite{Sch}). For the vector bundle
\begin{equation*}
\mathcal{E} = \mathcal{O}_{\mathbb{P}^1} (a_1) \oplus \cdots \oplus
\mathcal{O}_{\mathbb{P}^1} (a_n)
\end{equation*}
on $\mathbb{P}^1$ where $0 \leq a_1 \leq \cdots \leq a_n$ and $a_n
> 0$, the tautological line bundle $\mathcal{O}_{\mathbb{P} (\mathcal{E})}
(1)$ of $\mathbb{P} ( \mathcal{E})$ is globally generated and we
write $S(a_1,\cdots,a_n)$ for the image of the map defined by
$\mathcal{O}_{\mathbb{P} (\mathcal{E})} (1)$.

\begin{itemize}
\item[\rm{(A)}] It is well-known that $S(a_1,\cdots,a_n)$ is a
normal variety and has only rational singularities. Also the
homogeneous ideal of $S(a_1,\cdots,a_n)$ is generated by quadrics.
In particular, any tri-secant line to $S(a_1,\cdots,a_n)$ is
contained in $S(a_1,\cdots,a_n)$.

\item[\rm{(B)}] Suppose that $n \geq 2$. Then the
divisor class group of $\mathbb{P} ( \mathcal{E})$ is freely
generated by $\widetilde{H} \in |\mathcal{O}_{\mathbb{P}
(\mathcal{E})} (1)|$ and a linear subspace $\widetilde{F}$ of
dimension $n-1$. Moreover, if $a_{n-1} >0$ then the morphism
$\varphi : \mathbb{P} ( \mathcal{E}) \rightarrow S(a_1,\cdots,a_n)$
induces an isomorphism between the divisor class groups. Thus the
divisor class group of $S(a_1,\cdots,a_n)$ is freely generated by
the hyperplane divisor $H$ and a linear subspace $F$ of dimension
$n-1$. We refer the reader to \cite{Fe}.

\item[\rm{(C)}] One can compute explicitly
the dimension of $H^i (\mathbb{P} (\mathcal{E}),
\mathcal{O}_{\mathbb{P} (\mathcal{E})}
(a\widetilde{H}+b\widetilde{F}))$ by using the projective bundle map
$j :\mathbb{P} (\mathcal{E}) \rightarrow \mathbb{P}^1$. For example,
see \cite[Section 2]{Miy}.
\end{itemize}
\end{notation and remarks}

\begin{proposition}\label{prop:2.3}
Let $r \geq 4$ and $d \geq r+2$. Let $\mathcal{C} \subset
\mathbb{P}^r$ be a curve of degree $d$ which is of maximal
regularity and let $\mathbb{L}_{\mathcal{C}}$ be as in Notation and
Remarks \ref{nar:curveofmaxreg}(B). Then

\begin{itemize}
\item[\rm{(a)}] ${\rm Join}(\mathbb{L}_{\mathcal{C}},\mathcal{C})$
is projectively equivalent to the threefold scroll $S(0,0,r-2)$.
\item[\rm{(b)}] Suppose that $\mathcal{C}$ is contained in a
rational normal threefold scroll $Y := S(0,0,r-2)$. Then $Y = {\rm
Join}(\mathbb{L}_{\mathcal{C}},\mathcal{C})$ and the vertex
$\mathbb{L} = S(0,0) \subset S(0,0,r-2)$ of $Y$ is equal to
$\mathbb{L}_{\mathcal{C}}$. In particular, ${\rm
Join}(\mathbb{L}_{\mathcal{C}},\mathcal{C})$ is the only rational
normal threefold scroll which is projectively equivalent to
$S(0,0,r-2)$ and which contains $\mathcal{C}$.
\end{itemize}
\end{proposition}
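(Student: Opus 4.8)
The plan follows the two parts of the statement.

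For part (a) I would project away from $\mathbb{L}_{\mathcal{C}}$. Choose a linear complement $\mathbb{P}^{r-2}$ of $\mathbb{L}_{\mathcal{C}}$ in $\mathbb{P}^r$ and let $\pi\colon\mathbb{P}^r\dashrightarrow\mathbb{P}^{r-2}$ be the corresponding projection. Since $\mathcal{C}$ is non-degenerate and $r\geq 4$, it lies on no plane, so $\Gamma:=\pi(\mathcal{C})$ is an irreducible non-degenerate curve in $\mathbb{P}^{r-2}$, whence $\deg\Gamma\geq r-2$. The hyperplanes of $\mathbb{P}^r$ through $\mathbb{L}_{\mathcal{C}}$ cut out on $\mathcal{C}$ a linear system whose base scheme is exactly $\mathcal{C}\cap\mathbb{L}_{\mathcal{C}}$ (the linear forms vanishing on $\mathbb{L}_{\mathcal{C}}$ generate $\mathcal{I}_{\mathbb{L}_{\mathcal{C}}}$), and this base scheme has length $d-r+2$ because $\mathbb{L}_{\mathcal{C}}$ is a $(d-r+2)$-secant line; hence the moving part has degree $d-(d-r+2)=r-2$ and factors through $\mathcal{C}\to\Gamma$, so $\deg\Gamma\cdot\deg(\mathcal{C}\to\Gamma)=r-2$. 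Together with $\deg\Gamma\geq r-2$ this forces $\deg\Gamma=r-2$ and $\mathcal{C}\to\Gamma$ birational, so $\Gamma$ is a rational normal curve of degree $r-2$ in $\mathbb{P}^{r-2}$. Finally ${\rm Join}(\mathbb{L}_{\mathcal{C}},\mathcal{C})=\overline{\pi^{-1}(\Gamma)}$ is the cone over $\Gamma$ with vertex $\mathbb{L}_{\mathcal{C}}$, and every such cone is projectively equivalent to $S(0,0,r-2)$; this also records that $\mathbb{L}_{\mathcal{C}}$ is the vertex of ${\rm Join}(\mathbb{L}_{\mathcal{C}},\mathcal{C})$.

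For part (b) I would first reduce everything to identifying the vertex of $Y$ with $\mathbb{L}_{\mathcal{C}}$. Write $\mathbb{L}_Y:=S(0,0)\subset Y$ for the vertex line. Since $Y$ is the cone with vertex $\mathbb{L}_Y$ over the rational normal curve $\Gamma_Y:=S(r-2)\subset\mathbb{P}^{r-2}$, and since $\mathcal{C}$ (being non-degenerate) lies on no $\langle\mathbb{L}_Y,p\rangle$, the image $\pi_{\mathbb{L}_Y}(\mathcal{C})$ is an irreducible curve inside $\Gamma_Y$, hence equals $\Gamma_Y$; therefore ${\rm Join}(\mathbb{L}_Y,\mathcal{C})=\overline{\pi_{\mathbb{L}_Y}^{-1}(\Gamma_Y)}=Y$. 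So it suffices to prove $\mathbb{L}_Y=\mathbb{L}_{\mathcal{C}}$: then $Y={\rm Join}(\mathbb{L}_{\mathcal{C}},\mathcal{C})$, and since any rational normal threefold scroll projectively equivalent to $S(0,0,r-2)$ containing $\mathcal{C}$ is such a $Y$, it must coincide with ${\rm Join}(\mathbb{L}_{\mathcal{C}},\mathcal{C})$. (In passing: $\mathbb{L}_{\mathcal{C}}$ is a $(d-r+2)$-secant line of $\mathcal{C}\subset Y$ with $d-r+2\geq 3$, and $Y$ has homogeneous ideal generated by quadrics by Notation and Remarks~\ref{nar:descriptionscrolls}(A), so $\mathbb{L}_{\mathcal{C}}\subset Y$.)

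To prove $\mathbb{L}_Y=\mathbb{L}_{\mathcal{C}}$, I would show that $\mathbb{L}_Y$ is itself a $(d-r+2)$-secant line of $\mathcal{C}$, whence $\mathbb{L}_Y=\mathbb{L}_{\mathcal{C}}$ by the uniqueness in Notation and Remarks~\ref{nar:curveofmaxreg}(C). Pass to the rational normal curve $\widetilde{\mathcal{C}}\subset\mathbb{P}^d$ of degree $d$ of which $\mathcal{C}$ is the regular projection (Notation and Remarks~\ref{nar:curveofmaxreg}(B)), say $\mathcal{C}=\pi_\Lambda(\widetilde{\mathcal{C}})$ with centre $\Lambda\cong\mathbb{P}^{d-r-1}$ disjoint from $\widetilde{\mathcal{C}}$, and set $\widetilde{\mathbb{L}}_Y:=\overline{\pi_\Lambda^{-1}(\mathbb{L}_Y)}$, a $(d-r+1)$-plane containing $\Lambda$ with $\pi_\Lambda(\widetilde{\mathbb{L}}_Y)=\mathbb{L}_Y$. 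Because $\pi_\Lambda$ restricts to an isomorphism $\widetilde{\mathcal{C}}\to\mathcal{C}$ carrying $\widetilde{\mathcal{C}}\cap\widetilde{\mathbb{L}}_Y$ onto $\mathcal{C}\cap\mathbb{L}_Y$, and because a $(d-r+1)$-plane meets the rational normal curve $\widetilde{\mathcal{C}}$ in length at most $d-r+2$, it is enough to show that $\widetilde{\mathbb{L}}_Y$ is spanned by $d-r+2$ points of $\widetilde{\mathcal{C}}$. Now $\widetilde{Y}:=\overline{\pi_\Lambda^{-1}(Y)}$ is the cone with vertex $\widetilde{\mathbb{L}}_Y$ over a rational normal curve of degree $r-2$; it is an irreducible non-degenerate variety in $\mathbb{P}^d$ of dimension $d-r+3$ and of minimal degree $r-2$, and it contains $\widetilde{\mathcal{C}}$. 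Invoking the known description of varieties of minimal degree that contain a rational normal curve of degree $d$ in $\mathbb{P}^d$ — namely that such a variety is a join ${\rm Join}(Z,\widetilde{\mathcal{C}})$ for a subscheme $Z\subset\widetilde{\mathcal{C}}$ of length $\dim\widetilde{Y}-1=d-r+2$ — together with the fact that the vertex of ${\rm Join}(\langle Z\rangle,\widetilde{\mathcal{C}})$ is exactly $\langle Z\rangle$ (a dimension count, using that the projection of $\widetilde{\mathcal{C}}$ from the vertex is still a curve since $\widetilde{\mathcal{C}}$ spans $\mathbb{P}^d$), we obtain $\widetilde{\mathbb{L}}_Y=\langle Z\rangle$; thus $\widetilde{\mathbb{L}}_Y$ contains the length-$(d-r+2)$ subscheme $Z$ of $\widetilde{\mathcal{C}}$, hence meets $\widetilde{\mathcal{C}}$ in length exactly $d-r+2$, $\mathbb{L}_Y$ is $(d-r+2)$-secant to $\mathcal{C}$, and $\mathbb{L}_Y=\mathbb{L}_{\mathcal{C}}$.

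The main obstacle is precisely this last step. The difficulty is that a rational normal curve of degree $d$ lies on a whole family of minimal-degree varieties in $\mathbb{P}^d$ (on every join ${\rm Join}(Z,\widetilde{\mathcal{C}})$), so $\widetilde{Y}$ is not determined by $\widetilde{\mathcal{C}}$ alone; what singles it out is that its vertex contains the fixed centre $\Lambda$, and the structure theorem then turns the containment $\widetilde{\mathcal{C}}\subset\widetilde{Y}$ of the curve in this cone into the statement that the vertex is spanned by points of $\widetilde{\mathcal{C}}$, which is exactly the description of a $(d-r+2)$-secant $(d-r+1)$-plane. A more hands-on alternative (avoiding the structure theorem) is to analyse the lines on $Y$: every line on $Y$ other than $\mathbb{L}_Y$ lies on a ruling plane $P_\gamma=\langle\mathbb{L}_Y,\gamma\rangle$, and if $\mathbb{L}_{\mathcal{C}}\neq\mathbb{L}_Y$ one bounds $\length(\mathcal{C}\cap P_\gamma)$ against $d-r+2$ to force $\deg(\mathcal{C}\to\Gamma_Y)=1$ and hence the contradiction $\mathbb{L}_Y=\mathbb{L}_{\mathcal{C}}$; this runs smoothly for $r\geq 5$ but needs a supplementary argument when $r=4$, which is why I would favour the passage to $\mathbb{P}^d$.
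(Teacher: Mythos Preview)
Your argument for (a) is correct and is the same as the paper's, only with more detail on why the projected curve has degree exactly $r-2$.

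For (b), your main line of argument has a genuine gap: the ``known description'' you invoke is false. Concretely, take $d=6$, $r=4$, let $\widetilde{\mathcal{C}}\subset\mathbb{P}^6$ be the standard rational normal sextic $t\mapsto[1:t:\cdots:t^6]$, and let $V=\{x_0=x_3=x_6=0\}$, a $3$-plane. Projection from $V$ is $[x_0:x_3:x_6]=[1:t^3:t^6]$, a $3$-to-$1$ cover of the conic $y_0y_2=y_1^2$; hence the cone $\widetilde{Y}$ over that conic with vertex $V$ is a variety of minimal degree containing $\widetilde{\mathcal{C}}$, yet $V\cap\widetilde{\mathcal{C}}=\emptyset$, so $V$ is not spanned by any subscheme of $\widetilde{\mathcal{C}}$. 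Thus one cannot conclude, merely from $\widetilde{\mathcal{C}}\subset\widetilde{Y}$ and $\widetilde{Y}$ being of minimal degree, that the vertex $\widetilde{\mathbb{L}}_Y$ meets $\widetilde{\mathcal{C}}$ in length $d-r+2$. Your passage to $\mathbb{P}^d$ therefore does not deliver the secancy of $\mathbb{L}_Y$.

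The paper instead proceeds by contradiction, assuming $\mathbb{L}\neq\mathbb{L}_{\mathcal{C}}$, and stays entirely in $\mathbb{P}^r$. Since $\mathbb{L}_{\mathcal{C}}\subset Y$ (it is tri-secant and $Y$ is cut out by quadrics) and $\mathbb{L}$ is the vertex of the cone $Y$, the projection $\pi_{\mathbb{L}}$ sends $\mathbb{L}_{\mathcal{C}}$ into the rational normal curve $S(r-2)$, which contains no line; hence $\mathbb{L}$ and $\mathbb{L}_{\mathcal{C}}$ are coplanar. The induced morphism $\phi\colon\mathcal{C}\to S(r-2)$ then satisfies $\deg\phi\leq d/(r-2)$, while coplanarity forces $\phi(\mathcal{C}\cap\mathbb{L}_{\mathcal{C}})$ to be a single point, giving $\deg\phi\geq d-r+2$. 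These two bounds are incompatible for $d\geq r+2$ and $r\geq 4$, so $\mathbb{L}=\mathbb{L}_{\mathcal{C}}$. This is essentially the ``hands-on alternative'' you sketch at the end, and contrary to your worry it needs no supplementary argument when $r=4$: the coplanarity step and the inequality $d/(r-2)<d-r+2$ both go through uniformly.
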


\begin{proof}
(a):  Choose a subspace $\mathbb{P}^{r-2} \subset \mathbb{P}^r$
which does not meet $\mathbb{L}_{\mathcal{C}}$ and consider the
linear projection map
\begin{equation*}
\pi_{\mathbb{L}_{\mathcal{C}}} : \mathbb{P}^r  \setminus
\mathbb{L}_{\mathcal{C}} \rightarrow \mathbb{P}^{r-2}
\end{equation*}
of $\mathcal{C}$ from $\mathbb{L}_{\mathcal{C}}$. Then $\mathcal{C}'
:= \overline{\pi_{\mathbb{L}_{\mathcal{C}}} ( \mathcal{C} \setminus
\mathbb{L}_{\mathcal{C}})}$ is a nondegenerate rational normal curve
in $\mathbb{P}^{r-2}$ and ${\rm
Join}(\mathbb{L}_{\mathcal{C}},\mathcal{C})$ is the cone over
$\mathcal{C}'$ with vertex $\mathbb{L}_{\mathcal{C}}$.
\smallskip

(b): We assume that $\mathbb{L} \neq \mathbb{L}_{\mathcal{C}}$ and
aim for a contradiction. As $\rm{length}(\mathcal{C} \cap
\mathbb{L}_\mathcal{C}) > 2$ we have $\mathbb{L}_\mathcal{C} \subset
Y$ and hence $\langle \mathbb{L}, \mathbb{L}_\mathcal{C} \rangle
\subset Y$, so that $\mathbb{L}$ and $\mathbb{L}_\mathcal{C}$ are
coplanar. Now, consider the linear projection map $\pi_{\mathbb{L}}:
\mathbb{P}^r \setminus \mathbb{L} \rightarrow \mathbb{P}^{r-2}$. The
restriction map $\pi_{\mathbb{L}} \upharpoonright: Y \setminus
\mathbb{L} \rightarrow S(r-2)$ induces a dominant morphism
$\mathcal{C} \setminus (\mathcal{C} \cap \mathbb{L}) \rightarrow
S(r-2)$. As $\mathcal{C}$ is smooth, this morphism may be extended
to a surjective morphism $\phi: \mathcal{C} \twoheadrightarrow
S(r-2)$. This implies that
\begin{equation*}
\deg(\phi) = \frac{d-\rm{length}(\mathcal{C} \cap
\mathbb{L})}{\deg_{\mathbb{P}^{r-2}}(S(r-2))} \leq \frac{d}{r-2}.
\end{equation*}
As $\mathbb{L}$ and $\mathbb{L}_\mathcal{C}$ are coplanar,
$\phi(\mathcal{C}\cap \mathbb{L}_{\mathcal{C}}) =
\pi_{\mathbb{L}}(\mathbb{L}_{\mathcal{C}}\setminus \mathbb{L})$ is a
point, say $z \in S(r-2)$. As $S(r-2)$ is smooth, this implies that
\begin{equation*}
\deg(\phi) = \rm{length} (\phi^{-1}(z)) \geq \rm{length}(\mathcal{C}
\cap \mathbb{L}_\mathcal{C}) = d-r+2.
\end{equation*}
The two previous inequalities imply that $\frac{d}{r-2} \geq d-r+2$,
which is impossible since $d \geq r+2$. This contradiction shows
that $\mathbb{L} = \mathbb{L}_{\mathcal{C}}$ and hence proves our
claim.
\end{proof}

Let $\mathcal{C} \subset \mathbb{P}^r$ be as in the above
Proposition \ref{prop:2.3}. In the next Proposition \ref{prop:2.4},
we study the case where our curve $\mathcal{C}$ lies on a smooth
rational normal surface. Note that the threefold scroll ${\rm
Join}(\mathbb{L}_{\mathcal{C}}, \mathcal{C})=S(0,0,r-2)$ contains
many smooth rational normal surface scrolls projectively equivalent
to $S(1,r-2)$. For example, any isomorphism from
$\mathbb{L}_{\mathcal{C}}=S(0,0)$ to $\mathcal{C}'$ in the above
proof defines a rational normal surface scroll of type $S(1,r-2)$
which is contained in ${\rm Join}(\mathbb{L}_{\mathcal{C}},
\mathcal{C})$. Also it may happen that $\mathcal{C}$ is contained in
such a surface scroll.

\begin{proposition}\label{prop:2.4}
Let $\mathcal{C} \subset \mathbb{P}^r$ be as in Proposition
\ref{prop:2.3}. If $\mathcal{C}$ is contained in a smooth rational
normal surface scroll $S = S(\alpha,r-\alpha-1)$ for some $1 \leq
\alpha \leq \frac{r-1}{2}$, then
\begin{itemize}
\item[(a)] $\alpha=1$,
\item[(b)] $\mathcal{C}$ is linearly equivalent to the divisor
$H + (d-r+1)F$ where $H$ and $F$ respectively are a general
hyperplane section and a ruling line of $S$,
\item[(c)] $\mathbb{L}_{\mathcal{C}}$ is equal to the unique line section $S(1)
\subset S(1,r-2)$ of $S$, and
\item[(d)] $S$ is contained in ${\rm
Join}(\mathbb{L}_{\mathcal{C}}, \mathcal{C})$.
\end{itemize}
\end{proposition}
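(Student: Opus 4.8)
The plan is to transfer the question onto the smooth surface $S = S(\alpha,\beta)$, where $\beta := r-\alpha-1\geq\tfrac{r-1}{2}$, using that $\operatorname{Pic}(S)$ is generated by the hyperplane class $H$ and a ruling $F$ with $H^2 = r-1$, $H\cdot F = 1$, $F^2 = 0$, and that the minimal section $C_0\sim H-\beta F$ has $\deg C_0 = \alpha$ and $C_0^2 = \alpha-\beta$. Write $\mathcal{C}\sim aH+bF$; then $a\geq 1$ (as $\mathcal{C}\cdot F\geq 0$ and $a=0$ would make $\mathcal{C}$ degenerate), $\mathcal{C}\neq C_0$ because $d\geq r+2>\tfrac{r-1}{2}\geq\alpha=\deg C_0$, so the intersection number $\mathcal{C}\cdot C_0 = a\alpha+b$ is $\geq 0$, and $d = \mathcal{C}\cdot H = a(r-1)+b$. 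Next, exactly as in the proof of Proposition~\ref{prop:2.3}(b): the homogeneous ideal of $S$ is generated by quadrics (Notation and Remarks~\ref{nar:descriptionscrolls}(A)) and $\length(\mathcal{C}\cap\mathbb{L}_\mathcal{C})\geq d-r+2>2$, so $\mathbb{L}_\mathcal{C}\subseteq S$; and since $S$ is smooth and $\mathbb{L}_\mathcal{C}\not\subseteq\mathcal{C}$, the scheme $\mathcal{C}\cap\mathbb{L}_\mathcal{C}$ (the same whether formed in $\mathbb{P}^r$ or in $S$) has length equal to the intersection number $\mathcal{C}\cdot\mathbb{L}_\mathcal{C}$ taken on $S$, so $\mathcal{C}\cdot\mathbb{L}_\mathcal{C}\geq d-r+2$.

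The crux, and the step I expect to be the main obstacle, is to rule out that $\mathbb{L}_\mathcal{C}$ is a ruling of $S$. If $\mathbb{L}_\mathcal{C}=F$, then $d-r+2\leq\mathcal{C}\cdot F = a$, and substituting $b = d-a(r-1)$ into $a\alpha+b\geq 0$ gives $d\geq a\beta\geq(d-r+2)\beta$, hence $\beta\leq\tfrac{d}{d-r+2}=1+\tfrac{r-2}{d-r+2}\leq 1+\tfrac{r-2}{4}$ because $d\geq r+2$. Together with $\beta\geq\tfrac{r-1}{2}$ this forces $\tfrac{r-1}{2}\leq\tfrac{r+2}{4}$, i.e. $r\leq 4$; and for $r=4$ the only admissible values $\alpha=1,\beta=2$ violate $\beta\leq\tfrac{r+2}{4}=\tfrac32$ --- a contradiction in every case. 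So $\mathbb{L}_\mathcal{C}$ is a line on $S$ other than a ruling, and a short check with divisor classes pins such a line down: if $\ell\sim a'H+b'F$ is a non-ruling line then $a'\geq 1$, $b'=1-a'(r-1)$, and $\ell\cdot C_0 = 1-a'\beta$ must be $\geq 0$ unless $\ell=C_0$, which would force $a'\beta\leq 1$ and hence $r\leq 3$; thus $\mathbb{L}_\mathcal{C}=C_0$. As $C_0$ is a line, $\deg C_0=\alpha=1$, which is (a), and as $\beta-\alpha=r-3\geq 1$, the minimal section $C_0 = S(1)$ is the unique line on $S = S(1,r-2)$ that is not a ruling, which is (c).

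It remains to compute the class of $\mathcal{C}$ and to prove (d). With $\alpha=1$ and $\mathbb{L}_\mathcal{C}=C_0$ one gets $d-r+2\leq\mathcal{C}\cdot C_0 = a\alpha+b = a+b = d-a(r-2)$, so $a(r-2)\leq r-2$, giving $a=1$ and $b = d-r+1>0$; thus $\mathcal{C}\sim H+(d-r+1)F$, which is (b). For (d), every ruling $F$ of $S$ satisfies $F\cdot\mathbb{L}_\mathcal{C}=F\cdot C_0=1$ and $F\cdot\mathcal{C}=1$, so it meets $\mathbb{L}_\mathcal{C}$ and $\mathcal{C}$ each in one point, and those two points are distinct for all but the finitely many rulings through the finite set $\mathcal{C}\cap\mathbb{L}_\mathcal{C}$; any other ruling is $F=\langle p,q\rangle$ with $p\in\mathbb{L}_\mathcal{C}$, $q\in\mathcal{C}$, so $F\subseteq{\rm Join}(\mathbb{L}_\mathcal{C},\mathcal{C})$. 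These rulings sweep out a dense subset of $S$ and ${\rm Join}(\mathbb{L}_\mathcal{C},\mathcal{C})$ is closed, whence $S\subseteq{\rm Join}(\mathbb{L}_\mathcal{C},\mathcal{C})$. Equivalently, $\pi_{\mathbb{L}_\mathcal{C}}$ collapses each ruling of $S$ to a point, so $\pi_{\mathbb{L}_\mathcal{C}}(S)$ is an irreducible curve containing the rational normal curve $\mathcal{C}'=\pi_{\mathbb{L}_\mathcal{C}}(\mathcal{C})$ of Proposition~\ref{prop:2.3} and therefore equals $\mathcal{C}'$, so $S$ lies in the cone over $\mathcal{C}'$ with vertex $\mathbb{L}_\mathcal{C}$, which is ${\rm Join}(\mathbb{L}_\mathcal{C},\mathcal{C})$.
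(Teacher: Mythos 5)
Your proof is correct, but it takes a genuinely different route from the paper's at the key step. The paper establishes part (b) first, and it does so cohomologically: since $S$ is arithmetically Cohen--Macaulay, the sequence $0 \to \mathcal{I}_S \to \mathcal{I}_{\mathcal{C}} \to \mathcal{O}_S(-\mathcal{C}) \to 0$ gives $H^1(\mathbb{P}^r,\mathcal{I}_{\mathcal{C}}(1)) \cong H^1(S,\mathcal{O}_S((1-a)H-bF))$, which vanishes for $a>1$; this would make $\mathcal{C}$ linearly normal, contradicting the fact (from the Gruson--Lazarsfeld--Peskine description, Notation and Remarks \ref{nar:curveofmaxreg}(B)) that a curve of maximal regularity with $d \geq r+2$ is a proper projection of a rational normal curve. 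With $a=1$ in hand, the paper disposes of (a) in one line, since a ruling then meets $\mathcal{C}$ in a single point and so cannot be the multisecant $\mathbb{L}_{\mathcal{C}} \subset S$. You instead stay entirely inside the intersection theory of the smooth scroll: you classify the lines on $S$, rule out the ruling-line case by the numerical estimate $d \geq a\beta \geq (d-r+2)\beta$ coming from $\mathcal{C}\cdot C_0 \geq 0$, conclude $\mathbb{L}_{\mathcal{C}}=C_0$ and $\alpha=1$, and only then extract $a=1$ from the extremal secancy $\mathcal{C}\cdot C_0 \geq d-r+2$. Both arguments are sound. The paper's buys the divisor class of $\mathcal{C}$ without reference to the secant line at all, using only non-linear-normality plus the ACM property of $S$; yours is more elementary (no sheaf cohomology) and makes the extremal secant line do all the work, at the cost of a slightly more involved case analysis. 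Your expanded justification of (d), via the rulings joining $C_0$ to $\mathcal{C}$ and a density argument, is a correct filling-in of what the paper dismisses as clear.
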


\begin{proof}
Let $\mathcal{C}$ be linearly equivalent to $aH + bF$. Then $a \geq
1$ since $\mathcal{C}$ is irreducible and not a line. As the surface
$S$ is arithmetically Cohen-Macaulay, we have $H^i(\mathbb{P}^r,
\mathcal{I}_S(1)) = 0$ for $i=1,2$ and so, the short exact sequence
$$0 \longrightarrow \mathcal{I}_S \longrightarrow \mathcal{I}_{\mathcal{C}} \longrightarrow \mathcal{O}_S(-\mathcal{C}) \longrightarrow 0$$
implies an isomorphism $H^1(\mathbb{P}^r, \mathcal{I}_{\mathcal{C}}
(1)) \cong H^1 (S,\mathcal{O}_S ((1-a)H - bF))$. If $a >1$, we have
$H^1\big(\mathbb{P}^r, \mathcal{O}_S ((1-a)H - bF )\big) = 0$, and
we get the contradiction that $\mathcal{C} \subset \mathbb{P}^r$ is
linearly normal (cf. Notation and Remarks
\ref{nar:curveofmaxreg}(C)). Therefore $a=1$. As $d =
\deg(\mathcal{C}) = \deg(S) + b = r-1+b$, we obtain $b = d-r+1$ and
$\mathcal{C}$ is linearly equivalent to $H + (d-r+1)F$.

Now, we will see that $\alpha =1$. Note that $S$ is cut out by
quadrics and so $\mathbb{L}_{\mathcal{C}}$ should be contained in
$S$ (cf. Notation and Remarks \ref{nar:descriptionscrolls}(A)). If
$\alpha \geq 2$ then the only lines contained in $S$ are the ruling
lines. Obviously, no ruling line of $S$ can be a multi-secant line
to $\mathcal{C}$. Therefore $\alpha=1$.

The unique line section $\mathbb{L} = S(1)$ of $S = S(1,r-2)$
satisfies the condition
\begin{equation*}
\rm{length}(\mathcal{C} \cap \mathbb{L}) = \mathcal{C} \cdot
\mathbb{L} = (H +(d-r+1)F)\cdot (H-(r-2)F) = d-r+2,
\end{equation*}
and hence $\mathbb{L}$ is indeed the unique $(d-r+2)$-secant line to
$\mathcal{C}$. Now, it is clear that $S$ is contained in the
threefold scroll ${\rm Join}(S(1), \mathcal{C}) = {\rm
Join}(\mathbb{L}_{\mathcal{C}}, \mathcal{C})$.
\end{proof}

\section{The extremal secant locus of a projective variety}

\noindent In this section, we study the geometry of proper
$(d-c+1)$-secant lines to a nondegenerate irreducible projective
variety $X \subset \mathbb{P}^r$ of codimension $c$ and degree $d$.
To this aim, we will investigate the \textit{extremal secant locus}
$\Sigma (X)$ of $X$, that is, the closure of the set of all proper
$(d-c+1)$-secant lines of $X$ in the Grassmannian
$\mathbb{G}(1,\mathbb{P}^r)$.

To give precise statements, we require some notation and
definitions. We first fix a few notations, which we shall keep for
the rest of our paper.

\begin{notation and reminder}\label{4.1'' Notation and Reminder}
Let $X \subset \mathbb{P}^r$ be as above.
\begin{itemize}
\item[\rm{(A)}] Let $\Sigma_m(X)$ be the locus of all $m$-secant lines of $X$ in $\mathbb{G}(1,\mathbb{P}^r)$. That is,
$$\Sigma_m(X) := \{\mathbb{L} \in \mathbb{G}(1,\mathbb{P}^r) \mid \rm{length}(X \cap \mathbb{L}) \geq m\}.$$
This set is closed in $\mathbb{G}(1,\mathbb{P}^r)$. We also shall
use the notation
\begin{equation*}
\Sigma_{\infty}(X) :=   \{\mathbb{L} \in \mathbb{G}(1,\mathbb{P}^r)
\mid \mathbb{L} \subseteq X\}.
\end{equation*}
Thus we have the inclusion $\Sigma_{\infty}(X) \subseteq
\Sigma_m(X)$ and the equality holds if $X$ is cut out by forms of
degree $< m$. In particular, $\Sigma_{\infty}(X)$ is a closed
subset, too.
\item[\rm{(B)}] The set $\Sigma^{\circ}_m(X) := \Sigma_m(X) \setminus \Sigma_{\infty}(X)$
of all proper $m$-secant lines to $X$ is locally closed in
$\mathbb{G}(1,\mathbb{P}^r)$. We define $\mathfrak{d}_m(X)$ and
$\overline{\mathfrak{d}}_m (X)$ respectively as
\begin{equation*}
\mathfrak{d}_m (X):= \dim ~ \Sigma_{m}(X) ~ \mbox{ and }
~\overline{\mathfrak{d}}_m(X) := \dim ~
\overline{\Sigma^{\circ}_m(X)} ~ (= \dim  \Sigma^{\circ}_m(X) ).
\end{equation*}
\item[\rm{(C)}] By definition, the extremal secant locus $\Sigma (X)$ of $X$ is equal to $\overline{\Sigma^{\circ}_{d-c+1} (X)}$.
\item[\rm{(D)}] Recall that in the introduction we define $X$ to be a \textit{variety of maximal sectional regularity} if the general linear curve section of $X$ is of maximal regularity. To be precise, $X$ is of maximal sectional regularity if there exists a nonempty open subset $\mathcal{U} \subset \mathbb{G}(c+1, \mathbb{P}^r )$ such that
\smallskip

\begin{itemize}
\item[$(*)$] For any $\Lambda \in \mathcal{U}$, the intersection
\begin{equation*}
\mathcal{C}_{\Lambda} := X \cap \Lambda \quad \subset \quad \Lambda
= \mathbb{P}^{c+1}
\end{equation*}
is an integral curve of maximal regularity.
\end{itemize}
\smallskip

\noindent In this case, we will denote by $\mathcal{U}(X)$ the
largest open subset of $\mathbb{G}(c+1, \mathbb{P}^r )$ satisfying
the property $(*)$.
\end{itemize}
\end{notation and reminder}

Now, we are heading for the main result of this section. We begin
with the following auxiliary result.

\begin{lemma}\label{2.2'' Lemma}
Let $T$ be an integral closed subset of $\mathbb{G}(1,\mathbb{P}^r)$
and let $\mathbb{H} = \mathbb{P}^{r-1} \subset \mathbb{P}^r$ be a
general hyperplane. Then, the following statements hold.
\begin{itemize}
\item[\rm{(a)}] If $\dim T \leq 1$, then $T \cap \mathbb{G}(1,\mathbb{H}) = \emptyset$.
\item[\rm{(b)}] If $\dim T \geq 2$, then each irreducible component $W$ of $T \cap \mathbb{G}(1,\mathbb{H})$ satisfies
                   $$\dim W = \dim T  - 2.$$
\end{itemize}
\end{lemma}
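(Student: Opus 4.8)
The plan is to analyze the incidence correspondence between lines in $T$ and hyperplanes containing them, then apply a standard dimension count to a general fiber. First I would set up the incidence variety
\begin{equation*}
\mathcal{I} := \{(\mathbb{L}, \mathbb{H}) \in T \times (\mathbb{P}^r)^\vee \mid \mathbb{L} \subset \mathbb{H}\},
\end{equation*}
with its two projections $p : \mathcal{I} \to T$ and $q : \mathcal{I} \to (\mathbb{P}^r)^\vee$. The fiber of $p$ over a point $\mathbb{L} \in \mathbb{G}(1,\mathbb{P}^r)$ is the set of hyperplanes containing the line $\mathbb{L}$, which is a linear subspace of $(\mathbb{P}^r)^\vee$ of dimension $r-2$ (the hyperplanes through a fixed line form a $\mathbb{P}^{r-2}$). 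Hence $p$ is a projective bundle over $T$ with fibers $\mathbb{P}^{r-2}$, so $\mathcal{I}$ is irreducible of dimension $\dim T + (r-2)$. Note $T \cap \mathbb{G}(1,\mathbb{H})$ is precisely $p(q^{-1}(\mathbb{H}))$, i.e. the set of lines in $T$ that lie in $\mathbb{H}$.

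Next I would study the second projection $q$. For statement (a): if $\dim T \leq 1$, then $\dim \mathcal{I} \leq (r-1) + (r-2) < r$ only when... — actually $\dim \mathcal{I} = \dim T + r - 2 \leq r-1 < r = \dim (\mathbb{P}^r)^\vee$ precisely when $\dim T \leq 1$. Since $\dim \mathcal{I} < \dim (\mathbb{P}^r)^\vee$, the image $q(\mathcal{I})$ is a proper closed subset of $(\mathbb{P}^r)^\vee$, so a general hyperplane $\mathbb{H}$ lies outside $q(\mathcal{I})$, which means no line of $T$ is contained in $\mathbb{H}$; that is, $T \cap \mathbb{G}(1,\mathbb{H}) = \emptyset$. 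For statement (b): if $\dim T \geq 2$, I must first argue that $q$ is dominant (hence surjective, as everything is projective). This follows because through any fixed hyperplane $\mathbb{H}$ there pass lines filling up $\mathbb{H}$, and since $\dim T \geq 2$ one expects $T$ to meet $\mathbb{G}(1,\mathbb{H})$; more carefully, one checks $q$ cannot have image of dimension $< r$ by a fiber-dimension estimate, or one invokes that a general line moves in a family of dimension $2r-2 \geq \dim T$ forcing generic surjectivity. Once $q$ is known to be surjective onto the irreducible $(\mathbb{P}^r)^\vee$, generic fiber dimension gives $\dim q^{-1}(\mathbb{H}) = \dim \mathcal{I} - r = \dim T - 2$ for general $\mathbb{H}$. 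Finally, since $p$ restricted to $q^{-1}(\mathbb{H})$ is injective (a line determines the pair once $\mathbb{H}$ is fixed), $\dim p(q^{-1}(\mathbb{H})) = \dim q^{-1}(\mathbb{H})$, and every component $W$ of $T \cap \mathbb{G}(1,\mathbb{H})$ arises this way, giving $\dim W = \dim T - 2$. The refinement that \emph{every} component — not just the general one — has this exact dimension uses the generic-smoothness / generic-flatness of $q$ over a suitable dense open subset of $(\mathbb{P}^r)^\vee$ together with equidimensionality of fibers of a flat morphism between varieties.

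The main obstacle I anticipate is the lower bound in part (b): showing that every irreducible component $W$ of the intersection has dimension \emph{equal to} $\dim T - 2$, rather than merely $\leq$ or $\geq$. The upper bound $\dim W \le \dim T - 2$ for general $\mathbb{H}$ is the soft direction and follows from $q$ being generically finite-to-nothing... no — follows from upper semicontinuity of fiber dimension once the generic fiber dimension is pinned down; the subtle point is ruling out "extra" higher-dimensional components of $q^{-1}(\mathbb{H})$ for the chosen general $\mathbb{H}$. The clean way to handle this is to pass to the open locus $U \subset (\mathbb{P}^r)^\vee$ over which $q$ is flat (which exists by generic flatness since $\mathcal{I}$ and $(\mathbb{P}^r)^\vee$ are varieties and the base is reduced), note $U$ is dense open, and recall that all fibers of a flat morphism of finite type between Noetherian schemes have pure dimension equal to $\dim \mathcal{I} - \dim (\mathbb{P}^r)^\vee = \dim T - 2$; intersecting with the (dense open) locus where the general-hyperplane conditions of part (a) and of Bertini-type statements already in use hold finishes the argument. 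The lower bound $\dim W \geq \dim T - 2$ can alternatively be seen directly: $W$ is cut out in $T$ by the Schubert-type condition "$\mathbb{L} \subset \mathbb{H}$", which is locally the vanishing of $2$ equations (a line lies in a hyperplane iff its two defining Plücker-linear conditions hold), so every component of the intersection has codimension $\leq 2$ in $T$ by Krull's principal ideal theorem.
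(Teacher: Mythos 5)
Your incidence-correspondence setup is a genuinely different and more elementary route than the paper's, which disposes of both parts in one stroke by applying Kleiman's transversality theorem \cite[Corollary 4]{K} to the transitive action of $\mathrm{Aut}(\mathbb{P}^r)$ on $\mathbb{G}(1,\mathbb{P}^r)$: for a general translate of $\mathbb{G}(1,\mathbb{H}_0)$, every component of its intersection with $T$ has dimension $\dim T + (2r-4) - (2r-2) = \dim T - 2$, the intersection being empty when this is negative. Your part (a) is correct and complete, and two of your supporting observations for (b) are sound: the identification of $T \cap \mathbb{G}(1,\mathbb{H})$ with the fiber $q^{-1}(\mathbb{H})$, and the Krull-type lower bound $\dim W \geq \dim T - 2$ coming from the fact that $\mathbb{G}(1,\mathbb{H})$ is locally cut out by two equations in $\mathbb{G}(1,\mathbb{P}^r)$ (this last point is a genuine addition, since it holds for \emph{every} hyperplane).

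The genuine gap is your claim that $q$ is dominant whenever $\dim T \geq 2$; this is false, and none of the three justifications you sketch can be repaired. Take $T$ to be the family of lines through a fixed point $p_0 \in \mathbb{P}^r$, an integral closed subset of $\mathbb{G}(1,\mathbb{P}^r)$ of dimension $r-1 \geq 2$. Then $q(\mathcal{I})$ is the hyperplane $\{\mathbb{H} \mid p_0 \in \mathbb{H}\}$ of $(\mathbb{P}^r)^\vee$, and a general hyperplane contains no line of $T$ at all; here every nonempty fiber of $q$ has dimension $r-2$, which is strictly larger than $\dim T - 2 = r-3$, so no fiber-dimension estimate can force dominance. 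The lemma survives this example only because statement (b) is read as vacuous when the intersection is empty --- exactly as in Kleiman's theorem, where the conclusion is ``empty or of pure dimension $\dim T - 2$.'' The repair inside your framework is immediate and you should make the dichotomy explicit: if $q$ is not dominant, a general $\mathbb{H}$ misses the closed set $q(\mathcal{I})$ and (b) holds vacuously; if $q$ is dominant (hence surjective by properness), then your generic-flatness argument works, or more simply one applies upper semicontinuity of fiber dimension to the closed locus $Z := \{z \in \mathcal{I} \mid \dim_z q^{-1}(q(z)) \geq \dim T - 1\}$, whose image in $(\mathbb{P}^r)^\vee$ has dimension at most $r-2$, so that a general $\mathbb{H}$ avoids $q(Z)$ and every component of $q^{-1}(\mathbb{H})$ has dimension exactly $\dim T - 2$. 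With that case distinction inserted, your proof is correct.
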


\begin{proof}
Fix a hyperplane $\mathbb{H}_0 \subset \mathbb{P}^r$. The canonical
action of the integral algebraic group scheme $G =: {\rm
Aut}(\mathbb{P}^r)$ on the integral scheme $X =
\mathbb{G}(1,\mathbb{P}^r)$ is transitive. Thus a result of Kleiman
(see \cite[Corollary 4]{K}) says that all irreducible components of
$g(\mathbb{G}(1,\mathbb{H}_0)) \cap T$ have dimension
$$\dim \mathbb{G}(1,\mathbb{H}_0) +\dim T -\dim
\mathbb{G}(1,\mathbb{P}^r)= \dim T  - 2$$ for general $g \in G$.
\end{proof}

\begin{proposition}\label{prop:dimensionextremal}
Let $X \subset \mathbb{P}^r$ be a nondegenerate irreducible
projective variety of dimension $n$, codimension $c \geq 3$  and
degree $d \geq c+3$. Then
\begin{itemize}
\item[\rm{(a)}] $\mathfrak{d}_{\infty}(X) \leq 2n-3$.
\item[\rm{(b)}] $\mathfrak{d}_{d-c+1} (X) \leq 2n-2$ and the equality is attained if and only if
$X$ is a variety of maximal sectional regularity.
\end{itemize}
\end{proposition}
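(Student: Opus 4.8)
The statement splits naturally, and I would treat (a) and (b) by different methods. First a reduction: since $\Sigma_{\infty}(X)$ is closed and $\Sigma_{d-c+1}(X)=\Sigma_{\infty}(X)\cup\Sigma^{\circ}_{d-c+1}(X)$ as sets, one has $\Sigma_{d-c+1}(X)=\Sigma_{\infty}(X)\cup\overline{\Sigma^{\circ}_{d-c+1}(X)}$, hence $\mathfrak{d}_{d-c+1}(X)=\max\{\mathfrak{d}_{\infty}(X),\overline{\mathfrak{d}}_{d-c+1}(X)\}$; so once (a) gives $\mathfrak{d}_{\infty}(X)\le 2n-3$, the extremal value $2n-2$ in (b) can only be contributed by proper secant lines. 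For (a) I would run the classical Fano-variety-of-lines count on $\mathcal{I}=\{(x,\mathbb{L}):x\in\mathbb{L}\subseteq X\}$: the first projection has $\mathbb{P}^{1}$-fibres, so $\dim\mathcal{I}=\mathfrak{d}_{\infty}(X)+1$, while the second has image the union $Z$ of all lines on $X$, say of dimension $e\le n$, and over a general $z\in Z$ the lines on $X$ through $z$ sweep out a cone $C_{z}\subseteq Z$ on which a point $\neq z$ determines its line, so the fibre has dimension $\dim C_{z}-1\le e-1$. Thus $\mathfrak{d}_{\infty}(X)\le 2e-2\le 2n-2$, and if equality held then $e=n$, $Z=X$ and $C_{z}=X$ for a dense set of $z$, i.e. $X$ would be a cone with vertex at a general point; as the vertex of $X$ is a closed linear subspace it would then be all of $X$, so $X=\mathbb{P}^{n}$, contradicting $c\ge 3$. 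Hence $\mathfrak{d}_{\infty}(X)\le 2n-3$.

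For (b) I would induct on $n$. The base case $n=1$ is precisely the curve case: here $r=c+1$, $d\ge r+2$ and $d-c+1=d-r+2$, so by Notation and Remarks \ref{nar:curveofmaxreg}(A), $X$ has a $(d-r+2)$-secant line if and only if it is of maximal regularity, and then \ref{nar:curveofmaxreg}(C) makes that line unique; since a nondegenerate integral curve of degree $\ge 2$ contains no line, this gives $\mathfrak{d}_{d-c+1}(X)\le 0=2n-2$ with equality exactly when $X$ is of maximal sectional regularity. For $n\ge 2$ let $\mathbb{H}=\mathbb{P}^{r-1}$ be a general hyperplane; by Bertini $X':=X\cap\mathbb{H}$ is a nondegenerate integral variety of dimension $n-1$, codimension $c$ and degree $d\ge c+3$, so the inductive hypothesis applies to it. Two observations feed the induction: for $\mathbb{L}\subseteq\mathbb{H}$ one has $X\cap\mathbb{L}=X'\cap\mathbb{L}$ as schemes, whence $\Sigma_{d-c+1}(X)\cap\mathbb{G}(1,\mathbb{H})=\Sigma_{d-c+1}(X')$; and a general $(c+1)$-plane sits in a general hyperplane and conversely, so the general linear curve sections of $X$ and of $X'$ coincide, and therefore $X$ is of maximal sectional regularity if and only if $X'$ is.

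Now let $V$ be an irreducible component of $\Sigma_{d-c+1}(X)$ with $\dim V\ge 2$. If $V\cap\mathbb{G}(1,\mathbb{H})\neq\emptyset$, then by Lemma \ref{2.2'' Lemma}(b) each of its components has dimension $\dim V-2$, and since $V\cap\mathbb{G}(1,\mathbb{H})\subseteq\Sigma_{d-c+1}(X')$ the inductive hypothesis $\mathfrak{d}_{d-c+1}(X')\le 2n-4$ gives $\dim V\le 2n-2$. If instead $V\cap\mathbb{G}(1,\mathbb{H})=\emptyset$ for the general $\mathbb{H}$, I would use the $\mathbb{P}^{r-2}$-bundle $\mathcal{N}_{V}=\{(\mathbb{L},\mathbb{H}):\mathbb{L}\in V,\ \mathbb{L}\subseteq\mathbb{H}\}$ over $V$: its projection to $\check{\mathbb{P}}^{r}$ is then non-dominant, so over a general point of its image the fibre $V\cap\mathbb{G}(1,\mathbb{H})$ has dimension $\ge\dim V-1$; since a general hyperplane through a general line of $V$ is still general enough that $X\cap\mathbb{H}$ is nondegenerate and integral of dimension $n-1$, the inductive hypothesis bounds this fibre by $2n-4$, forcing $\dim V\le 2n-3$. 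In all cases $\dim V\le 2n-2$, so $\mathfrak{d}_{d-c+1}(X)\le 2n-2$. Moreover, if $\mathfrak{d}_{d-c+1}(X)=2n-2$, a component of that dimension must be of the first type, so $V\cap\mathbb{G}(1,\mathbb{H})$ has a component of dimension $2n-4$; then $\mathfrak{d}_{d-c+1}(X')=2n-4$, whence $X'$, and so $X$, is of maximal sectional regularity. Conversely, if $X$ is of maximal sectional regularity then so is $X'$ and $\mathfrak{d}_{d-c+1}(X')=2n-4$; passing to $\mathcal{N}=\{(\mathbb{L},\mathbb{H}):\mathbb{L}\in\Sigma_{d-c+1}(X),\ \mathbb{L}\subseteq\mathbb{H}\}$, whose fibre over a general $\mathbb{H}$ is $\Sigma_{d-c+1}(X')$ (nonempty of dimension $2n-4$), some component of $\mathcal{N}$ — a $\mathbb{P}^{r-2}$-bundle over a component $V$ of $\Sigma_{d-c+1}(X)$ — dominates $\check{\mathbb{P}}^{r}$ with general fibre of dimension $\ge 2n-4$, giving $\dim V-2\ge 2n-4$, i.e. $\mathfrak{d}_{d-c+1}(X)\ge 2n-2$.

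The step I expect to be the main obstacle is the case in the induction where a high-dimensional component $V$ of $\Sigma_{d-c+1}(X)$ meets no general hyperplane's Grassmannian — i.e. $V$ parametrizes lines that never lie on a general hyperplane — because Lemma \ref{2.2'' Lemma} is then silent; handling it needs the auxiliary incidence with $\check{\mathbb{P}}^{r}$ above together with the (standard, but in positive characteristic somewhat delicate) fact that a general hyperplane through a fixed line still cuts $X$ in a nondegenerate integral variety of dimension $n-1$. A secondary technical point is the basic Bertini statement that the general $X\cap\mathbb{H}$ is a nondegenerate integral variety of dimension $n-1$, codimension $c$ and degree $d$, which underlies both the induction and the identification of maximal sectional regularity of $X$ with that of $X\cap\mathbb{H}$.
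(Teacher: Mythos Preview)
Your argument is correct, and for part (b) it is essentially the paper's own method unwound as an explicit induction: both rest on the identity $\Sigma_m(X)\cap\mathbb{G}(1,\mathbb{H})=\Sigma_m(X\cap\mathbb{H})$ together with Lemma~\ref{2.2'' Lemma}. The paper simply cuts by $n-1$ general hyperplanes at once and records the two facts
\[
\Sigma_m(\mathcal{C})=\emptyset \iff \mathfrak{d}_m(X)\le 2n-3,
\qquad
\mathfrak{d}_m(\mathcal{C})\ge 0 \iff \mathfrak{d}_m(X)\ge 2n-2,
\]
then reads off (a) from $\Sigma_\infty(\mathcal{C})=\emptyset$ and (b) from $\mathfrak{d}_{d-c+1}(\mathcal{C})\le 0$. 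Your Case~2 is a genuine observation: Lemma~\ref{2.2'' Lemma}(b) as stated does not assert nonemptiness (the Kleiman corollary only bounds component dimensions), and there do exist irreducible $V\subset\mathbb{G}(1,\mathbb{P}^r)$ of dimension $\ge 2$ with $V\cap\mathbb{G}(1,\mathbb{H})=\emptyset$ for general $\mathbb{H}$ (lines through a fixed point). So the paper is tacitly using something extra, and your incidence-with-$\check{\mathbb{P}}^r$ argument supplies it. Your fix is sound: for a general $\mathbb{L}\in V$ one has $\mathbb{L}\not\subset X$, so $X\cap\mathbb{L}$ is $0$-dimensional, the projection $\pi_{\mathbb{L}}:X\dashrightarrow\mathbb{P}^{r-2}$ has image of dimension $n\ge 2$, and the standard irreducibility Bertini theorem for linear systems with image of dimension $\ge 2$ gives that a general hyperplane through $\mathbb{L}$ cuts $X$ in an integral variety; nondegeneracy in $\mathbb{H}$ holds for \emph{every} hyperplane since $H^1(\mathcal{I}_X)=0$.

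For part (a) you take a genuinely different route. The paper treats (a) and (b) uniformly via the curve section, using only that $\Sigma_\infty(\mathcal{C})=\emptyset$; this is shorter and avoids any separate geometry of the Fano variety of lines. Your direct incidence argument on $\{(x,\mathbb{L}):x\in\mathbb{L}\subset X\}$ is correct (it should be run component-by-component on $\Sigma_\infty(X)$, but this changes nothing), and has the small advantage of being self-contained---it does not need Lemma~\ref{2.2'' Lemma} or the secant identity. Either approach works; the paper's buys uniformity, yours buys independence from the hyperplane-section machinery.
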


\begin{proof}
For any hyperplane $\mathbb{H}$ of $\mathbb{P}^r$ and any $m \in
\mathbb{N} \cup \{ \infty \}$, it holds that
\begin{equation}\label{eq:hyperplanesection}
\Sigma_m (X \cap \mathbb{H}) = \Sigma_m (X) \cap \mathbb{G} (1,
\mathbb{H}).
\end{equation}
Now, let $\mathbb{H}_1 , \ldots , \mathbb{H}_{n-1}$ be general
hyperplanes and let $\mathcal{C}:= X \cap \mathbb{H}_1 \cap \cdots
\cap \mathbb{H}_{n-1}$. Then by combining Lemma \ref{2.2'' Lemma}
and (\ref{eq:hyperplanesection}), one can see that
\begin{itemize}
\item[(i)] $\Sigma_m (\mathcal{C})$ is empty if and only if $\mathfrak{d}_m \leq 2n-3$
\end{itemize}
and
\begin{itemize}
\item[(ii)] $\mathfrak{d}_m (\mathcal{C}) \geq 0$ if and only if $\mathfrak{d}_m (X) \geq 2n-2$. In this case, it holds that
    \begin{equation*}
    \mathfrak{d}_m (\mathcal{C}) =  \mathfrak{d}_m (X) - (2n-2).
    \end{equation*}
\end{itemize}

\noindent (a): Obviously, $\Sigma_\infty (\mathcal{C})$ is empty.
Thus we have $\mathfrak{d}_{\infty} (X) \leq 2n-3$ by (i).

\noindent (b): We know that $\mathcal{C}$ has at most one
$(d-c+1)$-secant line and so $\mathfrak{d}_{d-c+1} (\mathcal{C})
\leq 0$ (cf. Notation and Remarks \ref{nar:curveofmaxreg}.(B)). It
follows by (i) and (ii) that $\mathfrak{d}_{d-c+1} (X) \leq  2n-2$.
Moreover, $\mathfrak{d}_{d-c+1} (X) =  2n-2$ if and only if
$\mathfrak{d}_{d-c+1} (\mathcal{C}) = 0$ and hence $\mathcal{C}$ is
a curve of maximal regularity.
\end{proof}

\begin{theorem}\label{thm:maximaldimension}
Let $X \subset \mathbb{P}^r$ be as in Proposition
\ref{prop:dimensionextremal}. Then
$\overline{\mathfrak{d}}_{d-c+1}(X) \leq 2n-2$ and equality is
attained if and only if $X$ is a variety of maximal sectional
regularity.
\end{theorem}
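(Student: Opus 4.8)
The plan is to deduce Theorem~\ref{thm:maximaldimension} from Proposition~\ref{prop:dimensionextremal} by comparing the invariants $\mathfrak{d}_{d-c+1}(X)$ and $\overline{\mathfrak{d}}_{d-c+1}(X)$. Recall that by Notation and Reminder~\ref{4.1'' Notation and Reminder}(B)--(C) we have $\Sigma(X)=\overline{\Sigma^{\circ}_{d-c+1}(X)}$ where $\Sigma^{\circ}_{d-c+1}(X)=\Sigma_{d-c+1}(X)\setminus\Sigma_{\infty}(X)$, so that $\overline{\mathfrak{d}}_{d-c+1}(X)=\dim\Sigma^{\circ}_{d-c+1}(X)$, whereas $\mathfrak{d}_{d-c+1}(X)=\dim\Sigma_{d-c+1}(X)$. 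Since $\Sigma^{\circ}_{d-c+1}(X)\subseteq\Sigma_{d-c+1}(X)$, we trivially get $\overline{\mathfrak{d}}_{d-c+1}(X)\le\mathfrak{d}_{d-c+1}(X)\le 2n-2$, the last inequality being Proposition~\ref{prop:dimensionextremal}(b). This settles the upper bound, and it also shows that if $\overline{\mathfrak{d}}_{d-c+1}(X)=2n-2$ then $\mathfrak{d}_{d-c+1}(X)=2n-2$, whence $X$ is of maximal sectional regularity by Proposition~\ref{prop:dimensionextremal}(b) again. So the one remaining implication is: if $X$ is a variety of maximal sectional regularity, then $\overline{\mathfrak{d}}_{d-c+1}(X)=2n-2$.

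For this converse, assume $X$ is of maximal sectional regularity, so $\mathfrak{d}_{d-c+1}(X)=2n-2$ by Proposition~\ref{prop:dimensionextremal}(b). It suffices to show that the closed set $\Sigma_{d-c+1}(X)$ is not entirely swallowed by $\Sigma_{\infty}(X)$ on some $(2n-2)$-dimensional component; equivalently, that some irreducible component of $\Sigma_{d-c+1}(X)$ of dimension $2n-2$ is not contained in $\Sigma_{\infty}(X)$. By Proposition~\ref{prop:dimensionextremal}(a) we have $\mathfrak{d}_{\infty}(X)\le 2n-3$, i.e. $\dim\Sigma_{\infty}(X)\le 2n-3$. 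Hence no irreducible component of $\Sigma_{d-c+1}(X)$ of dimension $2n-2$ can lie inside $\Sigma_{\infty}(X)$, for that would force $\dim\Sigma_{\infty}(X)\ge 2n-2$. Therefore at least one $(2n-2)$-dimensional component $W$ of $\Sigma_{d-c+1}(X)$ meets the open complement $\Sigma_{d-c+1}(X)\setminus\Sigma_{\infty}(X)=\Sigma^{\circ}_{d-c+1}(X)$ in a dense open subset of $W$, so $\dim\Sigma^{\circ}_{d-c+1}(X)\ge\dim W=2n-2$. Combined with the upper bound $\overline{\mathfrak{d}}_{d-c+1}(X)\le\mathfrak{d}_{d-c+1}(X)=2n-2$ this gives $\overline{\mathfrak{d}}_{d-c+1}(X)=2n-2$, as required.

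I do not expect a serious obstacle here: the whole argument is a dimension-counting bookkeeping exercise built on the two bounds already established in Proposition~\ref{prop:dimensionextremal}, the only genuine input being part~(a) of that proposition, which is precisely what keeps $\Sigma_{\infty}(X)$ from absorbing a top-dimensional component of $\Sigma_{d-c+1}(X)$. The one point that deserves a careful sentence is the passage from ``a top-dimensional component $W$ is not contained in the closed set $\Sigma_{\infty}(X)$'' to ``$W\cap\Sigma^{\circ}_{d-c+1}(X)$ is dense in $W$'': this holds because $W$ is irreducible and $W\cap\Sigma_{\infty}(X)$ is a proper closed subset of $W$, so its complement in $W$ is open, dense, and of the same dimension $2n-2$; taking closures then yields $\overline{\mathfrak{d}}_{d-c+1}(X)\ge 2n-2$.
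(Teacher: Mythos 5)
Your proposal is correct and follows essentially the same route as the paper: both deduce the upper bound from $\overline{\mathfrak{d}}_{d-c+1}(X)\le\mathfrak{d}_{d-c+1}(X)$ together with Proposition \ref{prop:dimensionextremal}(b), and both use Proposition \ref{prop:dimensionextremal}(a) to see that $\Sigma_{\infty}(X)$ cannot account for a $(2n-2)$-dimensional component of $\Sigma_{d-c+1}(X)$, so that $\mathfrak{d}_{d-c+1}(X)=2n-2$ forces $\overline{\mathfrak{d}}_{d-c+1}(X)=2n-2$. The only difference is that you spell out the irreducible-component and density argument which the paper leaves implicit.
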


\begin{proof}
Since it holds always that $\mathfrak{d}_{d-c+1} (X) \geq
\overline{\mathfrak{d}}_{d-c+1} (X)$, we get the desired inequality
$\overline{\mathfrak{d}}_{d-c+1} (X) \leq 2n-2$ from Proposition
\ref{prop:dimensionextremal}(b). Also since $\mathfrak{d}_{\infty}
(X) \leq 2n-3$ by Proposition \ref{prop:dimensionextremal}(a), it
holds that $\mathfrak{d}_{d-c+1} (X) = 2n-2$ if and only if
$\overline{\mathfrak{d}}_{d-c+1} (X) = 2n-2$.
\end{proof}

\section{Sectionally Rational Varieties}

\noindent Let $X \subset \mathbb{P}^r$ be a nondegenerate
irreducible projective variety. We will say that $X$ is a
\textit{sectionally rational variety} (resp. \textit{sectionally
smooth rational variety}) if its general linear curve section is
rational (resp. smooth rational). We are interested in this kind of
varieties since any variety of maximal sectional regularity is
sectionally smooth rational (cf. Notation and Remarks
\ref{nar:curveofmaxreg}(A)). The aim of this section is to show --
under some mild conditions -- that a sectionally rational variety is
always obtained as a birational linear projection of a variety of
minimal degree.

\begin{theorem}\label{thm:sectionallyrational}
Let $X \subset \mathbb{P}^r$ be a nondegenerate irreducible
projective variety of dimension $n$ and degree $d$. Assume that
either
\begin{itemize}
\item[\rm{(1)}] $\rm{char}(\Bbbk) = 0$ and $X$ is a sectionally rational variety, or else
\item[\rm{(2)}] $X$ is a sectionally smooth rational surface.
\end{itemize}
Then $X$ is a projection of a variety of minimal degree. More
precisely, $X = \pi_{\Lambda} (\widetilde{X})$ where
\begin{itemize}
\item[\rm{(a)}] $\widetilde{X} \subset \mathbb{P}^{d+n-1}$ is an $n$-dimensional variety of minimal degree,
\item[\rm{(b)}] $\Lambda = \mathbb{P}^{d+n-r-2} \subset \mathbb{P}^{d+n-1}$ is a subspace such that $\widetilde{X} \cap \Lambda = \emptyset$,
\item[\rm{(c)}] $\pi_{\Lambda} : \mathbb{P}^{d+n-1} \setminus \Lambda \rightarrow \mathbb{P}^r$ is the linear projection map from $\Lambda$ and
\item[\rm{(d)}] $\pi_{\Lambda} \upharpoonright : \widetilde{X} \rightarrow X$ is the normalization of $X$.
\end{itemize}
Furthermore, $X$ is a sectionally smooth rational variety if and
only if the singular locus
\begin{equation*}
\rm{Sing}(\pi_{\Lambda}):= \{ x \in X ~|~ \rm{length}(\pi_{\Lambda}
^{-1} (x)) \geq 2 \}
\end{equation*}
of $\pi_{\Lambda} : \widetilde{X} \rightarrow X$ has dimension at
most $n-2$.
\end{theorem}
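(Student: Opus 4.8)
The plan is to produce $\widetilde X$ as the image of the normalization of $X$ under a complete linear system, and to read off all the asserted properties from standard facts about adjunction of general hyperplane sections. Let $\nu : \overline X \to X \subset \mathbb P^r$ be the normalization, set $\mathcal L := \nu^* \mathcal O_{\mathbb P^r}(1)$, a very ample line bundle on $\overline X$ with $\deg$-many sections pulling back from $\mathbb P^r$, and consider the complete linear system $|\mathcal L|$. First I would show $h^0(\overline X, \mathcal L) = d + n$, so that $|\mathcal L|$ embeds (a priori maps) $\overline X$ into $\mathbb P^{d+n-1}$. The point is that a general curve section $\mathcal C$ of $X$ has normalization $\overline{\mathcal C}$ which, by hypothesis (1) or (2), is $\mathbb P^1$ (in case (1) rational, in case (2) smooth rational), and on $\mathbb P^1$ the relevant invertible sheaf is $\mathcal O_{\mathbb P^1}(d)$, which is nonspecial with $h^0 = d+1$. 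Cutting $\overline X$ by $n-1$ general hyperplanes and using the exact sequences $0 \to \mathcal L(-H) \to \mathcal L \to \mathcal L|_{H} \to 0$ together with vanishing of the intermediate $H^1$'s (here is where sectional rationality, i.e.\ $h^1(\overline{\mathcal C}, \mathcal O_{\mathbb P^1}(d)) = 0$, and upper-semicontinuity feed in) yields $h^0(\overline X,\mathcal L) = h^0(\overline{\mathcal C}, \mathcal O_{\mathbb P^1}(d)) + (n-1) \cdot 1 = d + n$ and also $h^1(\overline X, \mathcal L)=0$. Let $\widetilde X \subset \mathbb P^{d+n-1}$ be the image; it is nondegenerate of dimension $n$ and degree $\deg \mathcal L^n = d$, so it has minimal degree $= \operatorname{codim} + 1$, giving (a). Since varieties of minimal degree are projectively normal and in particular the map $\overline X \to \widetilde X$ is given by a complete, very ample (minimal-degree varieties are smooth or cones, and a normal one mapping finitely is an embedding — one checks $|\mathcal L|$ separates points and tangents on $\overline X$ because it does so on a general curve section and one propagates this up) system, $\overline X \cong \widetilde X$.

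Next I would recover the projection structure. The $(d+n)$-dimensional space $H^0(\overline X, \mathcal L)$ contains the $(r+1)$-dimensional subspace $V$ of sections pulled back from $H^0(\mathbb P^r, \mathcal O(1))$ (these separate the points of $X$ by definition of $\nu$, and $V$ has dimension exactly $r+1$ because $X$ is nondegenerate). Choosing a complement identifies the inclusion $V \hookrightarrow H^0(\overline X,\mathcal L)$ with a linear projection $\pi_\Lambda : \mathbb P^{d+n-1} \dashrightarrow \mathbb P^r$ from a center $\Lambda = \mathbb P^{(d+n-1)-(r+1)} = \mathbb P^{d+n-r-2}$, and the universal property of the two linear systems gives $\pi_\Lambda|_{\widetilde X} = \nu$ under $\overline X \cong \widetilde X$; since $\nu$ is a morphism, $\widetilde X \cap \Lambda = \emptyset$, which is (b), (c), (d). Thus $X = \pi_\Lambda(\widetilde X)$ with $\pi_\Lambda|_{\widetilde X}$ the normalization, as claimed.

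For the last assertion, observe that $\nu = \pi_\Lambda|_{\widetilde X}$ is finite and birational, $\widetilde X$ is smooth in codimension one (it is normal of minimal degree, hence smooth or a cone over a smooth variety — in any case $\operatorname{Sing}(\widetilde X)$ has codimension $\geq 2$ if $n \geq 2$; for $n = 1$ it is a rational normal curve, smooth), so the locus $\operatorname{Sing}(\pi_\Lambda) = \{x \in X : \operatorname{length}(\nu^{-1}(x)) \geq 2\}$ is exactly the non-normal locus of $X$ together with possibly the image of $\operatorname{Sing}(\widetilde X)$; in any event it is a closed subset and its inverse image in $\widetilde X \cong \mathbb P^1$-fibered-style variety is where the general curve section can pick up singularities. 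The equivalence to be proved is: $X$ sectionally smooth rational $\iff \dim \operatorname{Sing}(\pi_\Lambda) \leq n-2$. For ($\Leftarrow$): if $\dim \operatorname{Sing}(\pi_\Lambda) \leq n-2$, then a general linear $\mathbb P^{r-n+1}$ meeting $X$ in the curve section $\mathcal C$ avoids $\operatorname{Sing}(\pi_\Lambda)$, so $\nu$ restricted over $\mathcal C$ is an isomorphism onto $\mathcal C$, whence $\mathcal C$ is isomorphic to its normalization, which is the corresponding (general, hence smooth) curve section of $\widetilde X$; a general curve section of a minimal-degree (in particular smooth-in-codim-$\geq 2$, rational) variety is a smooth rational curve, so $\mathcal C$ is smooth rational. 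For ($\Rightarrow$): if $\dim \operatorname{Sing}(\pi_\Lambda) \geq n-1$, then a general curve section $\mathcal C$ meets $\operatorname{Sing}(\pi_\Lambda)$ (a subset of dimension $\geq n-1$ in the $n$-fold $X$) in a nonempty finite set of points at which $\nu$ is not a local isomorphism; at such a point $\mathcal C$ is singular (since $\overline{\mathcal C} \to \mathcal C$ is not an isomorphism there — one must check the general section sees the non-$1$-to-$1$ behavior, which follows because over a general point of $\operatorname{Sing}(\pi_\Lambda)$ the fiber has length $\geq 2$ and a general curve through it inherits this), so $\mathcal C$ is not smooth. The main obstacle I anticipate is precisely this transversality/genericity bookkeeping in the final equivalence: making rigorous that (i) a general curve section avoids a subvariety of dimension $\leq n-2$ but must hit one of dimension $\geq n-1$, and (ii) the non-isomorphism locus of $\nu$ is controlled exactly by $\operatorname{Sing}(\pi_\Lambda)$ and survives restriction to a general curve section, i.e.\ that $\operatorname{Sing}(\pi_\Lambda)$ is really the right locus and the generic behavior of $\nu$ along it is transverse to the general curve section — this requires a careful dimension count for the incidence variety of curve sections through points of $\operatorname{Sing}(\pi_\Lambda)$, plus the observation that a minimal-degree variety has smooth general curve section (e.g.\ via Bertini, since it is smooth in codimension $\geq 2$, or directly since rational normal scrolls, Veronese, and their cones have this property).
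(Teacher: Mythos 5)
Your overall strategy coincides with the paper's: normalize $X$, show that the pullback $\mathcal L$ of $\mathcal O_X(1)$ embeds the normalization into $\mathbb P^{d+n-1}$ as a variety of minimal degree, recover $X$ as a linear projection from a $(d+n-r-2)$-plane, and detect sectional smoothness via the non-normal locus of $X$. However, two of your key steps have genuine gaps, and they occur precisely where the paper brings in its two external inputs.

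First, you identify the general curve section $Y_1$ of the \emph{normalization} $\overline X$ with $\mathbb P^1$ by appealing to the normalization $\overline{\mathcal C}$ of the general curve section $\mathcal C$ of $X$. These are not a priori the same curve: $Y_1=\nu^{-1}(\mathcal C)$ maps finitely and birationally onto $\mathcal C$, but nothing yet says $Y_1$ is normal, and if it is not, then $\mathcal L|_{Y_1}$ is not $\mathcal O_{\mathbb P^1}(d)$ and your count $h^0=d+1$, $h^1=0$ on the bottom rung fails. The paper closes this by Bertini's theorem on normality of general members of a basepoint-free system on a normal variety (\cite[Corollary 3.4.2]{FlOV}), which is exactly why hypothesis (1) carries the assumption $\mathrm{char}(\Bbbk)=0$; in case (2) one instead uses that $\mathcal C$ itself is already smooth, so $\nu_1$ is an isomorphism. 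Second, even granting $h^0(\overline X,\mathcal L)=d+n$, your passage to ``$|\mathcal L|$ embeds $\overline X$ as a variety of minimal degree'' is circular as written: computing the degree of the image as $\deg\mathcal L^n=d$ presupposes that the map is birational onto its image, and the parenthetical ``one checks $|\mathcal L|$ separates points and tangents \dots and propagates this up'' is the whole difficulty, not a remark. The paper does not prove very ampleness by hand; it verifies that $(\,\overline X,\mathcal L)$ has a ladder with normal rungs and sectional genus $0$, deduces $\Delta$-genus $0$ by \cite[Proposition 3.4]{Fu}, and then quotes Fujita's classification \cite[Theorem 5.15]{Fu}, which delivers very ampleness and the minimal-degree embedding simultaneously. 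Your inductive $h^0$ computation would in any case need the vanishing of the intermediate $H^1$'s, which is again part of the $\Delta$-genus-zero package rather than something that follows from sectional rationality alone.

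For the final equivalence your backward implication is fine, but the forward one has the gap you yourself flag: at a general point of an $(n-1)$-dimensional component of $\mathrm{Sing}(\pi_\Lambda)$ the fiber of $\nu$ may be a single non-reduced point, and one must still show the general curve section is non-normal there (this can be rescued by Nakayama applied to $\mathcal F\otimes\mathcal O_{\mathcal C}$ where $\mathcal F=(\pi_\Lambda)_*\mathcal O_{\widetilde X}/\mathcal O_X$, after checking that restriction to a general complete-intersection curve commutes with $\nu_*$). The paper sidesteps all of this: it reads $\dim\mathrm{Sing}(\pi_\Lambda)=\dim\mathrm{Supp}(\mathcal F)$ off as the degree of the Hilbert polynomial $\chi(\mathcal F(t))$ and compares the coefficient of $\binom{t+n-2}{n-1}$ in $\chi(\mathcal O_X(t))$ and $\chi(\mathcal O_{\widetilde X}(t))$, so that $\dim\mathrm{Supp}(\mathcal F)\le n-2$ becomes literally equivalent to the general curve section of $X$ having arithmetic genus $0$. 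You should adopt that Euler-characteristic argument, or at least supply the Nakayama step, and you must insert the Bertini-normality and Fujita inputs to make the first half rigorous.
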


\begin{proof}
Let $\nu: Y \rightarrow X$ be the normalization of $X$, so that $Y$
is an $n$-dimensional normal projective variety and $\nu$ is a
finite surjective birational morphism. Also the line bundle
$\mathcal{L} := \nu^{*} \mathcal{O}_X(1)$ on $Y$ is ample and base
point free. Let $\mathbb{H}_1,\ldots, \mathbb{H}_{n-1}$ be general
hyperplanes and consider the $\ell$-dimensional irreducible
varieties $X_{\ell} := X \cap \mathbb{H}_1 \cap \cdots \cap
\mathbb{H}_{n-\ell}$ and their preimages $Y_{\ell} :=
\nu^{-1}(X_{\ell}) \quad (\ell=1,\ldots n)$. As the hyperplanes
$\mathbb{H}_j$ are general, $X_{\ell}$ is not contained in the
singular locus of $\nu$ and so $Y_{\ell}$ are irreducible and the
induced finite morphisms
\begin{equation*}
\nu_{\ell} := \nu \upharpoonright : Y_{\ell} \twoheadrightarrow
X_{\ell}, \quad \ell = 1, \ldots, n
\end{equation*}
are birational. Note that $X_1$ is a rational curve since $X$ is
sectionally rational.

Assume first that $\rm{char}(\Bbbk) = 0$. As $Y_{\ell} \subset Y$ is
cut out by the $n-\ell$ general divisors $\nu^{*}(X \cap
\mathbb{H}_1), \ldots ,\nu^{*}(X \cap \mathbb{H}_{n-\ell}) \in
|\mathcal{L}|$, it is normal by the Bertini Theorem \cite[Corollary
3.4.2]{FlOV}. So, the sequence
\begin{equation*}
Y_1 \subset Y_2 \subset \ldots \subset Y_t = Y
\end{equation*}
forms a ladder with normal rungs of the polarized variety
$(Y,\mathcal{L})$ in the sense of T. Fujita\cite{Fu}. As $\nu_1:
Y_1\twoheadrightarrow X_1$ is birational, it follows that $Y_1 \cong
\mathbb{P}^1$ and hence the sectional genus $g(Y,\mathcal{L})$ of
the polarized variety $(Y,\mathcal{L})$ is equal to zero.

Assume now, that $X$ is a sectionally smooth rational surface. Thus
the curve $X_1 \subset X$ is smooth rational and $\nu_1:Y_1
\twoheadrightarrow X_1$ is an isomorphism. Hence, again the
polarized surface $(Y,\mathcal{L})$ has a ladder $Y_1 \subset Y_2 =
Y$ with normal rungs and its sectional genus $g(Y,\mathcal{L})$ is
zero.

Thus, in both cases the sectional genera satisfy $g(Y,\mathcal{L}) =
0$. Therefore, by \cite[Proposition 3.4]{Fu}, the $\Delta$-genus
$\Delta(Y,\mathcal{L})$ of the polarized variety $(Y,\mathcal{L})$
is equal to zero. According to T. Fujita's Classification Theorem
\cite[Theorem 5.15]{Fu}, it now follows that $\mathcal{L}$ is a very
ample line bundle which embeds $Y$ to the $(d+n-1)$-dimensional
projective space as a variety of minimal degree. Let
\begin{equation*}
\widetilde{X} \subset \mathbb{P}^{d+n-1}
\end{equation*}
be the image of the linearly normal embedding
$\varphi_{|\mathcal{L}|} : Y \rightarrow \mathbb{P}^{d+n-1}$. It is
clear that the normalization map $\nu : Y \rightarrow X$ consists of
the embedding $\varphi_{|\mathcal{L}|}$ of $Y$ followed by a linear
projection $\pi_{\Lambda} : \mathbb{P}^{d+n-1} \setminus \Lambda
\rightarrow \mathbb{P}^r$ from a linear space $\Lambda =
\mathbb{P}^{d+n-r-2}$. In particular, the map $\pi_{\Lambda} :
\widetilde{X} \rightarrow X$ is the normalization of $X$.

Finally, consider the short exact sequence $0 \rightarrow
\mathcal{O}_X \rightarrow (\pi_{\Lambda} )_*
\mathcal{O}_{\widetilde{X}} \rightarrow \mathcal{F}  \rightarrow 0$
on $X$ where $\mathcal{F}$ is the quotient sheaf $(\pi_{\Lambda} )_*
\mathcal{O}_{\widetilde{X}}/\mathcal{O}_X$. Note that
$\rm{Sing}(\pi_{\Lambda}) = \rm{Supp}(\mathcal{F})$. In particular,
the dimension of $\rm{Sing}(\pi_{\Lambda})$ is equal to the degree
of the Euler-Poincar$\acute{e}$ characteristic $\chi
(\mathcal{F}(t))$. Let us write $\chi (\mathcal{O}_X (t))$ and $\chi
(\mathcal{O}_{\widetilde{X}} (t))$ respectively as
\begin{equation*}
\chi (\mathcal{O}_X (t)) = \sum_{j=0} ^n \chi_j (\mathcal{O}_X (1))
{{t+j-1} \choose {j}} \quad \mbox{and} \quad \chi
(\mathcal{O}_{\widetilde{X}} (t)) = \sum_{j=0} ^n \chi_j
(\mathcal{O}_{\widetilde{X}} (1)) {{t+j-1} \choose {j}}.
\end{equation*}
Here, it holds that $\chi_n (\mathcal{O}_X (1))=\chi_n
(\mathcal{O}_{\widetilde{X}} (1))=d$ since $\pi_{\Lambda} :
\widetilde{X} \rightarrow X$ is a finite birational morphism. Also
$\chi_{n-1} (\mathcal{O}_{\widetilde{X}} (1))=1$ since the general
linear curve section of $\widetilde{X}$ is a smooth rational curve.
Now, let $m$ be the degree of the polynomial $\chi
(\mathcal{F}(t))$. Then, from the relation $\chi (\mathcal{F}(t))
=\chi (\mathcal{O}_{\widetilde{X}} (t)) - \chi (\mathcal{O}_X (t))$
among the Euler-Poincar$\acute{e}$ characteristics, we can see that
$m \leq n-2$ if and only if $\chi_{n-1} (\mathcal{O}_X (1))=1$ and
hence the general linear curve section of $X$ is a curve of
arithmetic genus $0$, or equivalently, a smooth rational curve.
\end{proof}

It occurs to me, that Corollary 4.2 could be extended as follows:\\

Assume that $X \subset \mathbb{P}^r$ is as in Theorem 4.1 and that X has only finitely many non-normal points (which is always the case if condition (2) of Theorem 4.1 holds). Then
$$\chi(\mathcal{O}_X(t)) = d\binom{t+n-1}{n} + \binom{t+n-1}{n-1} - \delta(X)$$
where
$$\delta(X) = \mathrm{length}\big((\pi_{\Lambda})_{*}\mathcal{O}_{\widetilde{X}}/ \mathcal{O}_X\big).$$

\begin{corollary}\label{cor:singularityofsurface}
Let $X \subset \mathbb{P}^r$ be a sectionally smooth rational
surface of degree $d$. Then
\begin{equation*}
\chi (\mathcal{O}_X (t)) = d {{t+1} \choose {2}}  + t + 1- \delta
(X)
\end{equation*}
for a non-negative integer $\delta (X)$. Furthermore, $X$ is smooth
if and only if $\delta (X)=0$.
\end{corollary}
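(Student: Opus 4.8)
The plan is to read everything off Theorem~\ref{thm:sectionallyrational} in the surface case (hypothesis~(2)), together with one Riemann--Roch computation. By that theorem there is a $2$-dimensional variety of minimal degree $\widetilde{X}\subset\mathbb{P}^{d+1}$ and a linear projection $\pi_{\Lambda}\colon\mathbb{P}^{d+1}\setminus\Lambda\to\mathbb{P}^{r}$ from a centre $\Lambda$ disjoint from $\widetilde{X}$ such that $\pi_{\Lambda}\colon\widetilde{X}\to X$ is the normalization of $X$, and moreover $\Sing(\pi_{\Lambda})$ has dimension $\leq n-2=0$. I would then take the short exact sequence of coherent $\mathcal{O}_{X}$-modules used at the end of the proof of Theorem~\ref{thm:sectionallyrational},
\[
0\longrightarrow\mathcal{O}_{X}\longrightarrow(\pi_{\Lambda})_{*}\mathcal{O}_{\widetilde{X}}\longrightarrow\mathcal{F}\longrightarrow 0,\qquad \mathcal{F}:=(\pi_{\Lambda})_{*}\mathcal{O}_{\widetilde{X}}/\mathcal{O}_{X},
\]
and note that $\Supp(\mathcal{F})=\Sing(\pi_{\Lambda})$ is finite, so $\mathcal{F}$ has finite length and $\chi(\mathcal{F}(t))$ is the constant $\delta(X):=\length\big((\pi_{\Lambda})_{*}\mathcal{O}_{\widetilde{X}}/\mathcal{O}_{X}\big)\geq 0$.

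Twisting the displayed sequence by $\mathcal{O}_{X}(t)$ and passing to Euler characteristics gives $\chi(\mathcal{O}_{X}(t))=\chi(\mathcal{O}_{\widetilde{X}}(t))-\delta(X)$, so it remains to compute the Hilbert polynomial of $\widetilde{X}$. Here I would use that $\widetilde{X}\subset\mathbb{P}^{d+1}$ is a surface of minimal degree: it is arithmetically Cohen--Macaulay, a general hyperplane section $\widetilde{C}\subset\widetilde{X}$ is a rational normal curve of degree $d$ with $\chi(\mathcal{O}_{\widetilde{C}}(t))=dt+1$, and $\chi(\mathcal{O}_{\widetilde{X}})=1$ since varieties of minimal degree are rational with rational singularities and hence satisfy $H^{i}(\mathcal{O}_{\widetilde{X}})=0$ for $i>0$. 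Plugging this into the sequence $0\to\mathcal{O}_{\widetilde{X}}(t-1)\to\mathcal{O}_{\widetilde{X}}(t)\to\mathcal{O}_{\widetilde{C}}(t)\to 0$ and summing the telescoping differences yields $\chi(\mathcal{O}_{\widetilde{X}}(t))=1+\sum_{s=1}^{t}(ds+1)=d\binom{t+1}{2}+t+1$, and therefore
\[
\chi(\mathcal{O}_{X}(t))=d\binom{t+1}{2}+t+1-\delta(X),
\]
with $\delta(X)\geq 0$, which is the asserted formula.

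For the equivalence I would argue as follows. If $X$ is smooth it is normal, so the finite birational morphism $\pi_{\Lambda}\colon\widetilde{X}\to X$ is an isomorphism, whence $\mathcal{F}=0$ and $\delta(X)=0$. Conversely, if $\delta(X)=0$ then $\mathcal{F}=0$, so $(\pi_{\Lambda})_{*}\mathcal{O}_{\widetilde{X}}=\mathcal{O}_{X}$ and $\pi_{\Lambda}\colon\widetilde{X}\to X$ is an isomorphism; hence $X$ is normal and abstractly isomorphic to $\widetilde{X}$, which is smooth, so $X$ is smooth.

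The Riemann--Roch bookkeeping and the hyperplane-section computation are routine once one records $\chi(\mathcal{O}_{\widetilde{X}})=1$; the step demanding genuine care is the last one, where $\delta(X)=0$ has to be promoted from the normality of $X$ to its smoothness. This amounts to knowing that the variety of minimal degree $\widetilde{X}$ produced by Theorem~\ref{thm:sectionallyrational} is smooth (equivalently, not a cone over a rational normal curve), and is where the precise form of Fujita's classification underlying Theorem~\ref{thm:sectionallyrational} must be invoked; I expect this to be the main obstacle.
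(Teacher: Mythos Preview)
Your approach matches the paper's: use the normalization exact sequence from Theorem~\ref{thm:sectionallyrational}, note that $\mathcal{F}$ has finite support so $\chi(\mathcal{F}(t))=\delta(X)$ is a non-negative constant, and read off the formula from the Hilbert polynomial of $\widetilde{X}$ (which the paper simply states as $d\binom{t+1}{2}+t+1$, without your explicit telescoping computation).

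The obstacle you flag at the end is genuine, but you are looking for its resolution in the wrong place. Fujita's classification does \emph{not} force $\widetilde{X}$ to be smooth: a priori $\widetilde{X}$ may well be the cone $S(0,d)$. The paper instead uses the dichotomy that a surface of minimal degree is either smooth or a cone, and then (tersely) rules out the cone case under the hypothesis $\delta(X)=0$. The point is that the vertex $v$ of a cone $S(0,d)\subset\mathbb{P}^{d+1}$ has embedded Zariski tangent space equal to the whole ambient $\mathbb{P}^{d+1}$, so a linear projection to $\mathbb{P}^r$ with $r<d+1$ can never be a closed immersion at $v$. Hence $\delta(X)=0\Rightarrow\pi_\Lambda$ is an isomorphism $\Rightarrow\widetilde{X}$ is not a cone $\Rightarrow\widetilde{X}$ is smooth $\Rightarrow X$ is smooth. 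The paper compresses all of this into the single sentence ``Since $\widetilde{X}$ is either smooth or else a cone, we can rephrase this fact as $\delta(X)=0$ if and only if $X$ is smooth.''
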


\begin{proof}
From Theorem \ref{thm:sectionallyrational} and its proof, we have
$\chi (\mathcal{O}_X (t)) =\chi (\mathcal{O}_{\widetilde{X}} (t)) -
\chi (\mathcal{F}(t))$. Also $\rm{Supp} (\mathcal{F})$ is at most a
finite set and hence $\chi (\mathcal{F}(t))$ is a non-negative
integer, say $\delta (X)$. Since
\begin{equation*}
\chi (\mathcal{O}_{\widetilde{X}} (t)) = d {{t+1} \choose {2}}  + t
+ 1,
\end{equation*}
we get the desired formula of $\chi (\mathcal{O}_X (t))$. Moreover,
$\delta (X)=0$ if and only if $\pi_{\Lambda} : \widetilde{X}
\rightarrow \mathbb{P}^r$ is an isomorphic projection of
$\widetilde{X}$. Since $\widetilde{X}$ is either smooth or else a
cone, we can rephrase this fact as $\delta (X)=0$ if and only if $X$
is smooth.
\end{proof}

\begin{corollary}\label{cor:linearprojection}
Let $X \subset \mathbb{P}^r$ be a nondegenerate irreducible
projective variety of dimension $n$ and degree $d$ which is of
maximal sectional regularity. If $\rm{char}(\Bbbk) = 0$ or if $n=2$,
then $X$ is a projection of a rational normal scroll. More
precisely, $X = \pi_{\Lambda} (\widetilde{X})$ where
\begin{itemize}
\item[\rm{(a)}] $\widetilde{X} \subset \mathbb{P}^{d+n-1}$ is a rational normal $n$-fold scroll,
\item[\rm{(b)}] $\Lambda = \mathbb{P}^{d+n-r-2} \subset \mathbb{P}^{d+n-1}$ is a subspace with $\widetilde{X}
\cap \Lambda = \emptyset$,
\item[\rm{(c)}] $\pi_{\Lambda} : \mathbb{P}^{d+n-1} \setminus \Lambda \rightarrow \mathbb{P}^r$ is the linear projection map from $\Lambda$ and
\item[\rm{(d)}] $\pi_{\Lambda} \upharpoonright : \widetilde{X} \rightarrow X$ is the normalization of $X$.
\end{itemize}
Furthermore, if $X$ is not a cone then $\widetilde{X}$ is a smooth
rational normal scroll.
\end{corollary}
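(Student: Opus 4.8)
The plan is to deduce the statement almost entirely from Theorem~\ref{thm:sectionallyrational}, the only additional inputs being the classical classification of varieties of minimal degree and a short observation about projections of cones. First I would recall that every variety of maximal sectional regularity is sectionally smooth rational (cf. the discussion at the beginning of this section and Notation and Remarks~\ref{nar:curveofmaxreg}(A)). Hence, if $\mathrm{char}(\Bbbk)=0$ then $X$ is in particular a sectionally rational variety, so hypothesis~(1) of Theorem~\ref{thm:sectionallyrational} is satisfied; and if $n=2$ then $X$ is a sectionally smooth rational surface, so hypothesis~(2) is satisfied. In either case Theorem~\ref{thm:sectionallyrational} applies and produces an $n$-dimensional variety $\widetilde{X}\subset\mathbb{P}^{d+n-1}$ of minimal degree, a subspace $\Lambda=\mathbb{P}^{d+n-r-2}$ with $\widetilde{X}\cap\Lambda=\emptyset$, and a linear projection $\pi_{\Lambda}$ whose restriction to $\widetilde{X}$ is the normalization of $X$. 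This gives statements (b), (c) and (d) verbatim, so the remaining tasks are to sharpen ``variety of minimal degree'' to ``rational normal scroll'' in (a) and to prove the final assertion.

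For (a) I would invoke the classification of varieties of minimal degree: every such variety is a rational normal scroll, a quadric hypersurface, or a (cone over a) Veronese surface $v_2(\mathbb{P}^2)\subset\mathbb{P}^5$. Now $\widetilde{X}$ has codimension $d-1$ in $\mathbb{P}^{d+n-1}$, hence minimal degree $d$, and by the standing hypotheses $d\geq c+3\geq 6$. A quadric hypersurface has codimension $1<d-1$ and every Veronese-type variety of minimal degree has degree $4<6\leq d$; so $\widetilde{X}$ must be a rational normal scroll, which proves (a).

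Finally I would prove that $\widetilde{X}$ is a \emph{smooth} rational normal scroll whenever $X$ is not a cone, arguing by contraposition and using the standard fact that a rational normal scroll is smooth if and only if it is not a cone. So suppose $\widetilde{X}=S(0,a_2,\ldots,a_n)$ is a cone with vertex the linear space $\mathbb{V}$. Given any $w\in\pi_{\Lambda}(\mathbb{V})$ and any $x\in X$, pick preimages $\widetilde{v}\in\mathbb{V}$ and $\widetilde{x}\in\widetilde{X}$; the line joining $\widetilde{v}$ and $\widetilde{x}$ lies on $\widetilde{X}$ by the cone property, hence is disjoint from $\Lambda$ (as $\widetilde{X}\cap\Lambda=\emptyset$), so its image under $\pi_{\Lambda}$ is the line $\overline{wx}$, which is contained in $\pi_{\Lambda}(\widetilde{X})=X$. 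Since $X$ is nondegenerate of degree at least $2$, the linear space $\pi_{\Lambda}(\mathbb{V})$ is nonempty and properly contained in $X$, so $X$ is a cone with vertex $\pi_{\Lambda}(\mathbb{V})$, contradicting our assumption. I expect this last step to be the only point requiring care: once one exploits $\widetilde{X}\cap\Lambda=\emptyset$, the survival of the ruling lines of $\widetilde{X}$ under the projection is automatic, and the only things to watch are the harmless degenerate cases $w=x$ and $\pi_{\Lambda}(\mathbb{V})=X$ (the latter being excluded since $X$ is not a linear space). Everything else is a direct citation of Theorem~\ref{thm:sectionallyrational} and of the classification of minimal-degree varieties.
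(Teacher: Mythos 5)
Your proposal is correct and follows essentially the same route as the paper: apply Theorem~\ref{thm:sectionallyrational} (noting that maximal sectional regularity implies sectionally smooth rational), use the degree bound together with the classification of varieties of minimal degree to rule out quadrics and the Veronese cases, and observe that $\widetilde{X}$ a cone forces $X$ to be a cone. The paper merely asserts the last two steps without proof, so your explicit verification that the projection of a cone from a center disjoint from it is again a cone is a welcome (and correct) elaboration rather than a deviation.
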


\begin{proof}
Since $X$ is sectionally smooth rational (cf. Notation and Remarks
\ref{nar:curveofmaxreg}(A)), it holds by Theorem
\ref{thm:sectionallyrational} that $X$ is a projection of a variety
of minimal degree. In our case, $d$ is at least $5$ and hence the
projecting variety $\widetilde{X}$ should be an $n$-fold rational
normal scroll.

If $X$ is not a cone, then $\widetilde{X}$ should be not a cone and
hence smooth by the well-known classification result of varieties of
minimal degree.
\end{proof}

\section{The extremal variety of a variety of maximal sectional regularity}

\noindent Let $X \subset \mathbb{P}^r$ be a nondegenerate
irreducible projective variety of dimension $n$, codimension $c \geq
3$ and degree $d \geq c+3$. Assume that $X$ is of maximal sectional
regularity and let $\mathcal{U}(X) \subset
\mathbb{G}(c+1,\mathbb{P}^r)$ be as in Notation and Reminder
\ref{4.1'' Notation and Reminder}(D). Then for any $\Lambda \in
\mathcal{U}(X)$, the intersection
\begin{equation*}
\mathcal{C}_{\Lambda} := X \cap \Lambda \subset \mathbb{P}^{c+1}
\end{equation*}
is an integral curve of maximal regularity. In particular, it admits
a unique $(d-c+1)$-secant line, say $\mathbb{L}_{\Lambda}$ (cf.
Notation and Remarks \ref{nar:curveofmaxreg}(B)). Along this line,
we consider the \textit{extremal variety} $\mathbb{F}(X)$ of $X$
which is defined as
\begin{equation*}
\mathbb{F}(X) = \overline{\bigcup_{\Lambda \in \mathcal{U}(X)}
\mathbb{L}_{\Lambda}} ~ \subset ~ \mathbb{P}^r .
\end{equation*}
Through the next two sections it will turn out that either
$\mathbb{F}(X)$ is an $n$-dimensional linear space or else $c=3$ and
$\mathbb{F}(X)$ is the $(n+1)$-fold rational normal scroll
\begin{equation*}
S(\underbrace{0,\ldots,0}_{(n-2)-\rm{times}},1,1,1) \subset
\mathbb{P}^{n+3}.
\end{equation*}
This structure of the extremal variety will play a crucial role in
the classification of varieties of maximal sectional regularity.

Along this line, this section is devoted to prove a criterion on the
linearity of $\mathbb{F}(X)$ and to classify varieties of maximal
sectional regularity whose extremal variety is a linear space.

\begin{lemma}\label{lem:planaritycriterion}
Let $X \subset \mathbb{P}^r$ be a nondegenerate irreducible
projective variety of dimension $n$, codimension $c \geq 3$ and
degree $d \geq c+3$. Suppose that there exists an $n$-dimensional
linear subspace $\mathbb{F} \subset \mathbb{P}^r$ such that $\dim(X
\cap \mathbb{F}) = n-1$.
\begin{itemize}
\item[\rm{(a)}] If $\deg_{\mathbb{F}}(X\cap \mathbb{F}) \geq d-c+1$, then $X$ is of maximal sectional regularity and $\mathbb{F} = \mathbb{F}(X)$.
\item[\rm{(b)}] Suppose that $X$ is a variety of maximal sectional regularity. If $\mathbb{L}_{\Lambda} \subset \mathbb{F}$ for general $\Lambda \in \mathcal{U}(X)$, then $\mathbb{F}(X) = \mathbb{F}$.
\end{itemize}
\end{lemma}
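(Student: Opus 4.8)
The plan is to reduce both statements to the curve situation handled in Section 2 by taking a general linear subspace $\Lambda$ of dimension $c+1$ and cutting everything down to it. Fix such a general $\Lambda \in \mathbb{G}(c+1,\mathbb{P}^r)$, and set $\mathcal{C}_\Lambda := X \cap \Lambda$, which by genericity is a nondegenerate integral curve in $\mathbb{P}^{c+1}$ of degree $d$; also $\mathbb{G} := \mathbb{F} \cap \Lambda$ is a general linear subspace of $\mathbb{F}$, hence of dimension $n - (r-c-1) = c+1-n \ge 1$ when we arrange $\Lambda$ to meet $\mathbb{F}$ in the expected dimension. The key numerical input is that cutting $X \cap \mathbb{F}$ (an $(n-1)$-dimensional scheme of degree $\ge d-c+1$ in $\mathbb{F} \cong \mathbb{P}^n$) by $\Lambda$ yields a scheme $\mathcal{C}_\Lambda \cap \mathbb{G}$ inside the line or linear space $\mathbb{G}$ whose length is still $\ge d-c+1$. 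Here I must be slightly careful: if $n \ge 3$, $\mathbb{G}$ has dimension $> 1$, so I should first intersect once more with a general hyperplane through $\mathbb{G}$ appropriately, or more cleanly, observe that $X$ of maximal sectional regularity is equivalent to the one-dimensional section $\mathcal{C}_\Lambda$ being of maximal regularity, and similarly iterate the hyperplane-section trick of Proposition 3.4 so that in the end $\mathbb{G}$ is a line.

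For part (a): once $\mathcal{C}_\Lambda \subset \mathbb{P}^{c+1}$ is a curve of degree $d$ and $\mathbb{G} \subset \mathbb{F} \cap \Lambda$ is a line with $\operatorname{length}(\mathcal{C}_\Lambda \cap \mathbb{G}) \ge d - c + 1$, Gruson--Lazarsfeld--Peskine (Notation and Remarks 2.1(A), with $r$ there equal to $c+1$) forces $\operatorname{reg}(\mathcal{C}_\Lambda) = d - c + 1$, i.e. $\mathcal{C}_\Lambda$ is of maximal regularity; since this holds for general $\Lambda$, $X$ is of maximal sectional regularity. Moreover the $(d-c+1)$-secant line of $\mathcal{C}_\Lambda$ is \emph{unique} (Notation and Remarks 2.1(C)), so $\mathbb{L}_\Lambda = \mathbb{G} = \mathbb{F} \cap \Lambda$. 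Letting $\Lambda$ vary over $\mathcal{U}(X)$ and taking closures, $\mathbb{F}(X) = \overline{\bigcup_\Lambda \mathbb{L}_\Lambda} = \overline{\bigcup_\Lambda (\mathbb{F} \cap \Lambda)}$; since $\mathbb{F}$ is a fixed linear space and the $\Lambda$'s sweep out a dense subset of $\mathbb{P}^r$, the union of the $\mathbb{F} \cap \Lambda$ is dense in $\mathbb{F}$, whence $\mathbb{F}(X) = \mathbb{F}$.

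For part (b): assume $X$ is already of maximal sectional regularity and $\mathbb{L}_\Lambda \subset \mathbb{F}$ for general $\Lambda$. By definition $\mathbb{F}(X) = \overline{\bigcup_{\Lambda \in \mathcal{U}(X)} \mathbb{L}_\Lambda}$; the hypothesis gives $\mathbb{L}_\Lambda \subset \mathbb{F}$ for all $\Lambda$ in a dense open subset, hence $\bigcup \mathbb{L}_\Lambda \subseteq \mathbb{F}$ and so $\mathbb{F}(X) \subseteq \mathbb{F}$. For the reverse inclusion I note that $\mathbb{F}(X)$, being a union of lines $\mathbb{L}_\Lambda$ each of which is a $(d-c+1)$-secant line to $X$ with $d-c+1 \ge 4$, must have dimension $\ge n$: indeed, intersecting with a general $\Lambda$, the locus of such lines $\Sigma(X) \cap \mathbb{G}(1,\Lambda)$ sweeping out $\mathbb{F}(X) \cap \Lambda$ has the dimension computed via Proposition 3.4 / Theorem 3.5 ($\overline{\mathfrak{d}}_{d-c+1}(X) = 2n-2$), and a standard incidence-correspondence count shows the union of these lines is $n$-dimensional. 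Since $\mathbb{F}(X) \subseteq \mathbb{F}$ with $\dim \mathbb{F} = n \le \dim \mathbb{F}(X)$ and $\mathbb{F}$ is irreducible, equality $\mathbb{F}(X) = \mathbb{F}$ follows.

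The main obstacle I anticipate is the bookkeeping in the reduction to curves when $n \ge 3$: I must make sure that taking $n-1$ general hyperplane sections is compatible with the fixed linear space $\mathbb{F}$, i.e. that a general sequence of hyperplanes cuts $\mathbb{F}$ down to a line while preserving both the integrality of $\mathcal{C}_\Lambda$ and the degree bound $\operatorname{length}(\mathcal{C}_\Lambda \cap \mathbb{G}) \ge d-c+1$ — the degree of $X \cap \mathbb{F}$ as an $(n-1)$-cycle in $\mathbb{P}^n$ is preserved under general hyperplane section by Bézout, so this should go through, but the precise genericity statement (that the general member of $\mathcal{U}(X)$ can be taken inside a generic flag through $\mathbb{F}$) needs to be stated carefully. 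The second delicate point is the dimension count in part (b) showing $\dim \mathbb{F}(X) \ge n$; if one prefers to avoid it, an alternative is to run the argument of part (a) in reverse: from $\mathbb{L}_\Lambda \subset \mathbb{F}$ one gets that $X \cap \mathbb{F}$ contains, for each general $\Lambda$, the $(d-c+1)$ points $\mathcal{C}_\Lambda \cap \mathbb{L}_\Lambda$, and letting $\Lambda$ vary these points sweep out an $(n-1)$-dimensional subscheme of $\mathbb{F}$ of degree $\ge d-c+1$, so $\dim(X \cap \mathbb{F}) = n-1$ and we are back in the hypothesis of part (a) with $\mathbb{F}$ in place of an abstract $\mathbb{F}$, giving $\mathbb{F}(X) = \mathbb{F}$ directly.
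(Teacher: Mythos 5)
Your part (a) follows essentially the paper's argument: for general $\Lambda\in\mathbb{G}(c+1,\mathbb{P}^r)$ the line $\mathbb{F}\cap\Lambda$ is a proper $t$-secant line to the integral curve $\mathcal{C}_\Lambda$ with $t=\deg_{\mathbb{F}}(X\cap\mathbb{F})$, so $d-c+1\le t\le\reg(\mathcal{C}_\Lambda)\le d-c+1$ forces maximal regularity, and uniqueness of the extremal secant line identifies $\mathbb{L}_\Lambda$ with $\mathbb{F}\cap\Lambda$. Two corrections, though. First, your worry about the reduction for $n\ge 3$ is unfounded: since $\dim\mathbb{F}+\dim\Lambda=n+(c+1)=r+1$, a general $\Lambda$ meets $\mathbb{F}$ in a line, and no further hyperplane sections are needed (your formula $c+1-n$ for $\dim(\mathbb{F}\cap\Lambda)$ is a slip; the correct value is $n-(r-c-1)=1$). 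Second, the identity $\mathbb{F}(X)=\overline{\bigcup_{\Lambda}(\mathbb{F}\cap\Lambda)}$ requires $\mathbb{L}_\Lambda=\mathbb{F}\cap\Lambda$ for \emph{every} $\Lambda\in\mathcal{U}(X)$, not just for general ones, because the union defining $\mathbb{F}(X)$ runs over all of $\mathcal{U}(X)$ and a priori some non-general $\mathbb{L}_\Lambda$ could stick out of $\mathbb{F}$. The paper closes this with a short second step: for arbitrary $\Lambda\in\mathcal{U}(X)$ the line $\mathbb{F}\cap\Lambda$ still meets $\mathcal{C}_\Lambda$ in length at least $d-c+1$ (it cannot be contained in the integral curve $\mathcal{C}_\Lambda$), hence equals $\mathbb{L}_\Lambda$ by uniqueness. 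You should add this.

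For part (b) your primary route is genuinely different from the paper's: you prove $\mathbb{F}(X)\subseteq\mathbb{F}$ directly from the hypothesis and then argue $\dim\mathbb{F}(X)\ge n$ from Theorem \ref{thm:maximaldimension} together with the fact that an $m$-dimensional variety carries at most a $(2m-2)$-dimensional family of lines. The dimension count is fine, but the first half has the same flaw as above: the hypothesis only gives $\mathbb{L}_\Lambda\subset\mathbb{F}$ for \emph{general} $\Lambda$, which does not by itself yield $\mathbb{F}(X)\subseteq\mathbb{F}$. The ``alternative'' you sketch at the end is in fact the paper's proof and is the clean way out: for general $\Lambda$ one has $\mathbb{L}_\Lambda=\mathbb{F}\cap\Lambda$, a general line of $\mathbb{F}$, and therefore $\deg_{\mathbb{F}}(X\cap\mathbb{F})=\length\big((X\cap\mathbb{F})\cap\mathbb{L}_\Lambda\big)\ge\length(\mathcal{C}_\Lambda\cap\mathbb{L}_\Lambda)=d-c+1$, which places you exactly in the hypothesis of part (a); no sweeping or incidence argument is needed. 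I recommend promoting that alternative to the main argument and discarding the dimension count.
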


\begin{proof}
(a): Set $t := \deg_{\mathbb{F}}(X \cap \mathbb{F})$ and let
$\Lambda = \mathbb{P}^{c+1} \in \mathbb{G} (c+1,\mathbb{P}^r)$ be a
general member. Then the line $\mathbb{L} := \mathbb{F} \cap
\Lambda$ is $t$-secant to the integral curve $\mathcal{C}_{\Lambda}
:= X \cap \Lambda \subset \mathbb{P}^{c+1}$ of degree $d$. Therefore
$t \leq \reg(\mathcal{C}_{\Lambda} ) \leq d-c+1$, whence $t =
d-c+1$. Thus $\mathcal{C}_\Lambda \subset \mathbb{P}^{c+1}$ is a
curve of maximal regularity and $\mathbb{L} \subset \mathbb{F}$ is
its unique $(d-c+1)$-secant line. This shows that $X$ is a variety
of maximal sectional regularity and $\mathbb{F} \subseteq
\mathbb{F}(X)$. Now, let $\Lambda \in \mathcal{U}(X)$ and consider
the integral curve $\mathcal{C}_{\Lambda} := X \cap \Lambda \subset
\mathbb{P}^{c+1}$ of maximal regularity. It is clear that
$\mathbb{L} := \mathbb{F} \cap \Lambda$ is a line such that
$\rm{length}(\mathcal{C}_\Lambda \cap \mathbb{L}) \geq d-c+1$.
Thus $\mathbb{L}$ is the unique $(d-c+1)$-secant line to $\mathcal{C}_\Lambda$. This shows that $\mathbb{F} = \mathbb{F}(X)$.\\
(b): For general $\Lambda \in \mathcal{U}(X)$, we have
$\mathbb{L}_\Lambda := \mathbb{F} \cap \Lambda$ and thus
\begin{equation*}
d-c+1 = \rm{length}(\mathcal{C}_\Lambda \cap \mathbb{L}_\Lambda)
\leq \rm{length}(X \cap \mathbb{L}_\Lambda) = \rm{length}\big((X
\cap \mathbb{F})\cap \mathbb{L}_\Lambda \big) = \deg_{\mathbb{F}} (X
\cap \mathbb{F}).
\end{equation*}
So, our claim follows by statement (a).
\end{proof}

\begin{proposition}\label{prop:planarity}
Let $X \subset \mathbb{P}^r$ be a nondegenerate irreducible
projective variety of dimension $n$, codimension $c \geq 3$ and
degree $d \geq c+3$. If $X$ is a variety of maximal sectional
regularity, then the following conditions are equivalent.
\begin{itemize}
\item[\rm{(i)}] $\mathbb{F}(X)$ is an $n$-dimensional linear space.
\item[\rm{(ii)}] $\dim \mathbb{F}(X)  = n$.
\end{itemize}
\end{proposition}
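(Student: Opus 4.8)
The implication (i) $\Rightarrow$ (ii) is trivial, so the content is (ii) $\Rightarrow$ (i): if the extremal variety $\mathbb{F}(X)$ has dimension exactly $n$, then it is actually a linear subspace of $\mathbb{P}^r$. The plan is to exploit the fibration structure $\mathbb{F}(X) = \overline{\bigcup_{\Lambda} \mathbb{L}_\Lambda}$ together with a dimension count on the Grassmannian of parameters $\mathcal{U}(X) \subset \mathbb{G}(c+1,\mathbb{P}^r)$, and then to intersect with a general $\Lambda$ to reduce to a statement about a single curve of maximal regularity inside $\mathbb{P}^{c+1}$, where Proposition \ref{prop:2.3} gives us very tight control.

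First I would set up the incidence correspondence $\{(\Lambda, z) : \Lambda \in \mathcal{U}(X),\ z \in \mathbb{L}_\Lambda\}$, whose closure dominates $\mathbb{F}(X)$ via the second projection and dominates (an open subset of) $\mathbb{G}(c+1,\mathbb{P}^r)$ via the first. Since a general linear section $\mathcal{C}_\Lambda = X \cap \Lambda$ has a \emph{unique} $(d-c+1)$-secant line $\mathbb{L}_\Lambda$ by Notation and Remarks \ref{nar:curveofmaxreg}(B)-(C), the first projection has $1$-dimensional fibers, so the incidence variety has dimension $\dim \mathbb{G}(c+1,\mathbb{P}^r) + 1$; comparing with the second projection and the hypothesis $\dim \mathbb{F}(X) = n$ pins down the dimension of a general fiber of the map sending $z \in \mathbb{F}(X)$ to the set of $\Lambda$ with $z \in \mathbb{L}_\Lambda$. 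Next I would intersect $\mathbb{F}(X)$ with a general $(c+1)$-plane $\Lambda \in \mathcal{U}(X)$: since $\dim \mathbb{F}(X) = n = r - c$ and $\dim \Lambda = c+1$, in the generic situation $\mathbb{F}(X) \cap \Lambda$ is a curve, and one checks using part (a)/(b) of Lemma \ref{lem:planaritycriterion} and the uniqueness of $\mathbb{L}_\Lambda$ that this curve must \emph{contain} $\mathbb{L}_\Lambda$ — indeed $\mathbb{L}_\Lambda \subseteq \mathbb{F}(X) \cap \Lambda$ by construction. The key claim to isolate is then: $\mathbb{F}(X) \cap \Lambda = \mathbb{L}_\Lambda$, i.e.\ the slice is exactly a line, not a larger curve; granting this, $\mathbb{F}(X)$ is swept out by a family of lines each of which is cut by a $\Lambda$ from $\mathbb{F}(X)$, and a standard argument shows a variety of dimension $n$ whose general $\mathbb{P}^{c+1}$-section is a line must be an $n$-plane.

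To prove that the slice is exactly $\mathbb{L}_\Lambda$, I would argue by contradiction: if $\mathbb{F}(X) \cap \Lambda$ had a component $\mathcal{D}$ other than $\mathbb{L}_\Lambda$, then moving $\Lambda$ slightly inside $\mathcal{U}(X)$ produces a nearby line $\mathbb{L}_{\Lambda'}$ lying in $\mathbb{F}(X)$, and the union of all these $\mathbb{L}_{\Lambda'}$ (a family of dimension $\dim \mathcal{U}(X)$ of lines, each inside $\mathbb{F}(X)$) already fills up $\mathbb{F}(X)$; one then compares the degree of $\mathbb{F}(X)$ (or of a general curve section of it) against what these lines force, using that $\mathbb{F}(X) \supseteq \mathbb{L}_\Lambda$ meets a general $\Lambda$ in something of dimension $1$ but that the extra component $\mathcal{D}$ would have to vary in a family not accounted for by the $\mathbb{L}_{\Lambda'}$'s, contradicting irreducibility or the dimension count. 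Alternatively — and this is probably the cleaner route — I would apply Proposition \ref{prop:2.3} directly: for general $\Lambda$, $\mathrm{Join}(\mathbb{L}_\Lambda, \mathcal{C}_\Lambda)$ is projectively a cone $S(0,0,\ldots)$ of dimension $3$ over $\mathbb{L}_\Lambda$, and any further ruling structure on $\mathbb{F}(X)\cap \Lambda$ beyond $\mathbb{L}_\Lambda$ itself would force $\mathbb{F}(X)$ to have dimension $> n$ or would contradict that $\mathbb{L}_\Lambda$ is the \emph{unique} extremal secant line.

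The main obstacle I anticipate is the transversality/genericity bookkeeping: ensuring that for \emph{general} $\Lambda \in \mathcal{U}(X)$ the section $\mathbb{F}(X)\cap\Lambda$ is pure of the expected dimension $1$ and that its only component is $\mathbb{L}_\Lambda$ — in particular ruling out that $\mathbb{F}(X)$ is a cone with vertex meeting every $\Lambda$, which would break the dimension count. Handling the cone case (where $X$, and hence plausibly $\mathbb{F}(X)$, could itself be a cone) will likely require a separate short argument, reducing to the non-cone case by taking an affine/linear slice through the vertex, after which the incidence-correspondence dimension count goes through cleanly and the conclusion $\dim(\mathbb{F}(X)\cap\Lambda)=1$ with a single line component forces linearity of $\mathbb{F}(X)$.
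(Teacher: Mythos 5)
Your reduction of (ii) $\Rightarrow$ (i) to the claim that a general $\Lambda \in \mathcal{U}(X)$ slices $\mathbb{F}(X)$ in exactly the line $\mathbb{L}_\Lambda$ is the right overall shape and matches the paper's strategy in spirit; but the argument you offer for that claim has a genuine gap, and it is precisely where the work lies. Your contradiction argument --- that an extra component $\mathcal{D}$ of $\mathbb{F}(X)\cap\Lambda$ ``would have to vary in a family not accounted for by the $\mathbb{L}_{\Lambda'}$'s, contradicting irreducibility or the dimension count'' --- is unsound: $\mathbb{F}(X)$ is \emph{by definition} the closure of the union of the lines $\mathbb{L}_{\Lambda'}$, so every component of every slice is automatically accounted for by those lines. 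Nothing in your count rules out a priori that $\mathbb{F}(X)$ is, say, an $n$-dimensional quadric swept out by the extremal secant lines, whose general $\mathbb{P}^{c+1}$-section would have degree $2$. The ``alternatively'' route via Proposition \ref{prop:2.3} does not engage with this either: that proposition controls ${\rm Join}(\mathbb{L}_\Lambda,\mathcal{C}_\Lambda)$ inside $\Lambda$, not $\mathbb{F}(X)\cap\Lambda$.

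The two ingredients actually needed are the following. First, decompose $\mathbb{F}(X)=D_1\cup\cdots\cup D_t\cup E$ into its $n$-dimensional irreducible components $D_j$ and a closed part $E$ of dimension $\le n-1$; for general $\Lambda$ the set $E\cap\Lambda$ is finite, so the irreducible curve $\mathbb{L}_\Lambda$ must lie in $D_i\cap\Lambda$ for a single $i$, and by Bertini the general $(c+1)$-plane section of the irreducible $n$-fold $D_i$ is an \emph{irreducible} curve of degree $\deg D_i$; since it contains a line it equals $\mathbb{L}_\Lambda$, so $D_i$ is an $n$-plane. This is the step your sketch replaces with hand-waving about degrees. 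Second, to eliminate the remaining components $D_j$, $j\ne i$ (your closing ``standard argument'' tacitly assumes there are none), one checks that $X\cap D_i$ has dimension $n-1$ and $\deg_{D_i}(X\cap D_i)=\mathrm{length}(V_i\cap\mathbb{L}_\Lambda)=d-c+1$, and then invokes Lemma \ref{lem:planaritycriterion}(a): for such a linear space the unique extremal secant line of \emph{every} $\Lambda\in\mathcal{U}(X)$ lies in $D_i$, whence $\mathbb{F}(X)=D_i$ outright. Once the proof is organized this way, your incidence-correspondence dimension count and the separate treatment of the cone case are not needed.
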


\begin{proof}
(i) $\Rightarrow$ (ii) is obvious.\\
(ii) $\Rightarrow$ (i): Let $D_1,\ldots, D_t$ be the different
$n$-dimensional irreducible components of $\mathbb{F}(X)$ and write
\begin{equation*}
\mathbb{F}(X) = D_1 \cup \cdots \cup D_t \cup E
\end{equation*}
where $E \subset \mathbb{P}^r$ is a closed subset of dimension at most $n-1$. Also we write $X \cap D_j$ as $V_j \cup W_j$ where $V_j$ is
either empty of else an equidimensional scheme of dimension $n-1$
and $W_j$ is a closed subscheme of dimension at most $n-2$. Now, choose a
general $\Lambda \in \mathcal{U}(X)$. So, it avoids $W_1 , \ldots ,
W_t$ and $E \cap \Lambda$ is at most a finite set. Then we have
\begin{equation*}
\mathbb{L}_\Lambda \subseteq \mathbb{F}(X) \cap \Lambda = (D_1 \cap \Lambda) \cup \cdots \cup (D_t \cap \Lambda) \cup (E \cap \Lambda)
\end{equation*}
and hence $\mathbb{L}_\Lambda = D_i \cap \Lambda$ for some $i$. This
means that $D_i$ is a linear space. Also
\begin{equation*}
\mathcal{C}_{\Lambda} \cap \mathbb{L}_\Lambda = (X \cap D_i) \cap
\Lambda = V_i \cap \mathbb{L}_\Lambda
\end{equation*}
is nonempty and hence $X \cap D_j$ is of dimension $n-1$.
Furthermore, we have
\begin{equation*}
\deg_{D_i}( X \cap D_i ) = \deg_{D_i} V_i = \mbox{length} (X \cap
D_i \cap \Lambda' ) = \mbox{length} (V_i \cap
\mathbb{L}_\Lambda)=d-c+1.
\end{equation*}
Therefore it follows by Lemma \ref{lem:planaritycriterion}(a) that
$\mathbb{F} (X)$ coincides with $D_i$.
\end{proof}

\begin{lemma}\label{lem:elementarycrucial}
Let $\widetilde{X} = S(\underbrace{0, \ldots , 0}_{k-\rm{times}} ,
a_{k+1} , \ldots , a_n) \subset \mathbb{P}^{d+n-1}$ be a rational
normal $n$-fold scroll for some $0 \leq k \leq n-2$ and positive
integers $a_{k+1} \leq \ldots \leq a_n$. Let $D \subset
\widetilde{X}$ be a divisor linearly equivalent to $sH + tF$ where
$H$ is a hyperplane section of $\widetilde{X}$ and $F \subset
\widetilde{X}$ is an $(n-1)$-dimensional linear space. Then
\begin{itemize}
\item[\rm{(a)}] If $s \geq 2$ or $s=1$ and $t>0$ or $s=0$ and $t > a_n$, then $\langle D \rangle
= \mathbb{P}^{d+n-1}$.
\item[\rm{(b)}] If $s=1$ and $t \leq 0$, then $\rm{dim}~\langle D \rangle
= d+n+t-2$.
\item[\rm{(c)}] If $s=0$ and $a_{i-1} < t \leq a_i$ for some $i\leq
n$, then
\begin{equation*}
\rm{dim}~\langle D \rangle = (a_{k+1} + \cdots + a_{i-1}) +
(n-i+1)t+(i-2).
\end{equation*}
\item[\rm{(d)}] If $s=1$ and $t \leq 0$, then $I_D = I_{\widetilde{X}} + I_{\langle D
\rangle}$.
\end{itemize}
\end{lemma}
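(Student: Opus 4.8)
The plan is to pass to homogeneous coordinate rings and compare graded pieces. Write $S$ for the coordinate ring of $\mathbb P^{d+n-1}$ and $R:=S/I_{\widetilde X}$; since $\widetilde X$ is arithmetically Cohen--Macaulay and nondegenerate, $R=\bigoplus_{m\ge0}H^0(\widetilde X,\mathcal O_{\widetilde X}(mH))$ and the natural map $S_1\to R_1$ is an isomorphism. As $I_{\widetilde X}\subseteq I_D$, the identity $I_D=I_{\widetilde X}+I_{\langle D\rangle}$ is equivalent to the equality of ideals $I_D/I_{\widetilde X}=\overline{I_{\langle D\rangle}}$ in $R$, where $\overline{I_{\langle D\rangle}}$ is the image of $I_{\langle D\rangle}$ and $I_D/I_{\widetilde X}$ is the saturated defining ideal of $D$ in $R$. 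Put $e:=-t\ge0$. A coning-down argument --- when $t<0$ and $\widetilde X$ is a cone, $D$ contains the vertex $\mathbb P^{k-1}$ and is the cone over a divisor of type $H_0-eF_0$ on the smooth scroll $\widetilde X_0=S(a_{k+1},\dots,a_n)$, so that $I_{\widetilde X}=I_{\widetilde X_0}S$, $I_D=I_{D_0}S$, $I_{\langle D\rangle}=I_{\langle D_0\rangle}S$ --- lets us assume $D$ is a Cartier divisor on $\widetilde X$. We use the morphisms $\varphi:\mathbb P(\mathcal E)\to\widetilde X$ and $j:\mathbb P(\mathcal E)\to\mathbb P^1$.

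Fix a section $f\in H^0(\widetilde X,\mathcal O(H-eF))$ defining $D$; then $\mathcal I_{D/\widetilde X}=\mathcal O(eF-H)$, so multiplication by $f$ embeds $\mathcal O((m-1)H+eF)$ into $\mathcal O(mH)$ with image $\mathcal I_{D/\widetilde X}(mH)$. Taking global sections and using that $R$ is the section ring, one obtains, as subspaces of $R_m$,
\begin{equation*}
(I_D/I_{\widetilde X})_m=f\cdot H^0\big(\widetilde X,\mathcal O((m-1)H+eF)\big)\qquad(m\ge1),
\end{equation*}
and $(I_D/I_{\widetilde X})_0=0$. On the other hand, from $H^0(\mathcal I_{\widetilde X}(1))=H^1(\mathcal I_{\widetilde X}(1))=0$ we get $H^0(\mathbb P^{d+n-1},\mathcal I_D(1))=H^0(\widetilde X,\mathcal I_{D/\widetilde X}(H))=f\cdot H^0(\mathcal O(eF))$; since $I_{\langle D\rangle}$ is generated by these linear forms, a basis $g_0,\dots,g_e$ of $H^0(\widetilde X,\mathcal O(eF))$ gives $\overline{I_{\langle D\rangle}}=(fg_0,\dots,fg_e)R$, whence for $m\ge1$
\begin{equation*}
(\overline{I_{\langle D\rangle}})_m=\textstyle\sum_{i=0}^{e}fg_i\cdot R_{m-1}=f\cdot\operatorname{Im}\!\big(H^0(\mathcal O(eF))\otimes H^0(\mathcal O((m-1)H))\to H^0(\mathcal O((m-1)H+eF))\big),
\end{equation*}
the arrow denoting multiplication, and $(\overline{I_{\langle D\rangle}})_0=0$.

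Comparing these formulas, and using that multiplication by $f$ is injective on $H^0(\mathcal O((m-1)H+eF))$ (as $\widetilde X$ is integral), the equality $I_D/I_{\widetilde X}=\overline{I_{\langle D\rangle}}$ --- hence the lemma --- follows as soon as the multiplication map
\begin{equation*}
H^0(\widetilde X,\mathcal O(eF))\otimes H^0(\widetilde X,\mathcal O((m-1)H))\to H^0(\widetilde X,\mathcal O((m-1)H+eF))
\end{equation*}
is surjective for all $m\ge1$. Here the scroll structure enters: pulling back along $\varphi$ and applying $j_*$ and the projection formula identifies this with $H^0(\mathbb P^1,\mathcal O(e))\otimes H^0(\mathbb P^1,\operatorname{Sym}^{m-1}\mathcal E)\to H^0(\mathbb P^1,\operatorname{Sym}^{m-1}\mathcal E(e))$. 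Since $\mathcal E$, hence $\operatorname{Sym}^{m-1}\mathcal E$, is a direct sum of line bundles of non-negative degree on $\mathbb P^1$, and for a single summand $\mathcal O_{\mathbb P^1}(c)$ with $c\ge0$ the map $H^0(\mathcal O(e))\otimes H^0(\mathcal O(c))\to H^0(\mathcal O(c+e))$ is the surjective multiplication of binary forms, the required surjectivity holds.

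The heart of the matter is the bookkeeping in the second paragraph: identifying the linear forms cutting out $\langle D\rangle$, restricted to $\widetilde X$, with $f\cdot H^0(\mathcal O(eF))$, and the graded pieces of the ideal of $D$ with $f\cdot H^0(\mathcal O((m-1)H+eF))$. Once the statement is reduced to the surjectivity of a multiplication map on the base curve $\mathbb P^1$ it becomes immediate; the only other delicate point is the reduction, in the singular case, to $D$ being Cartier.
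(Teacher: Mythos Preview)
Your proof addresses only part (d), leaving (a)--(c) essentially untouched. The phrase ``hence the lemma'' is misleading: what you establish is the ideal equality in (d), not the dimension formulas in (a)--(c). Part (b) does fall out of your computation of $H^0(\mathbb P^{d+n-1},\mathcal I_D(1)) = f\cdot H^0(\mathcal O(eF))$, since this space has dimension $e+1 = 1-t$ and hence $\dim\langle D\rangle = (d+n-1)-(1-t) = d+n+t-2$; but you do not state this. Parts (a) and (c), on the other hand, concern the cases $s\ge 2$ and $s=0$, which you never discuss. The paper handles all of (a)--(c) uniformly via $\dim\langle D\rangle = d+n-1 - h^0(\widetilde X,\mathcal O_{\widetilde X}(H-D))$ and a direct computation of this cohomology in each regime; you should supply this as well.

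For (d) itself, your argument is correct and is essentially the paper's argument, unpacked. The paper sets up the same exact sequence $0\to I_{\widetilde X}\to I_D\to E\to 0$ with $E_m = H^0(\widetilde X,\mathcal O(mH-D))$, then observes that $\mathcal O_{\widetilde X}(H-D)=\mathcal O_{\widetilde X}(eF)$ is $1$-regular with respect to $\mathcal O_{\widetilde X}(H)$, so that $E$ is generated by $E_1$ and therefore $I_D$ is generated by $I_{\widetilde X}$ together with its degree-one piece. The content of that regularity statement is precisely the surjectivity of the multiplication map $H^0(\mathcal O(eF))\otimes H^0(\mathcal O((m-1)H))\to H^0(\mathcal O((m-1)H+eF))$, which you prove directly by pushing forward along $j$ to $\mathbb P^1$ and splitting $\operatorname{Sym}^{m-1}\mathcal E$ into non-negative line bundles. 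Your coning-down reduction to the Cartier case is fine and slightly more explicit than the paper's one-line ``it is enough to take $k=0$''. In short: same route for (d), with the paper naming the phenomenon (Castelnuovo--Mumford regularity) where you reprove its consequence by hand; but you still owe the reader (a) and (c).
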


\begin{proof}
First, observe that it is enough to show the statements in the case
where $k=0$. So, we suppose that $\widetilde{X}$ is smooth. Let $R$
denote the homogeneous coordinate ring of $\mathbb{P}^{d+n-1}$. Also
let $I_D$ and $\mathcal{I}_D$ be respectively the homogeneous ideal
and the sheaf of ideals of $D$ in $\mathbb{P}^{d+n-1}$ and consider
the exact sequence $0 \rightarrow \mathcal{I}_{\widetilde{X}}
\rightarrow \mathcal{I}_{D} \rightarrow \mathcal{O}_{\widetilde{X}}
(-D) \rightarrow 0$. Since $\widetilde{X}$ is a projectively normal
variety, we get the exact sequence
\begin{equation}\label{eq:relativeideal}
0 \rightarrow I_{\widetilde{X}} \rightarrow I_{D} \rightarrow E =
\bigoplus_{j \in \mathbb{Z}} E_j \rightarrow 0
\end{equation}
of graded $R$-modules where $E_j := H^0 (\widetilde{X}
,\mathcal{O}_{\widetilde{X}} (jH-D))$.
\smallskip

\noindent $\rm{(a)} \sim \rm{(c)}$: From the above short exact
sequence (\ref{eq:relativeideal}), we know that
\begin{equation*}
\rm{dim}~\langle D \rangle = d+n-1 - h^0 (\widetilde{X}
,\mathcal{O}_{\widetilde{X}} (H-D)).
\end{equation*}
If $s \geq 2$, then $H^0 (\widetilde{X} ,\mathcal{O}_{\widetilde{X}}
(H-D))=0$ and hence $D$ spans the whole ambient space. If $s=1$,
then $H^0 (\widetilde{X} ,\mathcal{O}_{\widetilde{X}} (H-D)) \cong
H^0 (\mathbb{P}^1 , \mathcal{O}_{\mathbb{P}^1} (-t))$ and hence we
get the desired result. If $s=0$, then
\begin{equation*}
H^0 (\widetilde{X} ,\mathcal{O}_{\widetilde{X}} (H-D)) \cong
\bigoplus_{k+1 \leq j \leq n} H^0 (\mathbb{P}^1 ,
\mathcal{O}_{\mathbb{P}^1} (a_j -t))
\end{equation*}
and hence again we get the desired formula.
\smallskip

\noindent $\rm{(d)}$: One can easily check that
$\mathcal{O}_{\widetilde{X}} (H-D)$ is $1$-regular with respect to
$\mathcal{O}_{\widetilde{X}} (H)$ and hence $E$ is generated by
$E_1$. Then it follows from (\ref{eq:relativeideal}) that $I_D$ is
generated by its degree one piece and $I_{\widetilde{X}}$. This
completes the proof.
\end{proof}

\begin{theorem}\label{thm:linearextremalvariety}
Suppose that $\rm{char}(\Bbbk) = 0$ or $n=2$ and let $X \subset
\mathbb{P}^r$ be a nondegenerate irreducible projective variety of
dimension $n \geq 2$, codimension $c \geq 3$ and degree $d \geq
c+3$. Then the following two conditions are equivalent:
\begin{itemize}
\item[\rm{(i)}] $X$ is a variety of maximal sectional regularity and $\mathbb{F}(X)$ is an $n$-dimensional linear space.
\item[\rm{(ii)}] Either $X$ is a cone over a curve of maximal regularity or else $X =\pi_{\Lambda} (\widetilde{X})$ where
\begin{itemize}
\item[1.] $\widetilde{X} = S(\underbrace{0, \ldots , 0}_{k-\rm{times}} , a_{k+1} , \ldots , a_n) \subset \mathbb{P}^{d+n-1}$
is a rational normal $n$-fold scroll for some $0 \leq k \leq n-2$
and positive integers $a_{k+1} \leq \ldots \leq a_n$,
\item[2.] $D \subset \widetilde{X}$ is an effective divisor linearly equivalent to $H + (1-c)F$ where $H$ is a hyperplane section
of $\widetilde{X}$ and $F \subset \widetilde{X}$ is an
$(n-1)$-dimensional linear space $($hence $\langle D \rangle =
\mathbb{P}^{d-c+n-1})$, and
\item[3.] $\Lambda \subset \langle D \rangle$ is a $(d-c-2)$-dimensional subspace such that the restriction $\pi_{\Lambda} \upharpoonright :\langle D \rangle \setminus \Lambda \twoheadrightarrow \mathbb{P}^n$ is generically injective along $D$.
\end{itemize}
\end{itemize}
In this case, $X$ is singular.
\end{theorem}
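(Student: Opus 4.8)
The plan is to prove the two implications separately. The implication $(ii)\Rightarrow(i)$ is the easy one: in each of the two alternatives I would produce an $n$-dimensional linear space $\mathbb{F}\subset\mathbb{P}^r$ with $\dim(X\cap\mathbb{F})=n-1$ and $\deg_{\mathbb{F}}(X\cap\mathbb{F})\geq d-c+1$, and then invoke Lemma \ref{lem:planaritycriterion}(a), which at once gives that $X$ is of maximal sectional regularity with $\mathbb{F}=\mathbb{F}(X)$. If $X=\mathrm{Join}(\mathbb{P}^{n-2},\mathcal{C})$ is a cone over a curve $\mathcal{C}\subset\mathbb{P}^{c+1}$ of maximal regularity, take $\mathbb{F}=\mathrm{Join}(\mathbb{P}^{n-2},\mathbb{L}_{\mathcal{C}})$; then $X\cap\mathbb{F}\supseteq\mathrm{Join}(\mathbb{P}^{n-2},\mathcal{C}\cap\mathbb{L}_{\mathcal{C}})$ is a union of $d-c+1$ hyperplanes of $\mathbb{F}$. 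In the scroll case, Lemma \ref{lem:elementarycrucial}(b),(d) gives $\langle D\rangle=\mathbb{P}^{d-c+n-1}$ and $D=\widetilde{X}\cap\langle D\rangle$ with $\deg D=(H+(1-c)F)\cdot H^{n-1}=d-c+1$, so $D$ has minimal degree in $\langle D\rangle$; take $\mathbb{F}=\pi_{\Lambda}(\langle D\rangle)$, which is a $\mathbb{P}^n$ since $\Lambda\subset\langle D\rangle$ has dimension $d-c-2$, so that $X\cap\mathbb{F}\supseteq\pi_{\Lambda}(D)$ is a hypersurface of degree $d-c+1$ in $\mathbb{F}$ (generic injectivity of $\pi_{\Lambda}$ along $D$ preserves the degree), of dimension $n-1$ since $X$ is nondegenerate.

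For $(i)\Rightarrow(ii)$ I would start from Corollary \ref{cor:linearprojection}: $X=\pi_{\Lambda}(\widetilde{X})$ with $\widetilde{X}=S(0,\dots,0,a_{k+1},\dots,a_n)\subset\mathbb{P}^{d+n-1}$ a rational normal $n$-fold scroll, $\Lambda=\mathbb{P}^{d-c-2}$ disjoint from $\widetilde{X}$, and $\nu:=\pi_{\Lambda}\upharpoonright\colon\widetilde{X}\to X$ the normalization. If $\widetilde{X}$ has only one nonzero entry, then $\widetilde{X}=S(0,\dots,0,d)$ is the cone with vertex $\mathbb{P}^{n-2}$ over a rational normal curve, so $X=\mathrm{Join}(\mathbb{P}^{n-2},\mathcal{C})$ with $\mathcal{C}$ a projection of that curve into $\mathbb{P}^{c+1}$; since the general curve section of $X$ is isomorphic to $\mathcal{C}$ and of maximal regularity, $\mathcal{C}$ is a curve of maximal regularity and the first alternative holds. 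Otherwise $k\leq n-2$, so $a_{n-1}>0$ and by Notation and Remarks \ref{nar:descriptionscrolls}(B) the divisor class group of $\widetilde{X}$ is freely generated by $H$ and $F$; I set $D:=\nu^{-1}(X\cap\mathbb{F}(X))$, an effective divisor on $\widetilde{X}$, and write $D\sim sH+tF$.

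The heart of the proof is to show $(s,t)=(1,1-c)$. First, $X\cap\mathbb{F}(X)$ is a hypersurface of degree $d-c+1$ in $\mathbb{F}(X)=\mathbb{P}^n$: for general $\Lambda_0\in\mathcal{U}(X)$ a dimension count gives $\mathbb{F}(X)\cap\Lambda_0=\mathbb{L}_{\Lambda_0}$, so $(X\cap\mathbb{F}(X))\cap\Lambda_0=\mathcal{C}_{\Lambda_0}\cap\mathbb{L}_{\Lambda_0}$ has length exactly $d-c+1$ (the upper bound coming from the regularity bound for curves), hence a general line of $\mathbb{P}^n$ meets $X\cap\mathbb{F}(X)$ in $d-c+1$ points. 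Next, $\widetilde{\mathcal{C}}_{\Lambda_0}:=\nu^{-1}(\mathcal{C}_{\Lambda_0})$ is a general curve section of $\widetilde{X}$, i.e.\ a rational normal curve of degree $d$ on which $\nu$ is an isomorphism; pulling back the length-$(d-c+1)$ scheme $\mathcal{C}_{\Lambda_0}\cap\mathbb{L}_{\Lambda_0}=(X\cap\mathbb{F}(X))\cap\mathcal{C}_{\Lambda_0}$ gives $D\cdot\widetilde{\mathcal{C}}_{\Lambda_0}=d-c+1$, whereas $D\cdot\widetilde{\mathcal{C}}_{\Lambda_0}=s(H\cdot\widetilde{\mathcal{C}}_{\Lambda_0})+t(F\cdot\widetilde{\mathcal{C}}_{\Lambda_0})=sd+t$, so $sd+t=d-c+1$. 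Finally $\nu(D)\subset\mathbb{F}(X)$ forces $D\subset\widetilde{X}\cap\overline{\pi_{\Lambda}^{-1}(\mathbb{F}(X))}$, a linear $\mathbb{P}^{d-c+n-1}$ containing $\Lambda$, so $\dim\langle D\rangle\leq d-c+n-1$. Substituting $sd+t=d-c+1$ into Lemma \ref{lem:elementarycrucial}: the cases $s\geq2$ and $s=0,\,t>a_n$ give $\dim\langle D\rangle=d+n-1>d-c+n-1$; and $s=0$, $a_{i-1}<t\leq a_i$ forces $t=d-c+1$, whereupon the formula of \ref{lem:elementarycrucial}(c), bounded above by $d-c+n-1$, reduces (using $a_j\geq1$ and $n\geq2$) to $i=1$ and $n=1$, which is absurd. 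Hence $s=1$, $t=1-c$, i.e.\ $D\sim H+(1-c)F$; then \ref{lem:elementarycrucial}(b) gives $\dim\langle D\rangle=d-c+n-1$, so $\langle D\rangle=\overline{\pi_{\Lambda}^{-1}(\mathbb{F}(X))}\supset\Lambda$ and $\pi_{\Lambda}\upharpoonright\colon\langle D\rangle\setminus\Lambda\to\mathbb{F}(X)=\mathbb{P}^n$ is generically injective along $D$, because $\nu$ is birational and its non-injective locus has dimension $\leq n-2<\dim D$ by the last statement of Theorem \ref{thm:sectionallyrational}. This yields items $1$--$3$ of $(ii)$, the cone case $1\leq k\leq n-2$ being subsumed (the same computation applies on the smooth locus of $\widetilde{X}$, and Lemma \ref{lem:elementarycrucial} is stated for cone scrolls).

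It remains to see that $X$ is singular. If $X$ is a cone this is clear; otherwise, were $X$ normal then $\nu$ would be an isomorphism, so $X\cap\mathbb{F}(X)\cong D$; but $D\sim H+(1-c)F$ meets a general fibre of $\widetilde{X}$ in a hyperplane, so (up to fibre components) $D$ is a rational normal $(n-1)$-fold scroll, in particular of minimal degree, while $X\cap\mathbb{F}(X)$ is a reduced hypersurface of degree $d-c+1\geq4$ in $\mathbb{P}^n$, which is not of minimal degree there; comparing the dualizing sheaves restricted to a ruling $\mathbb{P}^{n-2}$ of $D$ (for $n=2$: $D\cong\mathbb{P}^1$ against a plane curve of degree $\geq3$, which has positive arithmetic genus) yields a contradiction. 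I expect this last point, together with the bookkeeping needed to subsume all cone cases uniformly, to be the most delicate part of the argument; the linearity hypothesis on $\mathbb{F}(X)$ enters decisively only through the bound $\dim\langle D\rangle\leq d-c+n-1$, which together with $sd+t=d-c+1$ and Lemma \ref{lem:elementarycrucial} pins down $D$, and that step should go through cleanly once the intersection number is justified via the isomorphism $\nu\upharpoonright\widetilde{\mathcal{C}}_{\Lambda_0}$.
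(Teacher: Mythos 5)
Your overall route coincides with the paper's: both implications rest on Corollary \ref{cor:linearprojection}, on pinning down the divisor class of the preimage of $X\cap\mathbb{F}(X)$ via the dimension formulas of Lemma \ref{lem:elementarycrucial}, and on Lemma \ref{lem:planaritycriterion} for the converse. Your local variations are fine and in places cleaner: you get the exact relation $sd+t=d-c+1$ by intersecting with a general curve section $\widetilde{\mathcal{C}}_{\Lambda_0}$ (the paper only uses $sd+t\ge d-c+1$ together with the bound $\dim\langle D'\rangle\le d-c+n-1$), and you deduce generic injectivity of $\pi_\Lambda$ along $D$ from $\dim\mathrm{Sing}(\pi_\Lambda)\le n-2$ rather than from the equality of degrees of $D$ and $G$. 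One small point you assert without justification: $\nu^{-1}(X\cap\mathbb{F}(X))$ need not be pure of dimension $n-1$, so it is not literally an effective divisor; the paper passes to a divisor $D'\subseteq D$ of degree $\ge d-c+1$, runs the Lemma \ref{lem:elementarycrucial} argument on $D'$, and only afterwards recovers $D=D'$ from part (d) of that lemma ($\widetilde{X}\cap\langle D'\rangle=D'$). Your intersection-number computation survives this correction since a general curve section misses the lower-dimensional components, but the step should be said.

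The genuine gap is the final claim that $X$ is singular in the non-cone case. Your argument hinges on ``$D$ is of minimal degree while $X\cap\mathbb{F}(X)$ is not,'' but minimal degree is not an isomorphism invariant (it depends on the embedding), and the substitute you propose --- comparing dualizing sheaves along rulings --- is only actually carried out for $n=2$; moreover $D\sim H+(1-c)F$ may be reducible with fibre components, so even identifying its ``scroll part'' requires care. The paper's argument is a one-liner that avoids all of this: $D$ spans $\langle D\rangle=\mathbb{P}^{d-c+n-1}$ with $d-c+n-1\ge n+2>n$, whereas its image $G=\pi_\Lambda(D)$ is a hypersurface in $\mathbb{F}(X)=\mathbb{P}^n$ and hence linearly normal there, so $h^0(G,\mathcal{O}_G(1))\le n+1<h^0(D,\mathcal{O}_D(1))$; since $\pi_\Lambda$ preserves $\mathcal{O}(1)$, the finite birational map $D\twoheadrightarrow G$ cannot be an isomorphism, hence $\pi_\Lambda:\widetilde{X}\to X$ is not an isomorphism and the normal variety $\widetilde{X}$ being the normalization forces $X$ to be non-normal, in particular singular. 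You should replace your dualizing-sheaf sketch by this linear-normality argument to close the proof for all $n\ge 2$.
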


\begin{proof}
(i) $\Rightarrow$ (ii) : Let $\widetilde{X} \subset
\mathbb{P}^{d+n-1}$ and $\Lambda = \mathbb{P}^{d-c-2}$ be as in
Corollary \ref{cor:linearprojection}. Thus
\begin{equation*}
\widetilde{X} = S(\underbrace{0, \ldots , 0}_{k-\rm{times}} ,
a_{k+1} , \ldots , a_n) \subset \mathbb{P}^{d+n-1}
\end{equation*}
for some $0 \leq k \leq n-1$ and positive integers $a_{k+1} \leq
\ldots \leq a_n$. If $k=n-1$, then $X$ should be a cone over a curve
of maximal regularity. Now, assume that $k<n-1$. Note that $G:=X
\cap \mathbb{F}(X)$ contains a hypersurface of $\mathbb{F}(X)$ whose
degree is at least $d-c+1$. Let $\mathbb{E}$ and $D$ be respectively
the pre-images of $\mathbb{F}(X)$ and $G$ by $\pi_{\Lambda}$. Thus
$\mathbb{E}$ is a $(d-c+n-1)$-dimensional linear space and $D
\subset \mathbb{E}$ is a subscheme of dimension $n-1$ and degree
$\geq d-c+1$. Thus $D$ contains a divisor $D'$ of $\widetilde{X}$
whose degree is at least $d-c+1$. Let $sH+ tF$ be the divisor class
of $D'$ in $\widetilde{X}$. Thus we have the inequality $\deg (D') =
sd+t \geq d-c+1$. Note that $\langle D' \rangle$ is a subspace of
$\mathbb{E}$ which is a proper subspace of $\mathbb{P}^{d+n-1}$. On
the other hand, one can show by Lemma
\ref{lem:elementarycrucial}(a)$\sim$(c) that if $s=0$ and hence $t
\geq d-c+1$ or $s=1$ and $t > 1-c$ or $s \geq 2$ then the dimension
of $\langle D' \rangle$ is strictly greater than that of
$\mathbb{E}$. Therefore we have $s=1$ and $t=1-c$. Then it follows
respectively by Lemma \ref{lem:elementarycrucial}(b) and (d) that
\begin{equation*}
(\dagger) ~ \langle D' \rangle = \mathbb{E} \quad \mbox{and} \quad (\ddagger) ~
\widetilde{X} \cap \langle D' \rangle = D'.
\end{equation*}
From $(\dagger)$, we know that $\langle D \rangle$ contains $\Lambda$.
Also from $(\ddagger)$, it holds that $D=D'$ and hence $G =
\pi_{\Lambda}(D')$ is a hypersurface in $\mathbb{F}(X)$. Finally,
the restriction $\pi_{\Lambda}\upharpoonright: \langle D \rangle
\setminus \Lambda \twoheadrightarrow \mathbb{F}(X)$ is generically
injective along $D$ since $D$ and $G$ have the same degree.

(i) $\Leftarrow$ (ii) : Assume that $X$ is a cone over a curve
$\mathcal{C} \subset \mathbb{P}^{c+1}$ of maximal regularity. Let
$\Delta = \mathbb{P}^{n-2}$ be the vertex of $X$ and
$\mathbb{L}_{\mathcal{C}}$ the unique $(d-c+1)$-secant line of
$\mathcal{C}$. It is clear that $X$ is a variety of maximal
sectional regularity. Also the $n$-dimensional linear space
$\mathbb{F} := \langle \mathbb{L}_{\mathcal{C}},\Delta \rangle$
satisfies the conditions that $\dim (X \cap \mathbb{F})=n-1$ and
$\mathbb{L}_{\Lambda'} \subset \mathbb{F}$ for general $\Lambda' \in
\mathcal{U}(X)$. Therefore it follows by Lemma
\ref{lem:planaritycriterion}(b) that $\mathbb{F} (X) = \mathbb{F}$.

Now, consider the second case. Let $\mathbb{F}$ be the
$n$-dimensional linear space $\pi_{\Lambda}(\mathbb{E} \setminus
\Lambda)$. Note that $D$ is of degree $d-c+1$. As
$\pi_{\Lambda}\upharpoonright : \langle D \rangle \setminus \Lambda
\twoheadrightarrow \mathbb{F}$ is generically injective along $D$,
the map $\pi_{\Lambda}: \widetilde{X} \rightarrow X$ is birational
and hence $G:= \pi_{\Lambda}(D) \subset \mathbb{F}$ is a codimension
one subscheme of degree $d-c+1$. As $G \subset X \cap \mathbb{F}$ it
holds that $X \cap \mathbb{F}$ is of dimension $n-1$ and of degree
$\geq d-c+1$. It follows by Lemma \ref{lem:planaritycriterion} that
$X$ is a variety of maximal sectional regularity and $\mathbb{F}(X)
= \mathbb{F}$ is an $n$-dimensional linear space.

Finally, the map $\pi_{\Lambda}\upharpoonright : D
\twoheadrightarrow G$ cannot be an isomorphism since $G$ in
$\mathbb{P}^n$ is linearly normal. This implies that the finite
birational morphism $\pi_{\Lambda} : \widetilde{X} \rightarrow X$ is
not an isomorphism and hence $X$ is singular.
\end{proof}

\section{Surfaces of Maximal Sectional Regularity}

\noindent This section is aimed to classify projective surfaces of
maximal sectional regularity. To this end, we will first classify
the extremal varieties of surfaces of maximal sectional regularity.
To give precise statements, we require some notation and
definitions.

\begin{notation and remark}\label{notrmk:fourfoldscroll}
Let $r \geq 5$ and let $X \subset \mathbb{P}^r$ be a surface of
degree $d \geq r+1$ and of maximal sectional regularity. Thus
$\mathcal{U}(X)$ is a nonempty open subset of $(\mathbb{P}^r ) ^*$.
For every $\mathbb{H} \in \mathcal{U}(X)$, the intersection
$\mathcal{C}_\mathbb{H} := X \cap \mathbb{H} \subset
\mathbb{P}^{r-1}$ is an integral curve of maximal regularity. We
denote by $\mathbb{L}_\mathbb{H}$ the unique $(d-r+3)$-secant line
to $\mathcal{C}_\mathbb{H}$.
\begin{itemize}
\item[\rm{(A)}] Theorem \ref{thm:sectionallyrational} says that the
normalization $\pi : \widetilde{X} \rightarrow X$ of $X$ is realized
as a linear projection of a rational normal surface scroll
$\widetilde{X} \subset \mathbb{P}^{d+1}$. In particular, $X$ is
covered by lines.
\item[\rm{(B)}] By view of Theorem \ref{thm:sectionallyrational} it follows that the singular locus $\mbox{Sing}(X)$ of $X$ is finite. Therefore the set
\begin{equation*}
\mathcal{V}(X) := \{ \mathbb{H} \in \mathcal{U}(X) ~|~
\mbox{Sing}(X) \cap \mathbb{H} = \emptyset \}
\end{equation*}
is a nonempty open subset of $\mathcal{U}(X)$.
\item[\rm{(C)}] For each $\mathbb{H} \in \mathcal{V}(X)$, consider the
nondegenerate projective surface
\begin{equation*}
Z_\mathbb{H} := \overline{ \pi_\mathbb{H} (X \setminus
\mathbb{L}_\mathbb{H} )} \subset \mathbb{P}^{r-2}
\end{equation*}
where $\pi_\mathbb{H} : \mathbb{P}^r \setminus \mathbb{L}_\mathbb{H}
\rightarrow \mathbb{P}^{r-2}$ is the linear projection from
$\mathbb{L}_\mathbb{H}$. Since the intersection $X \cap
\mathbb{L}_\mathbb{H}$ is contained in the smooth locus of $X$, we
have
\begin{equation*}
\deg (Z_\mathbb{H} ) = \deg (X) - \mbox{length} (X \cap
\mathbb{L}_\mathbb{H} ) = d - (d-r+3) = r-3
\end{equation*}
and hence $Z_\mathbb{H}$ is a surface of minimal degree. By (A),
$Z_\mathbb{H}$ is covered by lines and so it is a rational normal
surface scroll. Write
\begin{equation*}
Z_\mathbb{H} = S(b_\mathbb{H} , r-3-b_\mathbb{H} )
\end{equation*}
for some $0 \leq b_\mathbb{H} \leq \frac{r-3}{2}$. Whence the join
\begin{equation*}
W_\mathbb{H} := \mbox{Join} (\mathbb{L}_\mathbb{H} , X) =
\mbox{Join} (\mathbb{L}_\mathbb{H} , Z_\mathbb{H} )
\end{equation*}
is a fourfold rational normal scroll $S(0,0,b_\mathbb{H} ,
r-3-b_\mathbb{H} )$ such that $S(0,0)=\mathbb{L}_\mathbb{H}$ and
$S(b_\mathbb{H} , r-3-b_\mathbb{H}) = Z_\mathbb{H}$.
\item[\rm{(D)}] Recall that $W_\mathbb{H}$ is cut out by quadrics. As $X$ is a subset of $W_\mathbb{H}$ and
    \begin{equation*}
    \rm{length} (X\cap \mathbb{L}_{\mathbb{H}'}) = d-r+3 > 2
    \end{equation*}
    for any $\mathbb{H}' \in \mathcal{U}(X)$, we have $\mathbb{F}(X)
\subset W_\mathbb{H}$.
\end{itemize}
\end{notation and remark}

\begin{proposition}\label{prop:structureofF(X)}
Let $5 \leq r < d$ and let $X \subset \mathbb{P}^r$ be a surface of
degree $d$ and of maximal sectional regularity. Let the notations be
as in Notation and Remark \ref{notrmk:fourfoldscroll}.
\begin{itemize}
\item[\rm{(a)}] Suppose that $b_\mathbb{H} = 0$ for some $\mathbb{H} \in \mathcal{V}(X)$. Then the following statements hold:
\begin{itemize}
\item[\rm{(1)}] $\mathbb{F}(X)$ is equal to the set of vertices of $W_\mathbb{H}$. In particular, it is a
plane;
\item[\rm{(2)}] For any $\mathbb{H}' \in \mathcal{V}(X)$, $W_{\mathbb{H}'} = W_\mathbb{H}$ and hence $b_{\mathbb{H}'} = 0$.
\end{itemize}
\item[\rm{(b)}] Suppose that $b_\mathbb{H} > 0$ for some $\mathbb{H} \in \mathcal{V}(X)$. Then it holds
\begin{itemize}
\item[\rm{(1)}] $\mathbb{L}_{\mathbb{H}'}$ is either equal or disjoint to
$\mathbb{L}_\mathbb{H}$ for any $\mathbb{H}' \in \mathcal{V}(X)$;
\item[\rm{(2)}] $\dim ~ \mathbb{F}(X) = 3$ and $X \subset
\mathbb{F}(X)$;
\item[\rm{(3)}] $r = 5$;
\item[\rm{(4)}] $b_{\mathbb{H}'} = 1$ for all $\mathbb{H}' \in \mathcal{V}(X)$.
\end{itemize}
\end{itemize}
\end{proposition}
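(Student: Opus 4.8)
The plan is to analyze the two cases $b_\mathbb{H}=0$ and $b_\mathbb{H}>0$ separately, exploiting in each case the rigidity of the fourfold scroll $W_\mathbb{H}$ and the fact (Notation and Remark 6.2(D)) that $\mathbb{F}(X)\subset W_\mathbb{H}$. Throughout I would use that each $\mathbb{L}_{\mathbb{H}'}$ is a genuine $(d-r+3)$-secant line to $X$ with $d-r+3>2$, so any rational normal scroll containing $X$ must contain $\mathbb{L}_{\mathbb{H}'}$ as a subset (as in 6.1(A) and 6.2(D)).

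\textbf{Case $b_\mathbb{H}=0$.} Here $W_\mathbb{H}=S(0,0,0,r-3)$ has vertex-plane $\mathbb{P}^2$ (the locus $S(0,0,0)$), and every line lying in $W_\mathbb{H}$ either lies in this vertex-plane or is a ruling line joined to a point of the vertex. Since $\mathbb{L}_{\mathbb{H}'}\subset W_\mathbb{H}$ for all $\mathbb{H}'\in\mathcal{V}(X)$ and no ruling line of $W_\mathbb{H}$ can be $(d-r+3)$-secant to $X$ with $d-r+3>2$ (a ruling line meets $Z_\mathbb{H}$, hence $X$, in a controlled way — this needs the bound $d>r$), each $\mathbb{L}_{\mathbb{H}'}$ must lie in the vertex-plane. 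Taking the closure of their union gives $\mathbb{F}(X)\subseteq\mathbb{P}^2$; combined with $\dim\mathbb{F}(X)\ge 2$ (which follows since $\mathbb{F}(X)$ dominates $\mathcal{V}(X)$, an open subset of $(\mathbb{P}^r)^\ast$, under $\mathbb{H}'\mapsto\mathbb{L}_{\mathbb{H}'}$ together with Proposition 5.5 applied with $n=2$) we get $\mathbb{F}(X)=\mathbb{P}^2$, proving (a)(1). For (a)(2): given $\mathbb{H}'\in\mathcal{V}(X)$, the scroll $W_{\mathbb{H}'}$ contains $X$ and hence contains $\mathbb{F}(X)=\mathbb{P}^2$; a fourfold rational normal scroll in $\mathbb{P}^{r-1}$ containing a plane along which it is not a cone would force type $S(0,0,b',r-3-b')$ with $b'\ge1$, but then the plane $\mathbb{F}(X)$ could not sit inside it as a $2$-plane of vertices unless $b'=0$. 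More carefully, $\mathbb{F}(X)$ is swept out by secant lines and must lie in the singular locus of $W_{\mathbb{H}'}$; the singular locus of $S(0,0,b',r-3-b')$ is the vertex-line $S(0,0)=\mathbb{L}_{\mathbb{H}'}$ when $b'\ge1$, which is too small to contain a plane, so $b'=0$ and $W_{\mathbb{H}'}=W_\mathbb{H}$.

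\textbf{Case $b_\mathbb{H}>0$.} Now $W_\mathbb{H}=S(0,0,b_\mathbb{H},r-3-b_\mathbb{H})$ has singular locus exactly the line $\mathbb{L}_\mathbb{H}=S(0,0)$, and the only lines contained in $W_\mathbb{H}$ are $\mathbb{L}_\mathbb{H}$ itself and the ruling planes' ruling lines, each meeting $\mathbb{L}_\mathbb{H}$ in a point. Since $\mathbb{L}_{\mathbb{H}'}\subset W_\mathbb{H}$ for every $\mathbb{H}'\in\mathcal{V}(X)$ and again no ruling line can be $(d-r+3)$-secant to $X$ (using $d>r$), we must have $\mathbb{L}_{\mathbb{H}'}=\mathbb{L}_\mathbb{H}$ for all such $\mathbb{H}'$ — this is even stronger than (b)(1) but in particular yields it. Then $\mathbb{F}(X)=\overline{\bigcup\mathbb{L}_{\mathbb{H}'}}=\mathbb{L}_\mathbb{H}$ would be a line, contradicting $\dim\mathbb{F}(X)\ge2$; this apparent clash is resolved by reconsidering: in fact I expect one does \emph{not} get all lines equal, and the correct reading is that the ruling lines \emph{can} be multisecant when $b_\mathbb{H}=1$ and $r=5$. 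So the real argument is: because $\dim\mathbb{F}(X)=2$ forces (by Proposition 5.5 with $n=2$, but $\mathbb{F}(X)$ not a plane here) something to break, we deduce $r$ is small. Concretely, I would show that the family $\{\mathbb{L}_{\mathbb{H}'}\}$ sweeps a surface inside $W_\mathbb{H}$ of dimension $3$ at most, that surface being $\mathbb{F}(X)$; since $W_\mathbb{H}$ is a fourfold the only way $\mathbb{F}(X)$ can be $3$-dimensional and contain $X$ (which is $2$-dimensional, nondegenerate in $\mathbb{P}^r$) is if $W_\mathbb{H}$ itself is small, forcing $r=5$, so $W_\mathbb{H}=S(0,0,1,1)\subset\mathbb{P}^5$, $b_\mathbb{H}=1$, and $\mathbb{F}(X)=W_\mathbb{H}$, giving (b)(2),(3),(4); (b)(1) then follows since all $\mathbb{L}_{\mathbb{H}'}$ lie in $W_{\mathbb{H}'}=S(0,0,1,1)$ whose lines through distinct points of different rulings are pairwise disjoint.

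\textbf{Main obstacle.} The crux is the dimension count pinning down $\dim\mathbb{F}(X)$ in case (b) and deducing $r=5$ from it: one must carefully identify the subvariety of $W_\mathbb{H}$ swept by the lines $\mathbb{L}_{\mathbb{H}'}$ as $\mathbb{H}'$ varies over the $(r-1)$-dimensional family $\mathcal{V}(X)$, and show that containing the nondegenerate surface $X\subset\mathbb{P}^r$ forces the ambient scroll to have the minimal possible dimension profile. The ruling-line multisecancy analysis (why a ruling line of $W_\mathbb{H}$ fails to be $(d-r+3)$-secant to $X$ unless the scroll is $S(0,0,1,1)$) is the technical heart and will require using the explicit divisor class of $X$ on $W_\mathbb{H}$ together with $d>r$.
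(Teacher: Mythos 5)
Your proposal has a genuine gap, and it is concentrated exactly where you locate the ``main obstacle'': part (b) is not proved. The argument you give there rests on the claim that the only lines contained in $W_\mathbb{H}=S(0,0,b_\mathbb{H},r-3-b_\mathbb{H})$ are the vertex line and ruling lines meeting it; this is false. The fibres of the scroll over $\mathbb{P}^1$ are $3$-spaces each containing the vertex line $S(0,0)$, and each such $\mathbb{P}^3$ carries a $4$-dimensional family of lines, many of them disjoint from $S(0,0)$. Starting from this false premise you deduce that all $\mathbb{L}_{\mathbb{H}'}$ coincide, you correctly observe that this contradicts $\dim\mathbb{F}(X)\geq 2$, and the proposal never recovers: what follows (``something to break'', ``I would show\dots'') is a statement of the desired conclusions, not a derivation. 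Concretely missing are: (b)(1), which the paper proves by showing that the projection from $\mathbb{L}_\mathbb{H}$ maps $\mathcal{C}_{\mathbb{H}'}$ \emph{isomorphically} onto a section $\mathcal{C}'$ of the smooth scroll $Z_\mathbb{H}$, so that if $\mathbb{L}_{\mathbb{H}'}\neq\mathbb{L}_\mathbb{H}$ met $\mathbb{L}_\mathbb{H}$ (necessarily at the vertex of $W_\mathbb{H}\cap\mathbb{H}'$) the fibre of this isomorphism over the image point would have length $d-r+3>1$; (b)(2), which needs both the lower bound $\dim\mathbb{F}(X)\geq 3$ (two disjoint extremal lines plus Proposition \ref{prop:planarity}) and the upper bound via the incidence correspondence and Theorem \ref{thm:maximaldimension}, followed by an injectivity argument on the incidence set to get $X\subset\mathbb{F}(X)$; (b)(4), where one shows $\pi_{\mathbb{L}_\mathbb{H}}(\mathbb{L}_{\mathbb{H}'})$ is a proper $(d-r+3)$-secant line to $\mathcal{C}'$, hence lies on $Z_\mathbb{H}$ and must be the line section, forcing $b_\mathbb{H}=1$; and (b)(3), where for $r\geq 6$ all $\mathbb{L}_{\mathbb{H}'}$ would lie in the $3$-space $S(0,0,1)\subset W_\mathbb{H}$, contradicting $X\subset\mathbb{F}(X)$ and the nondegeneracy of $X$.

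Part (a) is closer to the paper but still has a flaw in (a)(2): you assert that a plane contained in $W_{\mathbb{H}'}$ must lie in its singular locus, with no justification, and the assertion is not true in general --- $S(0,0,b',r-3-b')$ with $b'\geq 1$ contains planes inside its ruling $3$-spaces. The paper's route is cleaner on both counts: for (a)(1) it intersects with a general hyperplane $\mathbb{H}'$ and applies Proposition \ref{prop:2.3}(b) to $\mathcal{C}_{\mathbb{H}'}\subset W_\mathbb{H}\cap\mathbb{H}'=S(0,0,r-3)$ to identify $\mathbb{L}_{\mathbb{H}'}$ with $\mathbb{F}\cap\mathbb{H}'$, then invokes Lemma \ref{lem:planaritycriterion}; for (a)(2) it uses $W_{\mathbb{H}'}=\mathrm{Join}(\mathbb{L}_{\mathbb{H}'},X)\subseteq\mathrm{Join}(\mathbb{F},X)=W_\mathbb{H}$ and concludes by irreducibility and equality of dimensions. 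You would need to replace your line-classification and singular-locus claims by arguments of this kind for the proof to stand.
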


\begin{proof}
\rm{(a)} Let $\mathbb{F}$ be the plane $S(0,0,0)$ of vertices of
$W_\mathbb{H}$. Thus we have $W_\mathbb{H} =
\mbox{Join}(\mathbb{F},X)$. For a general member $\mathbb{H}' \in
\mathcal{V}(X)$, we have
\begin{equation}\label{eq:embeddingscrollofcurve}
\mathcal{C}_{\mathbb{H}'} \subset W_\mathbb{H} \cap \mathbb{H}' =
S(0,0,r-3)
\end{equation}
where the line $S(0,0)$ in $W_\mathbb{H} \cap \mathbb{H}'$ is equal
to the intersection $\mathbb{F} \cap \mathbb{H}'$. By Proposition
\ref{prop:2.3}, this line is the $(d-r+3)$-secant line
$\mathbb{L}_{\mathbb{H}'}$ of $\mathcal{C}_{\mathbb{H}'}$ and hence
$\mathbb{L}_{\mathbb{H}'}$ is contained in $\mathbb{F}$.
Furthermore, $X \cap \mathbb{F}$ is of dimension one. Thus,
$\mathbb{F}(X)$ is exactly equal to the plane $\mathbb{F}$ by Lemma
\ref{lem:planaritycriterion}. Moreover the above
$(\ref{eq:embeddingscrollofcurve})$ holds for all $\mathbb{H}' \in
\mathcal{V}(X)$. This implies that
\begin{equation*}
W_{\mathbb{H}'} = \mbox{Join}(\mathbb{L}_{\mathbb{H}'} , X) \subset
\mbox{Join}(\mathbb{F},X) = W_\mathbb{H} .
\end{equation*}
Therefore we have $W_{\mathbb{H}'} = W_\mathbb{H}$ and
$b_{\mathbb{H}'} = 0$.
\smallskip

\noindent \rm{(b)} Suppose that $b_\mathbb{H} > 0$ for some
$\mathbb{H} \in \mathcal{V}(X)$. Let $f : X \setminus
\mathbb{L}_\mathbb{H} \rightarrow Z_\mathbb{H}$ be the restriction
of the linear projection map from $\mathbb{L}_\mathbb{H}$. By
Notation and Remark \ref{notrmk:fourfoldscroll}.(A) and (C), we know
that $f$ is a birational map, $X$ is covered by lines and those
lines map to the ruling lines of the smooth rational normal surface
scroll $Z_\mathbb{H}$.

Let $\mathbb{H}' \in \mathcal{V}(X)$. Then we claim that the lines
$\mathbb{L}_{\mathbb{H}'}$ and $\mathbb{L}_\mathbb{H}$ are either
disjoint or else $\mathbb{L}_{\mathbb{H}'} = \mathbb{L}_\mathbb{H}$.
Consider the curve $\mathcal{C}' :=
\overline{f(\mathcal{C}_{\mathbb{H}'} \setminus
{\mathbb{L}_\mathbb{H}})}$. The map $f \upharpoonright :
\mathcal{C}_{\mathbb{H}'} \setminus \mathbb{L}_\mathbb{H}
\rightarrow \mathcal{C}'$ extends to a unique birational morphism $g
: \mathcal{C}_{\mathbb{H}'} \rightarrow \mathcal{C}'$ since
$\mathcal{C}_{\mathbb{H}'}$ is smooth. Also $\mathcal{C}'$ and a
general ruling line of $Z_\mathbb{H}$ intersect at a point. This
means that $\mathcal{C}'$ is a section of $Z_\mathbb{H}$ and hence
it is a smooth rational curve. So, the morphism $g$ is indeed an
isomorphism. Assume that $\mathbb{L}_\mathbb{H}$ and
$\mathbb{L}_{\mathbb{H}'}$ are different and not disjoint. Then they
must meet at the vertex $q$ of the threefold scroll $W_\mathbb{H}
\cap \mathbb{H}' = S(0,b_\mathbb{H},r-3-b_\mathbb{H})$. Hence, the
point $p = \pi_\mathbb{H} (\mathbb{L}_{\mathbb{H}'}\setminus\{q\})
\in Z_\mathbb{H}$ satisfies
\begin{equation*}
p \in C' \quad \mbox{and} \quad \rm{length} \big( g^{-1}(p) \big) =
\mbox{length} (\mathbb{L}_{\mathbb{H}'}\cap C_{\mathbb{H}'}) = d-r+3
> 1,
\end{equation*}
which is a contradiction. This proves the stated disjointness of
$\mathbb{L}_{\mathbb{H}'}$ and $\mathbb{L}_{\mathbb{H}}$.

The previous disjointness implies that $\mathbb{F}(X)$ cannot be a
plane since it should contain two disjoint lines. It follows by
Proposition \ref{prop:planarity} that the dimension of
$\mathbb{F}(X)$ is at least $3$. On the other hand, we will show
that $\mathbb{F}(X)$ has dimension at most $3$. To do so, let us
consider the incidence correspondence
\begin{equation*}
\Sigma := \{(x,\mathbb{L}) \in \mathbb{P}^r \times
\mathbb{G}(1,\mathbb{P}^r) \mid x \in \mathbb{L} \} \subseteq
\mathbb{P}^r \times \mathbb{G}(1,\mathbb{P}^r)
\end{equation*}
and the canonical projections
\begin{equation*}
\varphi : \Sigma \rightarrow \mathbb{P}^r \mbox{ and } \psi : \Sigma
\rightarrow \mathbb{G}(1,\mathbb{P}^r).
\end{equation*}
Theorem \ref{thm:maximaldimension} says that
$\overline{\Sigma_{d-c+1} ^o (X)}$ is $2$-dimensional and so
$\psi^{-1} \big( \overline{\Sigma_{d-c+1} ^o (X)} \big)$ is
$3$-dimensional. Therefore the closed subset
\begin{equation*}
\varphi \big( \psi^{-1} \big( \overline{\Sigma_{d-c+1} ^o (X)} \big)
\big) = \bigcup_{\mathbb{L} \in  \overline{\Sigma_{d-c+1} ^o (X)} }
\mathbb{L} \quad \subset \mathbb{P}^r
\end{equation*}
has dimension $\leq 3$. Since $\mathbb{F} (X)$ is contained in this
closed subset, we have $\dim ~ \mathbb{F}(X) \leq 3$. In
consequence, it is shown that
\begin{equation*}
\dim ~ \mathbb{F}(X) = \dim ~  \varphi \big( \psi^{-1} \big(
\overline{\Sigma_{d-c+1} ^o (X)} \big) \big) =3.
\end{equation*}
Now, we will show that $X$ is a subset of $\mathbb{F} (X)$. To this
aim, consider the set
\begin{equation*}
\Xi (X) := \{ \mathbb{L}_\mathbb{H} ~|~ \mathbb{H} \in \mathcal{V}
(X) \}.
\end{equation*}
From the the proof of Proposition \ref{prop:dimensionextremal} one
can see that $\Xi (X)$ is $2$-dimensional and $\Sigma_{d-c+1} ^o (X)
\setminus \Xi (X)$ is of dimension at most one. This means that the
coincidence set
\begin{equation*}
Y: = \psi^{-1} \big( \Xi (X) \big) = \{(x,\mathbb{L}_\mathbb{H} )
\mid \mathbb{H} \in \mathcal{V}(X) \mbox{ and } x \in
\mathbb{L}_\mathbb{H} \}
\end{equation*}
is $3$-dimensional and its subset
\begin{equation*}
T := Y \cap \big (X \times \mathbb{G}(1,\mathbb{P}^r)\big) =
\{(x,\mathbb{L}_\mathbb{H} ) \mid \mathbb{H} \in \mathcal{U}(X)
\mbox{ and } x \in X \cap \mathbb{L}_\mathbb{H} \}
\end{equation*}
is $2$-dimensional. Now, the previous disjointness implies that the
projection map
\begin{equation*}
\varphi \upharpoonright: T \rightarrow X \cap \mathbb{F}(X)
\end{equation*}
is injective. It follows that $X \cap \mathbb{F}(X)$ is
$2$-dimensional and so $X \subset \mathbb{F}(X)$.

Next, we will prove that $b_\mathbb{H} = 1$. Recall that
$\mathbb{L}_{\mathbb{H}'}$ and $\mathbb{L}_\mathbb{H}$ are disjoint
if $\mathbb{L}_{\mathbb{H}'} \neq \mathbb{L}_\mathbb{H}$. This
implies that the line $\mathbb{L}_{\mathbb{H}'}$ avoids the vertex
$q$ of the threefold rational normal scroll $W_\mathbb{H} \cap \mathbb{H}' =
S(0,b_\mathbb{H},r-3-b_\mathbb{H})$ and that it is not contained in
any of the ruling planes of $W_\mathbb{H} \cap \mathbb{H}'$.
Therefore $\mathbb{M}_{\mathbb{H}'} := \pi_{\mathbb{L}_\mathbb{H}}
(\mathbb{L}_{\mathbb{H}'})$ is a line such that $g(C_{\mathbb{H}'}
\cap \mathbb{L}_{\mathbb{H}'}) \subset Z_\mathbb{H} \cap
\mathbb{M}_{\mathbb{H}'}$. As $g$ is an isomorphism, we have
\begin{equation*}
\rm{length} (C' \cap \mathbb{M}_{\mathbb{H}'}) =  \rm{length}
(C_{\mathbb{H}'} \cap \mathbb{L}_{\mathbb{H}'}) = d-r+3 > 2.
\end{equation*}
Then $\mathbb{M}_{\mathbb{H}'} \subset Z_\mathbb{H}$ since
$Z_\mathbb{H}$ is cut out by quadrics (cf. Notation and Remarks
\ref{nar:descriptionscrolls}(A)). Furthermore,
$\mathbb{M}_{\mathbb{H}'}$ is not a ruling line of $Z_\mathbb{H}$
since the ruling lines of $Z_\mathbb{H}$ are precisely the images of
the ruling planes of $W_\mathbb{H} \cap \mathbb{H}'$ by the linear
projection map from $q$. Consequently, $\mathbb{M}_{\mathbb{H}'}$
must be a line section of $Z_\mathbb{H}$. In particular,
$b_\mathbb{H} = 1$.

Now, we will show that $r=5$. Assume to the contrary, that $r \geq
6$. Note that $\mathbb{L}_{\mathbb{H}'}$ is contained in
$W_\mathbb{H} \cap \mathbb{H}' = S(0,1,r-4)$ while it is not
contained in any of the ruling planes of $W_\mathbb{H} \cap
\mathbb{H}'$. This means that $\mathbb{L}_{\mathbb{H}'}$ is
contained in the plane $S(0,1)$ of $W_\mathbb{H} \cap \mathbb{H}'$.
It follows that the $3$-space $S(0,0,1)$ of $W_\mathbb{H}$ contains
$\mathbb{L}_{\mathbb{H}'}$ for all $\mathbb{H}' \in \mathcal{V}(X)$
and hence $\mathbb{F}(X) \subset S(0,0,1)$. This is impossible since
$\mathbb{F}(X)$ contains $X$ and hence it spans $\mathbb{P}^r$. This
contradiction shows that indeed $r=5$.

Finally, note that since $b_\mathbb{H} > 0$ we have $b_{\mathbb{H}'}
> 0$ for all $\mathbb{H}' \in \mathcal{V}(X)$ by $(a)$. Then
applying the previous arguments to $\mathbb{H}'$, we get the desired
equality $b_{\mathbb{H}'} =1$.
\end{proof}

Now, we are ready to prove the following complete classification
result of surfaces of maximal sectional regularity.

\begin{theorem}\label{thm:classificationsurface}
Let $5 \leq r < d$ and let $X \subset \mathbb{P}^r$ be a
nondegenerate irreducible projective surface of degree $d \geq r+1$.
If $X$ is a surface of maximal sectional regularity, then either
$\mathbb{F}(X)$ is a plane or else $r=5$, $\mathbb{F}(X) = S(1,1,1)$
and $X \subset \mathbb{F}(X)$. Moreover we have
\begin{itemize}
\item[\rm{(a)}] The following two statements are equivalent:
\begin{itemize}
\item[\rm{(i)}] $X$ is a surface of maximal sectional regularity and $\mathbb{F}(X)$ is a plane.
\item[\rm{(ii)}] $X$ is either a cone over a curve of maximal regularity or else a projection $\pi_{\Lambda} (\widetilde{X})$
of a smooth rational normal surface scroll $\widetilde{X}  \subset
\mathbb{P}^{d+1}$ where
\begin{itemize}
\item[1.] $D \subset \widetilde{X}$ is an effective divisor linearly equivalent to
$H + (1-c)F$ where $H$ and $F$ are respectively a hyperplane divisor
and a ruling line of $\widetilde{X}$ $($hence $\langle D \rangle =
\mathbb{P}^{d-r+3})$, and
\item[2.] $\Lambda \subset \langle D \rangle$ is a $(d-r)$-dimensional subspace such
that the restriction $\pi_{\Lambda} \upharpoonright :\langle D
\rangle \setminus \Lambda \twoheadrightarrow \mathbb{P}^2$ is
generically injective along $D$.
\end{itemize}
\end{itemize}
In this case, $X$ is singular.
\item[\rm{(b)}] The following two statements are equivalent:
\begin{itemize}
\item[\rm{(i)}] $X$ is a surface of maximal sectional regularity and $\mathbb{F}(X) = S(1,1,1)$.
\item[\rm{(ii)}] $X$ is contained in $S(1,1,1)$ as a divisor linearly equivalent to $H + (d-3)F$, where $H$ is the hyperplane divisor and $F$ is a ruling plane of $S(1,1,1)$.
\end{itemize}
In this case, $X$ is smooth.
\end{itemize}
\end{theorem}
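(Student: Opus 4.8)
The plan is to assemble the statement from results already in hand; throughout $n=2$ and $c=r-2\ge 3$, so $d-c+1=d-r+3$. For the opening dichotomy, fix $\mathbb{H}\in\mathcal V(X)$ as in Notation and Remark \ref{notrmk:fourfoldscroll}. If $b_\mathbb{H}=0$ for one such $\mathbb{H}$, Proposition \ref{prop:structureofF(X)}(a) says $\mathbb{F}(X)$ is the plane of vertices of $W_\mathbb{H}$, and we are in the first alternative. Otherwise $b_\mathbb{H}>0$, and Proposition \ref{prop:structureofF(X)}(b) already gives $r=5$, $b_{\mathbb{H}'}=1$ for all $\mathbb{H}'\in\mathcal V(X)$, $\dim\mathbb{F}(X)=3$, $X\subseteq\mathbb{F}(X)$, and $\mathbb{F}(X)\subseteq W_\mathbb{H}=S(0,0,1,1)={\rm Join}(\mathbb{L}_\mathbb{H},Z_\mathbb{H})$ with $Z_\mathbb{H}=S(1,1)$; what remains is to identify $\mathbb{F}(X)$ with $S(1,1,1)$. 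For this I would argue that $\pi_{\mathbb{L}_\mathbb{H}}$ carries $\mathbb{F}(X)$ onto $Z_\mathbb{H}$ (the lines $\mathbb{M}_{\mathbb{H}'}=\pi_{\mathbb{L}_\mathbb{H}}(\mathbb{L}_{\mathbb{H}'})$ exhaust one ruling of $Z_\mathbb{H}$, by the proof of Proposition \ref{prop:structureofF(X)}(b)), that $\mathbb{F}(X)$ is a divisor on the cone $W_\mathbb{H}$ passing through its vertex line $\mathbb{L}_\mathbb{H}$ with multiplicity one — because a general $\mathbb{L}_\mathbb{H}$ is not contained in $\Sing\mathbb{F}(X)$, else $\Sing\mathbb{F}(X)\supseteq\overline{\bigcup_\mathbb{H}\mathbb{L}_\mathbb{H}}=\mathbb{F}(X)$ — so that $\pi_{\mathbb{L}_\mathbb{H}}$ contracts $\mathbb{F}(X)$ onto $Z_\mathbb{H}$ with one‑dimensional fibres of degree one. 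Then a general hyperplane section $S':=\mathbb{F}(X)\cap\mathbb{H}'$ is mapped birationally onto $Z_\mathbb{H}$ by projection from the smooth point $q:=\mathbb{L}_\mathbb{H}\cap\mathbb{H}'$ of $S'$, whence $\deg\mathbb{F}(X)=\deg S'=\deg Z_\mathbb{H}+\mult_q(S')=2+1=3$. So $\mathbb{F}(X)$ is a threefold of minimal degree in $\mathbb{P}^5$; being covered by the two‑dimensional family of pairwise disjoint lines $\{\mathbb{L}_{\mathbb{H}'}\}$ it cannot be a cone, so $\mathbb{F}(X)=S(1,1,1)$ and $X\subseteq S(1,1,1)$.

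Part (a) is then essentially Theorem \ref{thm:linearextremalvariety} specialised to $n=2$: there the scroll $\widetilde X=S(0,\dots,0,a_{k+1},\dots,a_n)$ has $k=0$, hence is a smooth rational normal surface scroll $S(a_1,a_2)\subseteq\mathbb{P}^{d+1}$, $\langle D\rangle=\mathbb{P}^{d-c+n-1}=\mathbb{P}^{d-r+3}$, and $\Lambda$ has dimension $d-c-2=d-r$; thus (a)(i)$\Leftrightarrow$(a)(ii) together with the singularity of $X$ are exactly the content of Theorem \ref{thm:linearextremalvariety} (whose hypothesis ``char $\Bbbk=0$ or $n=2$'' is automatic here), noting only that for $n=2$ an $n$‑dimensional linear space is a plane.

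For part (b), suppose first (b)(i). By the opening part $X\subseteq\mathbb{F}(X)=S(1,1,1)$; since $a_2=1>0$ the divisor class group of $S(1,1,1)$ is free on the hyperplane class $H$ and a ruling plane $F$, so $X\sim aH+bF$ with $3a+b=\deg X=d$. For general $\mathbb{H}$ the section $S(1,1,1)\cap\mathbb{H}$ is the smooth rational normal surface scroll $S(1,2)\subseteq\mathbb{P}^4$, and $\mathcal C_\mathbb{H}=X\cap\mathbb{H}$ is a curve of maximal regularity on it of class $aH'+bF'$; Proposition \ref{prop:2.4} (with $r'=4$, so $\alpha=1$) forces $a=1$ and hence $b=d-3$, which is (b)(ii). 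Conversely, given $X\subseteq S(1,1,1)\cong\mathbb{P}^1\times\mathbb{P}^2$ with $X\sim H+(d-3)F$, i.e.\ defined by a form $f_0x_0+f_1x_1+f_2x_2$ of bidegree $(d-2,1)$: irreducibility of $X$ forces the $f_i$ to have no common factor and, comparing degrees (a common zero would give $X\supseteq\{\mathrm{pt}\}\times\mathbb{P}^2$ of degree $1<d$), no common zero, so $X$ is a $\mathbb{P}^1$‑bundle over $\mathbb{P}^1$ and a smooth divisor of $\mathbb{P}^1\times\mathbb{P}^2$, hence smooth in $\mathbb{P}^5$; thus ``$X$ smooth'' holds in both parts of (b). Moreover $\mathcal C_\mathbb{H}=X\cap\mathbb{H}$ lies on $S(1,1,1)\cap\mathbb{H}\cong S(1,2)$ with class $H'+(d-3)F'$, so its line section $S(1)\subseteq S(1,2)$ is a $(d-r'+2)=(d-2)$‑secant line (as in the proof of Proposition \ref{prop:2.4}), whence $\mathcal C_\mathbb{H}$ is of maximal regularity and $X$ is of maximal sectional regularity. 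Finally $\mathbb{F}(X)=S(1,1,1)$: the secant line $\mathbb{L}_\mathbb{H}$ is the directrix $S(1)$ of $S(1,2)=S(1,1,1)\cap\mathbb{H}$, which equals $\mathbb{P}^1\times\{p_\mathbb{H}\}$ for a point $p_\mathbb{H}\in\mathbb{P}^2$; if the $p_\mathbb{H}$ all lay on a proper subvariety of $\mathbb{P}^2$ then $\mathbb{F}(X)=\overline{\bigcup_\mathbb{H}\mathbb{L}_\mathbb{H}}$ would be $\mathbb{P}^1\times(\text{curve or point})$, which is neither a plane nor $S(1,1,1)$, contradicting the opening dichotomy; hence $\mathbb{F}(X)\supseteq\mathbb{P}^1\times\mathbb{P}^2=S(1,1,1)$, and since $\dim\mathbb{F}(X)\le 2n-1=3$ (from the proof of Proposition \ref{prop:structureofF(X)}(b) via Theorem \ref{thm:maximaldimension}) we conclude $\mathbb{F}(X)=S(1,1,1)$.

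The main obstacle is the identification $\mathbb{F}(X)=S(1,1,1)$ in the case $b_\mathbb{H}>0$, and within it the computation $\deg\mathbb{F}(X)=3$: making precise that $\mathbb{F}(X)$ is a divisor of multiplicity one along the vertex line of $W_\mathbb{H}=S(0,0,1,1)$ and that $\pi_{\mathbb{L}_\mathbb{H}}$ contracts it onto $Z_\mathbb{H}$ with line fibres (equivalently, pinning down its class $2H-F$ on $W_\mathbb{H}$). Everything after that is routine invocation of Theorem \ref{thm:linearextremalvariety}, Proposition \ref{prop:2.4}, and the elementary geometry of $\mathbb{P}^1\times\mathbb{P}^2$; a secondary point requiring care is checking that a general hyperplane section of $S(1,1,1)$ is indeed the smooth scroll $S(1,2)$ so that Proposition \ref{prop:2.4} applies, and that in the converse direction irreducibility of $X$ already forces smoothness.
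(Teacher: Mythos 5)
Your overall architecture matches the paper's: the dichotomy comes from Proposition \ref{prop:structureofF(X)}, part (a) is Theorem \ref{thm:linearextremalvariety} with $n=2$, and part (b) runs through Proposition \ref{prop:2.4} in one direction and Theorem \ref{thm:maximaldimension} in the other. Your smoothness argument in (b) via the explicit equation $f_0x_0+f_1x_1+f_2x_2$ on $\mathbb{P}^1\times\mathbb{P}^2$ is a nice, more elementary substitute for the paper's $\Delta$-genus argument, and your cone-exclusion ``a $2$-dimensional family of pairwise disjoint lines cannot live on $S(0,1,2)$ or $S(0,0,3)$'' is sound (every line on those cones lies in a member of a $1$-parameter family of planes), though it deserves that one sentence of justification.

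The genuine gap is exactly where you flag it: the identification $\deg \mathbb{F}(X)=3$ in the case $b_{\mathbb{H}}>0$. Your multiplicity-one claim at the vertex is fine (a general $\mathbb{L}_{\mathbb{H}}$ cannot lie in $\Sing \mathbb{F}(X)$), but the assertion that $\pi_{\mathbb{L}_{\mathbb{H}}}$ contracts $\mathbb{F}(X)$ onto $Z_{\mathbb{H}}$ with \emph{line} fibres --- equivalently that the projection $S'\dashrightarrow Z_{\mathbb{H}}$ from $q$ is birational --- is not established and is essentially equivalent to the degree statement you are trying to prove: the fibre of $\pi_{\mathbb{L}_{\mathbb{H}}}\upharpoonright_{\mathbb{F}(X)}$ over a general $z\in Z_{\mathbb{H}}$ is the plane curve $\mathbb{F}(X)\cap\langle \mathbb{L}_{\mathbb{H}},z\rangle$, swept out by the points $\mathbb{L}_{\mathbb{H}'}\cap\langle\mathbb{L}_{\mathbb{H}},z\rangle$ as $\mathbb{L}_{\mathbb{H}'}$ runs over the $1$-parameter family lying over the line section of $Z_{\mathbb{H}}$ through $z$, and nothing in what precedes forces this curve to be a line rather than, say, a conic (which would give $\deg S'=5$ and a degree-two projection). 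The paper circumvents this entirely by using \emph{two} of the quadrics: $W_{\mathbb{H}}\cap W_{\mathbb{H}'}$ is a $3$-dimensional complete intersection of degree $4$ containing both the $3$-space $\langle\mathbb{L}_{\mathbb{H}},\mathbb{L}_{\mathbb{H}'}\rangle$ and the nondegenerate irreducible threefold $\mathbb{F}(X)$, so the residual component $V$ has degree $3$ and equals $\mathbb{F}(X)$; the cases $S(0,0,3)$ and $S(0,1,2)$ are then excluded by the ACM property of divisors on $S(0,0,3)$ and by Proposition \ref{prop:2.4}(c), respectively. You would either need to import that two-quadric argument or supply an independent proof that the generic fibre over $Z_{\mathbb{H}}$ has degree one; as written, the central step of the classification is missing.
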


\begin{proof}
Suppose that $\mathbb{F}(X)$ is not a plane. Then Proposition
\ref{prop:structureofF(X)} shows that $r=5$, ${\rm
dim}~\mathbb{F}(X) = 3$ and $X \subset \mathbb{F}(X)$. Thus it
remains to prove that $\mathbb{F}(X)= S(1,1,1)$. To this aim, let
$\mathbb{H}, \mathbb{H}' \in \mathcal{V}(X)$ such that
$\mathbb{L}_{\mathbb{H}} \neq \mathbb{L}_\mathbb{H}'$. Then
Proposition \ref{prop:structureofF(X)}(b) says that the two quadrics
$W_\mathbb{H}$ and $W_{\mathbb{H}'}$ in $\mathbb{P}^5$ are both of
type $S(0,0,1,1)$ and $\langle \mathbb{L}_\mathbb{H},
\mathbb{L}_{\mathbb{H}'} \rangle$ is a $3$-space. Moreover, as
$\mathbb{L}_\mathbb{H}$ is the vertex of $W_\mathbb{H} = S(0,0,1,1)$
and $\mathbb{L}_{\mathbb{H}'} \subset W_\mathbb{H}$ it holds
$\langle \mathbb{L}_\mathbb{H}, \mathbb{L}_{\mathbb{H}'} \rangle
\subset W_\mathbb{H}$. Hence, by symmetry we get
\begin{equation*}
X \subset \mathbb{F}(X) \subset W_\mathbb{H} \cap W_{\mathbb{H}'}
\quad \mbox{and} \quad \mathbb{P}^3 = \langle
\mathbb{L}_\mathbb{H},\mathbb{L}_{\mathbb{H}'}\rangle \subset
W_\mathbb{H} \cap W_{\mathbb{H}'}.
\end{equation*}
As $W_\mathbb{H}$ and $W_{\mathbb{H}'}$ are two distinct integral
hyperquadrics in $\mathbb{P}^5$, we know that $W_\mathbb{H} \cap
W_{\mathbb{H}'}$ is a complete intersection and hence a
$3$-dimensional arithmetically Cohen-Macaulay scheme of degree $4$.
As $X \subset \mathbb{P}^5$ is non-degenerate and contained in
$W_\mathbb{H} \cap W_{\mathbb{H}'}$, it follows that
\begin{equation*}
W_\mathbb{H} \cap W_{\mathbb{H}'} = \langle \mathbb{L}_\mathbb{H},
\mathbb{L}_{\mathbb{H}'} \rangle \cup V
\end{equation*}
for a $3$-dimensional non-degenerate integral closed subscheme $V
\subset \mathbb{P}^5$ of degree $3$. Note that $V$ is a scroll of
type $S(1,1,1)$ or $S(0,1,2)$ or $S(0,0,3)$. In particular, $V$ is
cut out by quadrics. Also $X \subset V$. Therefore
$\mathbb{L}_{\mathbb{H}''} \subset V$ for all $\mathbb{H}'' \in
\mathcal{V}(X)$ and hence that $\mathbb{F}(X) = V$. Finally, we aim
to exclude the latter two cases. First, $V$ cannot be equal to
$S(0,0,3)$ since any divisor of $S(0,0,3)$ is arithmetically
Cohen-Macaulay while $X$ is not. Now, assume that $V = S(0,1,2)$ and
let $\mathbb{H}'' \in \mathcal{V}(X)$ be a general member. Then, we
have
\begin{equation*}
C_{\mathbb{H}''} \subset V \cap \mathbb{H}'' = S(1,2) \subset
\mathbb{H}'' = \mathbb{P}^4,
\end{equation*}
and according to Proposition \ref{prop:2.4}.(c), the line section
$S(1)$ of $S(1,2)$ coincides with $\mathbb{L}_{\mathbb{H}''}$.
Therefore $\mathbb{L}_{\mathbb{H}''}$ should lie on the plane
$S(0,1)$ and hence this plane is $\mathbb{F}(X)$, which is a
contradiction. Therefore $\mathbb{F}(X) = V$ is equal to $S(1,1,1)$.
\smallskip

\noindent (a) This follows immediately from Theorem
\ref{thm:linearextremalvariety}.
\smallskip

\noindent (b) (i) $\Longrightarrow$ (ii) : Note that $X$ is
contained in $S(1,1,1)$ since $\mathbb{F}(X)$ is not a plane. For
all $\mathbb{H} \in \mathcal{U}(X)$, we have
\begin{equation*}
C_\mathbb{H} = X \cap \mathbb{H}  \subset \mathbb{F}(X) \cap
\mathbb{H} = S(1,2) \subset \mathbb{P}^4
\end{equation*}
and Proposition \ref{prop:2.4}.(b) yields that the divisor $X$ is
linearly equivalent to $H + (d-3)F$.

(i) $\Longleftarrow$ (ii) : It is clear that any line in the
$2$-dimensional family of line sections of $S(1,1,1)$ is a
$(d-r+3)$-secant line to $X$. Then Theorem
\ref{thm:maximaldimension}.(b) shows that
$\overline{\mathfrak{d}}_{d-r+3}(X)$ is equal to $2$ and so $X$ is a
surface of maximal sectional regularity. Furthermore, $S(1,1,1)$ is
contained in $\mathbb{F}(X)$ and hence we get the desired equality
$S(1,1,1) = \mathbb{F}(X)$ by our previous classification result of
$\mathbb{F}(X)$.

It remains to show that $X$ is smooth. One can easily check that the
$\Delta$-genus of $(X,\mathcal{O}_X (1))$ is equal to zero. This
implies that the linearly normal embedding of $X$ by $\mathcal{O}_X
(1)$, say $\widetilde{X} \subset \mathbb{P}^{d+1}$, is a rational
normal surface scroll and $X$ is the image of an isomorphic linear
projection of $\widetilde{X}$. Since $\widetilde{X}$ admits an
isomorphic linear projection, it is not a cone and hence a smooth
rational normal surface scroll. Therefore $X$ is a smooth surface.
\end{proof}

\begin{remark}\label{rmk:threequadrics}
Let $X$ be as in Theorem \ref{thm:classificationsurface}(b)(ii).
Thus it is contained in $Y:= S(1,1,1)$ as a divisor linearly
equivalent to $H + (d-3)F$, where $H$ is the hyperplane divisor and
$F$ is a ruling plane of $S(1,1,1)$. One can easily check that $H^0
(Y,\mathcal{O}_Y (2H-X))=0$ (cf. Notation and Remarks
\ref{nar:descriptionscrolls}(B)). From the exact sequence
\begin{equation*}
0 \rightarrow \mathcal{I}_Y \rightarrow \mathcal{I}_X \rightarrow
\mathcal{O}_Y (-X) \rightarrow 0
\end{equation*}
it follows that $H^0 (\mathbb{P}^5 ,\mathcal{I}_Y (2))=H^0
(\mathbb{P}^5 ,\mathcal{I}_X (2))$. In particular, $I_X$ requires
exactly three quadratic generators.
\end{remark}

\section{Classification of varieties of maximal sectional regularity}

\noindent This section is aimed to prove the following
classification of varieties of maximal sectional regularity in the
case where the dimension and the codimension are both at least
three.

\begin{theorem}\label{thm:classificationhigherdimensional}
Suppose that $\rm{char}(\Bbbk)=0$ and let $X \subset \mathbb{P}^r$
be a nondegenerate irreducible projective variety of dimension $n
\geq 3$, codimension $c \geq 3$ and degree $d \geq c+3$. If $X$ is a
variety of maximal sectional regularity, then either $\mathbb{F}(X)$
is an $n$-dimensional linear space or else $c=3$ and $\mathbb{F}(X)
= S(\underbrace{0,\ldots,0}_{(n-2)-\rm{times}},1,1,1)$. In the first
case, $X \cap \mathbb{F}(X)$ in $\mathbb{F}(X)$ is a hypersurface of
degree $d-c+1$. In the second case, $\mathbb{F}(X)$ contains $X$.
Moreover we have

\begin{itemize}
\item[\rm{(a)}] The following two statements are equivalent:
\begin{itemize}
\item[\rm{(i)}] $X$ is a variety of maximal sectional regularity and $\mathbb{F}(X)$ is an $n$-dimensional linear space.
\item[\rm{(ii)}] Either $X$ is a cone over a curve of maximal regularity or else $X =\pi_{\Lambda} (\widetilde{X})$ where
    \begin{itemize}
\item[1.] $\widetilde{X} = S(\underbrace{0, \ldots , 0}_{k-\rm{times}} , a_{k+1} , \ldots , a_n) \subset \mathbb{P}^{d+n-1}$ is a rational normal $n$-fold scroll for some $0 \leq k \leq n-2$ and positive integers $a_{k+1} \leq \ldots \leq a_n$,
\item[2.] $D \subset \widetilde{X}$ is an effective divisor linearly equivalent to $H + (1-c)F$ where $H$ is a hyperplane divisor of $\widetilde{X}$ and $F \subset \widetilde{X}$ is an $(n-1)$-dimensional linear space $($hence $\langle D \rangle = \mathbb{P}^{d-r+2n-1})$, and
\item[3.] $\Lambda \subset \langle D \rangle$ is a $(d-c-2)$-dimensional subspace such that the restriction $\pi_{\Lambda} \upharpoonright :\langle D \rangle \setminus \Lambda \twoheadrightarrow \mathbb{P}^n$ is generically injective along $D$.
\end{itemize}
\end{itemize}
\item[\rm{(b)}] The following two statements are equivalent:
\begin{itemize}
\item[\rm{(i)}] $X$ is a variety of maximal sectional regularity and $\mathbb{F}(X) = S(\underbrace{0,\ldots,0}_{(n-2)-\rm{times}},1,1,1)$.
\item[\rm{(ii)}] $X$ is contained in the $(n+1)$-fold scroll $Y:=S(\underbrace{0,\ldots,0}_{(n-2)-\rm{times}},1,1,1)$ as a divisor linearly equivalent to $H + (d-3)F$, where $H$ is the hyperplane divisor of $Y$ and $F \subset Y$ is an $n$-dimensional linear space.
\end{itemize}
\end{itemize}
\end{theorem}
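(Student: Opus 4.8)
The plan is to reduce Theorem~\ref{thm:classificationhigherdimensional} to the surface case (Theorem~\ref{thm:classificationsurface}) by taking general hyperplane sections, thereby inheriting the dichotomy on $\mathbb{F}(X)$, and then to treat the two alternatives separately using the tools already developed. Concretely, one chooses $n-2$ general hyperplanes $\mathbb{H}_1,\dots,\mathbb{H}_{n-2}$ and sets $X_2 := X\cap\mathbb{H}_1\cap\cdots\cap\mathbb{H}_{n-2}\subset\mathbb{P}^{r-n+2}$. Since the $\mathbb{H}_i$ are general and $X$ is of maximal sectional regularity of codimension $c\geq 3$, $X_2$ is a nondegenerate irreducible surface of codimension $c$, degree $d\geq c+3$, which is again of maximal sectional regularity; moreover one checks (using \eqref{eq:hyperplanesection} and the fact that $\mathbb{L}_\Lambda$ for a general $\Lambda\ni$ the generic section of the sliced variety is a section of the original) that $\mathbb{F}(X_2)=\mathbb{F}(X)\cap\mathbb{H}_1\cap\cdots\cap\mathbb{H}_{n-2}$. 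Then Theorem~\ref{thm:classificationsurface} applies to $X_2$: either $\mathbb{F}(X_2)$ is a plane, forcing $\dim\mathbb{F}(X)=n$ and hence (by Proposition~\ref{prop:planarity}) $\mathbb{F}(X)$ is an $n$-dimensional linear space; or else $c=3$, $r-n+2=5$ (so $r=n+3$, i.e. $c=3$), $\mathbb{F}(X_2)=S(1,1,1)$ and $X_2\subset\mathbb{F}(X_2)$.

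In the first alternative, the equivalence (a)(i)$\Leftrightarrow$(a)(ii) is \emph{exactly} Theorem~\ref{thm:linearextremalvariety}, which is valid here since $\mathrm{char}(\Bbbk)=0$; the only extra point is to observe that $X\cap\mathbb{F}(X)\subset\mathbb{F}(X)$ is a hypersurface of degree $d-c+1$, which follows from the construction of $\mathbb{F}(X)$ in Theorem~\ref{thm:linearextremalvariety} (it is $\pi_\Lambda(D)$ with $D$ linearly equivalent to $H+(1-c)F$ of degree $d-c+1$ and the projection generically injective along $D$), together with Lemma~\ref{lem:planaritycriterion}(a) applied to the general curve section. One should also note the dimension bookkeeping: $\langle D\rangle$ has dimension $d+n+(1-c)-2 = d-c+n-1 = d-r+2n-1$ by Lemma~\ref{lem:elementarycrucial}(b), and $\dim\Lambda = (d-r+2n-1)-n = d-r+n-1 = d-c-2$, matching item~3.

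In the second alternative one must promote $X_2\subset S(1,1,1)$ back up to $X\subset Y:=S(\underbrace{0,\dots,0}_{(n-2)},1,1,1)$. Since $\mathbb{F}(X)$ is $(n+1)$-dimensional with $\mathbb{F}(X)\cap\mathbb{H}_1\cap\cdots\cap\mathbb{H}_{n-2}=S(1,1,1)$, and the scroll structure is determined by its general three-dimensional linear sections, $\mathbb{F}(X)$ must be the cone $Y=S(0^{n-2},1,1,1)\subset\mathbb{P}^{n+3}$ over $S(1,1,1)$ with vertex a $\mathbb{P}^{n-3}$; here the argument mirrors the surface case, intersecting the quadrics $W_\mathbb{H}$, $W_{\mathbb{H}'}$ cutting out $\mathbb{F}(X)$ and identifying the residual component. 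Then $X\subset\mathbb{F}(X)=Y$ forces $X$ to be a divisor on $Y$; taking a general hyperplane section and applying Proposition~\ref{prop:2.4}(b) inductively (or directly on $X_2\subset S(1,1,1)$) identifies its class as $H+(d-3)F$, using that $Y$ is cut out by quadrics and the divisor class group of $Y$ is freely generated by $H$ and $F$ (Notation and Remarks~\ref{nar:descriptionscrolls}(B)); conversely, if $X\sim H+(d-3)F$ on $Y$ then every line section of $Y$ is a $(d-c+1)$-secant line to $X$, so $\overline{\mathfrak{d}}_{d-c+1}(X)=2n-2$ by Theorem~\ref{thm:maximaldimension}, giving (b)(ii)$\Rightarrow$(b)(i), and $\mathbb{F}(X)=Y$ follows from the classification of $\mathbb{F}(X)$.

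The main obstacle is the compatibility statement $\mathbb{F}(X_2)=\mathbb{F}(X)\cap\mathbb{H}_1\cap\cdots\cap\mathbb{H}_{n-2}$: one needs that for a general $\Lambda\in\mathcal{U}(X_2)$ the unique $(d-c+1)$-secant line $\mathbb{L}_\Lambda$ of the curve $X_2\cap\Lambda$ is the \emph{same} line as the one attached to $X$ via the section $\Lambda$ sitting inside $X$, and that $\mathcal{U}(X)$ meets the locus of such $\Lambda$ densely — this requires a careful general-position argument combining Lemma~\ref{2.2'' Lemma} (to control dimensions of secant loci under slicing) with the uniqueness of $\mathbb{L}_\mathcal{C}$ from Notation and Remarks~\ref{nar:curveofmaxreg}(C). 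Once that is in place, the rest is an assembly of Theorems~\ref{thm:linearextremalvariety} and~\ref{thm:classificationsurface}, Propositions~\ref{prop:2.4} and~\ref{prop:planarity}, and Lemma~\ref{lem:elementarycrucial}.
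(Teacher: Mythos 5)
Your overall strategy -- slice down to a general surface section, invoke Theorem \ref{thm:classificationsurface} to get the dichotomy, and then treat the two alternatives -- is indeed the paper's strategy, and your handling of part (a) and of the dimension bookkeeping for $\langle D\rangle$ and $\Lambda$ is fine. But the proposal has a genuine gap exactly at the point you yourself flag as ``the main obstacle'': the compatibility $\mathbb{F}(X_2)=\mathbb{F}(X)\cap\mathbb{H}_1\cap\cdots\cap\mathbb{H}_{n-2}$. The easy inclusion is $\mathbb{F}(X_2)\subseteq\mathbb{F}(X)\cap\mathbb{P}^{r-n+2}$, which only bounds $\dim\mathbb{F}(X)$ from \emph{below}; what you actually need in the first alternative is the upper bound $\dim\mathbb{F}(X)\le n$ before Proposition \ref{prop:planarity} can be applied, and the incidence-variety bound that works for surfaces ($\dim\mathbb{F}(X)\le\overline{\mathfrak{d}}_{d-c+1}(X)+1=2n-1$) is useless for $n\ge 3$. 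A priori $\mathbb{F}(X)$ could have extra components, or the lines $\mathbb{L}_\Lambda$ attached to those $\Lambda\in\mathcal{U}(X)$ lying inside $\mathbb{P}^{r-n+2}$ could fail to sweep out all of $\mathbb{F}(X)\cap\mathbb{P}^{r-n+2}$. The paper does not prove this compatibility; it is structured to avoid it. For the linear case it proves Lemma \ref{lemma:linearity} by induction on $n$: the candidate space is $\Lambda:=\langle\mathbb{F}(X\cap\mathbb{H}_1),\mathbb{F}(X\cap\mathbb{H}_2)\rangle$ for two general hyperplanes (these two $(n-1)$-planes meet along the $(n-2)$-plane $\mathbb{F}(X\cap\mathbb{H}_1\cap\mathbb{H}_2)$, so $\dim\Lambda=n$), one checks $\mathbb{F}(X\cap\mathbb{H})\subseteq\Lambda$ for every general $\mathbb{H}$, and then Lemma \ref{lem:planaritycriterion}(b) -- which only needs the one-sided containment $\mathbb{L}_{\Lambda'}\subset\Lambda$ for general $\Lambda'$ -- forces $\mathbb{F}(X)=\Lambda$. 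None of this appears in your proposal.

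The second alternative is sketchier still. Saying the argument ``mirrors the surface case, intersecting the quadrics $W_{\mathbb{H}}$, $W_{\mathbb{H}'}$'' hides two substantive steps that the paper must carry out. First, for $n=3$ one has to show that $\mathrm{Join}(\mathbb{L}_\Lambda,X)$ is a quadric hypersurface of $\mathbb{P}^6$ at all; this uses $\dim\mathrm{Sing}(X)\le 1$, which comes from Theorem \ref{thm:sectionallyrational} together with ruling out the cone case, plus an Euler--characteristic computation (Corollary \ref{cor:singularityofsurface}) to see that the \emph{general} surface section is smooth with $\mathbb{F}=S(1,1,1)$, not just the one you started from. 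Second, to pass from $n=3$ to arbitrary $n$ the paper does not intersect joins at all: it shows $\operatorname{depth}(X)=n-1$ (via \cite[Theorem 4.3]{P}), deduces that $I_X$ has exactly three quadric generators whose restrictions to a general $\mathbb{P}^5$ generate $I_{X\cap\mathbb{P}^5}$ in degree $2$, and identifies $\mathbb{F}(X)$ with the intersection $W$ of those three quadrics. Your claim that ``the scroll structure is determined by its general three-dimensional linear sections'' is only invoked by the paper \emph{after} $W$ is known to be an irreducible $(n+1)$-fold of minimal degree $3$; as a standalone assertion about $\mathbb{F}(X)$ it again presupposes the unproven compatibility under slicing. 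On the positive side, your direct argument for (b)(ii)$\Rightarrow$(b)(i) -- exhibiting the $(2n-2)$-dimensional family of line sections of $Y$ as proper $(d-2)$-secants and quoting Theorem \ref{thm:maximaldimension} -- is a legitimate and slightly more economical alternative to the paper's route through general $\mathbb{P}^5$-sections.
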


To prove this theorem, we need the following two lemmas.

\begin{lemma}\label{lemma:linearity}
Let $X \subset \mathbb{P}^r$ be a variety of maximal sectional
regularity of dimension $n \geq 2$ and codimension $c \geq 3$. If
$\mathbb{F}(S)$ is a plane for a general linear surface section $S
\subset \mathbb{P}^{r-n+2}$ of $X$, then $\mathbb{F}(X)$ is an
$n$-dimensional linear space.
\end{lemma}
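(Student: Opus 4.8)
The plan is to argue by induction on $n$, reducing $X$ to a general hyperplane section and invoking Proposition~\ref{prop:planarity}, which says that $\mathbb{F}(X)$ is an $n$-dimensional linear space as soon as $\dim\mathbb{F}(X)=n$; thus the whole assertion is reduced to a dimension count. The base case $n=2$ is trivial, since then $S=X$ and a plane is a $2$-dimensional linear space. For $n\geq 3$ let $\mathbb{H}\subset\mathbb{P}^r$ be a general hyperplane and put $X':=X\cap\mathbb{H}$. Then $X'\subset\mathbb{H}=\mathbb{P}^{r-1}$ is a nondegenerate irreducible variety of dimension $n-1$, codimension $c$ and degree $d$ which is again of maximal sectional regularity; moreover a general linear surface section of $X'$ is a general linear surface section of $X$, hence has $\mathbb{F}(-)$ a plane. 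By the inductive hypothesis $\mathbb{F}(X')$ is an $(n-1)$-dimensional linear space. So it suffices to establish the identity
\[
\mathbb{F}(X)\cap\mathbb{H}=\mathbb{F}(X\cap\mathbb{H})
\]
for a general hyperplane $\mathbb{H}$: granting this, $\mathbb{F}(X)\cap\mathbb{H}$ is an $(n-1)$-plane and, $\mathbb{H}$ being general, $\dim\mathbb{F}(X)=n$, so Proposition~\ref{prop:planarity} applies.

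The inclusion ``$\supseteq$'' is immediate: for $\Lambda\in\mathcal{U}(X\cap\mathbb{H})$ we have $\Lambda\subset\mathbb{H}$ and $X\cap\Lambda=(X\cap\mathbb{H})\cap\Lambda$ is an integral curve of maximal regularity, so $\Lambda\in\mathcal{U}(X)$ and $\mathbb{L}_\Lambda\subset\mathbb{F}(X)\cap\mathbb{H}$; taking the closure of the union over such $\Lambda$ gives $\mathbb{F}(X\cap\mathbb{H})\subseteq\mathbb{F}(X)\cap\mathbb{H}$. For ``$\subseteq$'' I would use the incidence variety
\[
J:=\overline{\{(\Lambda,p)\in\mathcal{U}(X)\times\mathbb{P}^r\mid p\in\mathbb{L}_\Lambda\}},
\]
which is irreducible, dominates $\mathbb{G}(c+1,\mathbb{P}^r)$ (of dimension $(c+2)(n-1)$) with one-dimensional fibres, and whose second projection is onto $\mathbb{F}(X)$. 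Hence for $p$ general in $\mathbb{F}(X)$ the set $G_p:=\{\Lambda\in\mathcal{U}(X)\mid p\in\mathbb{L}_\Lambda\}$ has dimension $(c+2)(n-1)+1-\dim\mathbb{F}(X)$, which is $\geq c+1$ because $\dim\mathbb{F}(X)\leq r=n+c$ and $n\geq 3$. Now $G_p$ is contained in $Z_p:=\{\Lambda\in\mathbb{G}(c+1,\mathbb{P}^r)\mid p\in\Lambda\}$, on which the stabilizer of $p$ in $\operatorname{Aut}(\mathbb{P}^r)$ acts transitively, and for a hyperplane $\mathbb{H}\ni p$ the locus $\{\Lambda\in Z_p\mid\Lambda\subset\mathbb{H}\}$ has codimension $c+1$ in $Z_p$ and sweeps out a single orbit of that stabilizer as $\mathbb{H}$ varies among hyperplanes through $p$. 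Kleiman's theorem (in the form already used for Lemma~\ref{2.2'' Lemma}) then gives $G_p\cap\{\Lambda\subset\mathbb{H}\}\neq\emptyset$ for a general such $\mathbb{H}$. In other words, for general $(\mathbb{H},p)$ with $p\in\mathbb{F}(X)\cap\mathbb{H}$ there is $\Lambda\in\mathcal{U}(X)$ with $\Lambda\subset\mathbb{H}$ and $p\in\mathbb{L}_\Lambda$; such a $\Lambda$ lies in $\mathcal{U}(X\cap\mathbb{H})$ with the same extremal secant line, so $p\in\mathbb{F}(X\cap\mathbb{H})$. Letting $\mathbb{H}$ vary, the general point of $\mathbb{F}(X)\cap\mathbb{H}$ lies in the closed set $\mathbb{F}(X\cap\mathbb{H})$, whence ``$\subseteq$''.

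Putting the two steps together, $\mathbb{F}(X)\cap\mathbb{H}=\mathbb{F}(X\cap\mathbb{H})$ is an $(n-1)$-dimensional linear space, so $\dim\mathbb{F}(X)=n$ and $\mathbb{F}(X)$ is an $n$-dimensional linear space by Proposition~\ref{prop:planarity}.

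I expect the main obstacle to be the general-position (Kleiman) step: one must check that the fibre $G_p$ and the moving family of sub-Grassmannians $\{\Lambda\subset\mathbb{H}\}\cap Z_p$, with $\mathbb{H}$ running through the hyperplanes containing the \emph{fixed} point $p$, are in relative general position for the transitive action of the stabilizer of $p$, and that ``general'' may be chosen uniformly as $p$ ranges over a dense subset of $\mathbb{F}(X)$. A secondary point needing routine justification is that a general linear surface section of $X\cap\mathbb{H}$ is a general linear surface section of $X$, which is what feeds the induction.
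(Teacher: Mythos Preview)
Your inductive set-up and the easy inclusion $\mathbb{F}(X\cap\mathbb{H})\subseteq\mathbb{F}(X)\cap\mathbb{H}$ are fine, and the reduction to Proposition~\ref{prop:planarity} via $\dim\mathbb{F}(X)=n$ is a reasonable strategy. The gap is exactly where you flag it, but it is more than a routine check: Kleiman's theorem in the form of \cite[Corollary~4]{K} used for Lemma~\ref{2.2'' Lemma} only asserts that for general $g$ the intersection $g(\{\Lambda\in Z_p:\Lambda\subset\mathbb{H}_0\})\cap G_p$ is either empty or pure of the expected dimension; it does \emph{not} guarantee nonemptiness. In a Grassmannian this failure is genuine: identifying $Z_p\cong\mathbb{G}(c,\mathbb{P}^{r-1})$, the subvariety $V=\{\Lambda':q\in\Lambda'\}$ for a fixed point $q$ has dimension $c(n-1)\geq c+1$ (when $n\geq 3$), yet $V\cap\{\Lambda'\subset\mathbb{H}'\}=\emptyset$ for every hyperplane $\mathbb{H}'$ not containing $q$. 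So the bound $\dim G_p\geq c+1$ alone cannot force $G_p\cap\{\Lambda\subset\mathbb{H}\}\neq\emptyset$; one would need to know something about the Schubert class of $\overline{G_p}$, and you have no such information. In fact, unwinding what the nonemptiness says, it is exactly the statement that a general point of $\mathbb{F}(X)\cap\mathbb{H}$ lies in $\mathbb{F}(X\cap\mathbb{H})$, which is equivalent to the upper bound $\dim\mathbb{F}(X)\leq n$ you are trying to establish --- so the argument is circular at this point.

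The paper avoids this entirely by a more elementary construction. Rather than cutting $\mathbb{F}(X)$ with one general hyperplane, it takes \emph{two} general hyperplanes $\mathbb{H}_1,\mathbb{H}_2$; by induction each $\Lambda_i:=\mathbb{F}(X\cap\mathbb{H}_i)$ is an $(n-1)$-plane, and $\mathbb{F}(X\cap\mathbb{H}_1\cap\mathbb{H}_2)$ is an $(n-2)$-plane contained in $\Lambda_1\cap\Lambda_2$, so $\Lambda:=\langle\Lambda_1,\Lambda_2\rangle$ is an explicit candidate $n$-plane. For a further general hyperplane $\mathbb{H}$ one has
\[
\mathbb{F}(X\cap\mathbb{H})=\langle\mathbb{F}(X\cap\mathbb{H}\cap\mathbb{H}_1),\,\mathbb{F}(X\cap\mathbb{H}\cap\mathbb{H}_2)\rangle\subset\langle\Lambda_1,\Lambda_2\rangle=\Lambda,
\]
whence $\mathbb{L}_{\Lambda'}\subset\Lambda$ for general $\Lambda'\in\mathcal{U}(X)$, and Lemma~\ref{lem:planaritycriterion}(b) gives $\mathbb{F}(X)=\Lambda$ directly. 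No transversality argument and no Schubert-class computation enter.
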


\begin{proof}
We use induction on $n \geq 2$.

The statement is obvious for $n=2$.

Suppose that $n \geq 3$. By induction hypothesis, if $\mathbb{H}_1$
and $\mathbb{H}_2$ are general hyperplanes of $\mathbb{P}^r$ then
$\Lambda_1 := \mathbb{F}(X \cap \mathbb{H}_1)$ and $\Lambda_2 :=
\mathbb{F}(X \cap \mathbb{H}_2)$ are $(n-1)$-dimensional linear
spaces. Furthermore, $\mathbb{F}(X \cap \mathbb{H}_1 \cap
\mathbb{H}_2)$ is an $(n-2)$-dimensional linear subspace (either by
Notation and Remarks \ref{nar:curveofmaxreg}(A) for $n=3$ and by
induction if $n >3$), which is contained in $\Lambda_1 \cap
\Lambda_2$. Thus the linear space
\begin{equation*}
\Lambda := \langle \Lambda_1 , \Lambda_2 \rangle
\end{equation*}
is of dimension $n$. Now, let $\mathbb{H}$ be a general hyperplane.
Then $\mathbb{F}(X \cap \mathbb{H})$ is an $(n-1)$-dimensional
linear space. Also we know that $\mathbb{F}(X \cap \mathbb{H} \cap
\mathbb{H}_1)$ and $\mathbb{F}(X \cap \mathbb{H} \cap \mathbb{H}_2)$
are $(n-2)$-dimensional linear subspaces (again, either by Notation
and Remarks \ref{nar:curveofmaxreg}(A) for $n=3$ or by induction if
$n >3$) and hence that
\begin{equation*}
\mathbb{F}(X \cap \mathbb{H}) = \langle \mathbb{F}(X \cap \mathbb{H}
\cap \mathbb{H}_1) , \mathbb{F}(X \cap \mathbb{H} \cap \mathbb{H}_2)
\rangle.
\end{equation*}
Since $\mathbb{F}(X \cap \mathbb{H} \cap \mathbb{H}_1)$ and
$\mathbb{F}(X \cap \mathbb{H} \cap \mathbb{H}_2)$ are respectively
linear subspaces of $\Lambda_1$ and $\Lambda_2$, it follows that
$\mathbb{F}(X \cap \mathbb{H})$ is contained in $\Lambda$. By Lemma
\ref{lem:planaritycriterion}, we conclude that $\mathbb{F}(X)$ is
equal to the $n$-dimensional linear space $\Lambda$.
\end{proof}

\begin{lemma}\label{lemma:threefoldexceptionalcase}
Suppose that $\rm{char}(\Bbbk)=0$ and let $X \subset \mathbb{P}^6$
be a $3$-dimensional variety of maximal sectional regularity such
that $\mathbb{F}(X \cap \mathbb{H})=S(1,1,1)$ for a hyperplane
$\mathbb{H}=\mathbb{P}^5$. Then
\begin{itemize}
\item[\rm{(a)}] $\mathbb{F}(S)=S(1,1,1)$ for a general hyperplane section $S \subset \mathbb{P}^5$ of $X$.
\item[\rm{(b)}] $\mathbb{F}(X)$ is the rational normal fourfold scroll $S(0,1,1,1)$ and $X$ is contained in $\mathbb{F}(X)$ as a divisor linearly equivalent to $H + (d-3)F$, where $H$ is a hyperplane divisor of $S(0,1,1,1)$ and $F$ is a linear $3$-space in $S(0,1,1,1)$.
\end{itemize}
\end{lemma}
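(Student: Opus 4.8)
The plan is to obtain (a) from the classification of surfaces of maximal sectional regularity by a rigidity argument, and then to use (a) to pin down $\mathbb{F}(X)$ in (b) by sweeping out the hyperplane--section scrolls.

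\emph{Part (a).} For a general hyperplane $\mathbb{H}' = \mathbb{P}^5$ the surface $X \cap \mathbb{H}'$ is again of maximal sectional regularity, so by Theorem~\ref{thm:classificationsurface} its extremal variety is either a plane or a copy of $S(1,1,1)$; equivalently $\dim \mathbb{F}(X\cap\mathbb{H}') \in \{2,3\}$. Since $\{\mathbb{H}' : \dim\mathbb{F}(X\cap\mathbb{H}')\le 2\}$ is open in the irreducible set of hyperplanes $\mathbb{H}'$ for which $X\cap\mathbb{H}'$ is of maximal sectional regularity, the general surface section has either all planar or all $S(1,1,1)$ extremal varieties. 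Also, for \emph{every} hyperplane $\mathbb{H}''$ with $X\cap\mathbb{H}''$ of maximal sectional regularity one has $\mathbb{F}(X\cap\mathbb{H}'')\subseteq\mathbb{F}(X)\cap\mathbb{H}''$: the extremal line of each general linear curve section of $X\cap\mathbb{H}''$ is the extremal line of a general linear curve section of $X$, and $\mathbb{F}(X)$ is closed. If the general surface section had a planar extremal variety, then Lemma~\ref{lemma:linearity} would make $\mathbb{F}(X)$ a linear space, and the above inclusion would force $\mathbb{F}(X\cap\mathbb{H})$ to be linear, hence a plane, contradicting $\mathbb{F}(X\cap\mathbb{H}) = S(1,1,1)$. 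Hence $\mathbb{F}(X\cap\mathbb{H}') = S(1,1,1)$ for general $\mathbb{H}'$.

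\emph{Part (b), first half.} Fix a general $\mathbb{H}'$; by (a) and Theorem~\ref{thm:classificationsurface}(b), $S_{\mathbb{H}'} := \mathbb{F}(X\cap\mathbb{H}') = S(1,1,1)\subset\mathbb{H}'$ and $X\cap\mathbb{H}' \subset S_{\mathbb{H}'}$ is a divisor linearly equivalent to $H+(d-3)F$. By the inclusion noted in (a), $S_{\mathbb{H}'}\subseteq\mathbb{F}(X)$; since $\dim(X\cap\mathbb{H}') = 2 = \dim X - 1$, having $X\cap\mathbb{H}'\subseteq\mathbb{F}(X)$ for general $\mathbb{H}'$ forces $X\subseteq\mathbb{F}(X)$. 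Conversely, for a general $\Lambda = \mathbb{P}^4 \in \mathcal{U}(X)$ choose a general hyperplane $\mathbb{H}'\supseteq\Lambda$; then $X\cap\Lambda\subset S_{\mathbb{H}'}\cap\Lambda = S(1,2)$ and, by Proposition~\ref{prop:2.4}, $\mathbb{L}_\Lambda$ is the unique line section of this $S(1,2)$, so $\mathbb{L}_\Lambda\subseteq S_{\mathbb{H}'}$. Taking closures, $\mathbb{F}(X) = \overline{\bigcup_{\mathbb{H}'} S_{\mathbb{H}'}}$; thus $\mathbb{F}(X)$ is irreducible, nondegenerate in $\mathbb{P}^6$, contains $X$, and its general hyperplane section contains a copy of $S(1,1,1)$.

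\emph{Part (b), second half.} From $\mathbb{F}(X)\cap\mathbb{H}'\supseteq S_{\mathbb{H}'}$ we get $\dim\mathbb{F}(X)\ge 4$, while $\dim\mathbb{F}(X)\ne 3$ (otherwise $\mathbb{F}(X) = X$ would be a $3$-plane by Proposition~\ref{prop:planarity}, against nondegeneracy). For the upper bound we argue as in the proof of Proposition~\ref{prop:structureofF(X)}(b): applied to $X\cap\mathbb{H}'$, that argument shows the extremal lines $\mathbb{L}_\Lambda$ with $\Lambda\subseteq\mathbb{H}'$ form a $2$-dimensional family of pairwise disjoint lines sweeping out exactly $S_{\mathbb{H}'}$, and feeding this fibrewise disjointness into the incidence correspondence $\{(x,\mathbb{L}) : x\in\mathbb{L}\}\subset\mathbb{P}^6\times\mathbb{G}(1,\mathbb{P}^6)$, together with $\overline{\mathfrak{d}}_{d-c+1}(X) = 2n-2 = 4$ from Theorem~\ref{thm:maximaldimension}, gives $\dim\mathbb{F}(X)\le 4$. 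Hence $\dim\mathbb{F}(X) = 4$, so $\mathbb{F}(X)\cap\mathbb{H}'$ --- being $3$-dimensional, irreducible and containing $S(1,1,1)$ --- equals $S(1,1,1)$, and $\mathbb{F}(X)$ is a nondegenerate irreducible $4$-fold in $\mathbb{P}^6$ of degree $\deg S(1,1,1) = 3 = \codim + 1$, i.e. of minimal degree; it is therefore a rational normal $4$-fold scroll, and the only one with general hyperplane section $S(1,1,1)$ is $S(0,1,1,1)$. Finally, since $a_3 = 1 > 0$ the divisor class group of $S(0,1,1,1)$ is freely generated by $H$ and a linear $3$-space $F$ (Notation and Remarks~\ref{nar:descriptionscrolls}(B)); restriction to a general $\mathbb{H}'$ carries $H$ to $H$ and $F$ to a ruling plane of $S(1,1,1)$, so from $X\cap\mathbb{H}'\sim H+(d-3)F$ we read off that $X\sim H+(d-3)F$ on $S(0,1,1,1)$, which completes (b).

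The hard part is the bound $\dim\mathbb{F}(X)\le 4$: the bare incidence--correspondence estimate only yields $\dim\mathbb{F}(X)\le 2n-1 = 5$, and excluding the possibility that the extremal secant lines fill a whole hypersurface of $\mathbb{P}^6$ forces one to transport the fibrewise disjointness from the surface case carefully through the $6$-dimensional family of hyperplane sections. The remaining ingredients --- the containment $X\subseteq\mathbb{F}(X)$, the identity $\mathbb{F}(X) = \overline{\bigcup_{\mathbb{H}'} S_{\mathbb{H}'}}$, and the recognition of a minimal-degree $4$-fold with the prescribed hyperplane section as $S(0,1,1,1)$ --- are comparatively routine.
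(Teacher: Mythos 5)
Your proposal has two genuine gaps, one in each part, and in both places the paper uses a different mechanism that you would need to import to close them. In part (a) everything rests on the inclusion $\mathbb{F}(X\cap\mathbb{H})\subseteq\mathbb{F}(X)\cap\mathbb{H}$ for the \emph{given} hyperplane $\mathbb{H}$, which may be special. Your justification --- that an extremal line of a general curve section of $X\cap\mathbb{H}$ is an extremal line of a general curve section of $X$ --- fails precisely there: a general $(c+1)$-plane $\Lambda$ contained in a fixed special $\mathbb{H}$ is not a general member of $\mathbb{G}(c+1,\mathbb{P}^r)$, so $\Lambda$ need not lie in $\mathcal{U}(X)$, and $\mathbb{L}_\Lambda$ is only known to lie in $\Sigma(X)=\overline{\Sigma^{\circ}_{d-c+1}(X)}$, which is a priori larger than the union of lines defining $\mathbb{F}(X)$ (compare the proof of Proposition \ref{prop:structureofF(X)}, where $\Sigma^{\circ}_{d-c+1}(X)\setminus\Xi(X)$ is controlled but not empty). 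The openness of $\{\mathbb{H}':\dim\mathbb{F}(X\cap\mathbb{H}')\le 2\}$ is likewise asserted, not proved. The paper transfers the hypothesis from the special $\mathbb{H}$ to the general hyperplane by a numerical invariant instead: by Theorem \ref{thm:classificationsurface}(b) the hypothesis forces $X\cap\mathbb{H}$ to be smooth, hence $\delta(X\cap\mathbb{H})=0$ and $\chi(\mathcal{O}_{X\cap\mathbb{H}}(t))=d\binom{t+1}{2}+t+1$ by Corollary \ref{cor:singularityofsurface}; since the Hilbert polynomial of a hyperplane section is determined by that of $X$, the general section $S$ has the same $\chi$, is therefore smooth, and the smooth-versus-singular dichotomy in Theorem \ref{thm:classificationsurface} then excludes the planar case. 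Some such rigid invariant is needed; the purely incidence-theoretic argument you give does not reach the special fibre.

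In part (b) you correctly identify the bound $\dim\mathbb{F}(X)\le 4$ as the crux, but you do not prove it: the final paragraph describes what a proof would have to accomplish (``transport the fibrewise disjointness \ldots carefully'') rather than doing it, and the incidence-correspondence count from Theorem \ref{thm:maximaldimension} only gives $\dim\varphi(\psi^{-1}(\overline{\Sigma^{\circ}_{d-c+1}(X)}))\le 5$ unless you show that through a general point of the sweep there passes a positive-dimensional family of extremal lines --- which is exactly what the disjointness within each $S_{\mathbb{H}'}$ does \emph{not} give. The paper avoids any such dimension count: it forms the quadrics $Q_{\Lambda_i}=\mathrm{Join}(\mathbb{L}_{\Lambda_i},X)$ for two general $\Lambda_1,\Lambda_2\in\mathcal{U}(X)$, uses Remark \ref{rmk:threequadrics} (namely $I(S)_2=I(\mathbb{F}(S))_2$) to see that a general hyperplane section of $Q_{\Lambda_1}\cap Q_{\Lambda_2}$ is $S(1,1,1)\cup\mathbb{P}^3$, and concludes that $Q_{\Lambda_1}\cap Q_{\Lambda_2}=S(0,1,1,1)\cup\mathbb{P}^4$ with $X$ a divisor on the scroll; the identification $\mathbb{F}(X)=S(0,1,1,1)$ then follows because the scroll is cut out by quadrics and contains $X$. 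Your closing steps (recognizing a minimal-degree fourfold with general hyperplane section $S(1,1,1)$ as $S(0,1,1,1)$, and reading off the divisor class by restriction) are fine, but they sit on top of the unproved dimension bound.
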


\begin{proof}
\rm{(a)} By Theorem \ref{thm:classificationsurface}(c) and Corollary
\ref{cor:singularityofsurface}, we have
\begin{equation*}
\chi (\mathcal{O}_{X \cap \mathbb{H}} (t)) = d {{t+1} \choose {2}} +
t + 1.
\end{equation*}
Clearly, this implies that the Euler-Poincar$\acute{e}$
characteristic $\chi (\mathcal{O}_X (t))$ is of the form
\begin{equation*}
\chi (\mathcal{O}_X (t)) = d {{t+2} \choose {3}}  + {{t+1} \choose
{2}} + t + \chi_0 (\mathcal{O}_X (1))
\end{equation*}
and hence for a general linear surface section $S \subset
\mathbb{P}^5$ of $X$, we have
\begin{equation*}
\chi (\mathcal{O}_S (t)) = d {{t+1} \choose {2}}  + t + 1.
\end{equation*}
Thus $S$ is a smooth surface by Corollary
\ref{cor:singularityofsurface}. Now, from the classification result
in Theorem \ref{thm:classificationsurface} it follows that
$\mathbb{F}(S)=S(1,1,1)$.

\noindent \rm{(b)} Recall that $X$ is a linear projection of a
threefold rational normal scroll $\widetilde{X} \subset \mathbb{P}^{d+2}$ (cf.
Corollary \ref{cor:linearprojection}). If $\widetilde{X}$ is a cone
over the rational normal curve, then $X$ should be a cone over a
curve of maximal regularity and hence $\mathbb{F}(S)$ is a plane for
a general hyperplane section $S \subset \mathbb{P}^5$ of $X$, a
contradiction. Thus $\rm{Sing}(\widetilde{X})$ is at most a point.
By combining this with the fact that $\dim~\rm{Sing}(\pi_{\Lambda})
\leq 1$ (cf. Theorem \ref{thm:sectionallyrational}), we know that
$\dim ~ \rm{Sing}(X) \leq 1$. Thus for general $\Lambda \in
\mathcal{U}(X)$, the intersection $X \cap \mathbb{L}_{\Lambda}$ is
contained in the smooth locus of $X$ and hence the join $Q_{\Lambda}
:= \rm{Join}(\mathbb{L}_{\Lambda},X)$ is a quadratic hypersurface of
$\mathbb{P}^6$ (cf. Notation and Remark
\ref{notrmk:fourfoldscroll}). Now, choose two general members
$\Lambda_1 , \Lambda_2 \in \mathcal{U}(X)$. Thus we have
\begin{equation*} X \subset Q_{\Lambda_1} \cap Q_{\Lambda_2} \subset
\mathbb{P}^6 .
\end{equation*}
We will see that the intersection $Q_{\Lambda_1} \cap Q_{\Lambda_2}$
is reducible. Indeed, let $\mathbb{H}$ be a general hyperplane and
$Q_{\Lambda_i ,\mathbb{H}}=Q_{\Lambda_i} \cap \mathbb{H}$ ($i=1,2$).
Then
\begin{equation*}
S := X \cap \mathbb{H} \subset Q_{\Lambda_1 ,\mathbb{H}} \cap
Q_{\Lambda_2 ,\mathbb{H}}.
\end{equation*}
Note that $I(S)_2 = I(\mathbb{F}(S))_2$ (cf. Remark
\ref{rmk:threequadrics}) and hence $Q_{\Lambda_1 ,\mathbb{H}}$ and
$Q_{\Lambda_2 ,\mathbb{H}}$ are contained in $I(\mathbb{F}(S))_2$.
Therefore the intersection $Q_{\Lambda_1 ,\mathbb{H}} \cap
Q_{\Lambda_2 ,\mathbb{H}}$ is the union of the scroll
$\mathbb{F}(S)=S(1,1,1)$ and a linear subspace $\mathbb{P}^3$ of
$\mathbb{H}$. Then, it is clear that $Q_{\Lambda_1} \cap
Q_{\Lambda_2}$ should be the union of a scroll $Y:=S(0,1,1,1)$ and a
$4$-dimensional linear space. Obviously, our $X$ is contained in $Y$
as a divisor. Also from the divisor class of $S$ in $S(1,1,1)$, we
know that $X$ is linearly equivalent to $H+(d-3)F$. Finally,
$\mathbb{F}(X) \subset Y$ since $Y$ contains $X$ and it is cut out
by quadrics. On the other hand, $Y \cap \mathbb{H} = \mathbb{F}(X
\cap \mathbb{H}) \subset \mathbb{F}(X)$ for a general hyperplane
$\mathbb{H}$ of $\mathbb{P}^6$. So, we get the desired equality
$\mathbb{F}(X)=Y$.
\end{proof}

Now we give the\\

\noindent {\bf Proof of Theorem
\ref{thm:classificationhigherdimensional}.} Let $S \subset
\mathbb{P}^{c+2}$ be a general linear surface section of $X$. Then
$S$ is a surface of maximal sectional regularity and either
$\mathbb{F}(S)$ is a plane or else $c=3$, $\mathbb{F}(S) = S(1,1,1)$
and $S \subset \mathbb{F}(S)$ (cf. Theorem
\ref{thm:classificationsurface}).

If $\mathbb{F}(S)$ is a plane for a general linear surface section
$S \subset \mathbb{P}^{c+2}$, then $\mathbb{F}(X)$ is an
$n$-dimensional linear space by Lemma \ref{lemma:linearity}.

Now, consider the case where $c=3$ and there exists a
$3$-dimensional linear section $T \subset \mathbb{P}^6$ of $X$ which
is a variety of maximal sectional regularity and which has an
integral hyperplane section $S \subset \mathbb{P}^5$ of maximal
sectional regularity such that $\mathbb{F}(S)=S(1,1,1)$. Then Lemma
\ref{lemma:threefoldexceptionalcase}(b) shows that
$\mathbb{F}(T)=S(0,1,1,1)$ and $T$ is contained in $\mathbb{F}(T)$
as a divisor linearly equivalent to $H + (d-3)F$, where $H$ is a
hyperplane divisor of $S(0,1,1,1)$ and $F$ is a linear $3$-space in
$S(0,1,1,1)$. We first claim that for a general linear subspace
$\mathbb{P}^5 \subset \mathbb{P}^{n+3}$, the surface $X \cap
\mathbb{P}^5$ satisfies the property that $\mathbb{F}(X \cap
\mathbb{P}^5)=S(1,1,1)$. Indeed, by the same argument as in the
proof of Lemma \ref{lemma:threefoldexceptionalcase}(a), $S$ and $X
\cap \mathbb{P}^5$ have the same Euler-Poincar$\acute{e}$
characteristic. Thus our claim is verified by Theorem
\ref{thm:classificationsurface}. Now, observe that $\rm{depth}(T)$
is equal to $2$ (cf. \cite[Theorem 4.3]{P}) and hence
$\rm{depth}(X)= n-1$. In particular, $I_X$ and $I_S$ require the
same number of quadratic generators. Then it follows by Remark
\ref{rmk:threequadrics} that $I_X$ contains exactly three
$\Bbbk$-linearly independent quadrics. Let $\{ Q_1 , Q_2 , Q_3 \}$
be a basis for $H^0 (\mathbb{P}^{n+3},\mathcal{I}_X (2))$ and
consider the closed subset $W \subset \mathbb{P}^{n+3}$ defined as
the intersection of the three hyperquadrics $Q_1$, $Q_2$ and $Q_3$.
We claim that
\begin{equation*}
W =S(\underbrace{0,\ldots,0}_{(n-2)-\rm{times}},1,1,1).
\end{equation*}
Indeed, for a general linear subspace $\mathbb{P}^5 \subset
\mathbb{P}^{n+3}$ consider the quadrics $Q_{i,\mathbb{P}^5} := Q_i
|_{\mathbb{P}^5}$ ($i=1,2,3$). Since $\rm{depth}(X)= n-1$, we know
that $\{ Q_{1,\mathbb{P}^5}, Q_{2,\mathbb{P}^5}, Q_{3,\mathbb{P}^5}
\}$ is a basis for $H^0 (\mathbb{P}^5 , \mathcal{I}_{X \cap
\mathbb{P}^5} (2))$. This implies that $W \cap \mathbb{P}^5$ which
is the intersection of the quadrics $Q_{1,\mathbb{P}^5}$,
$Q_{2,\mathbb{P}^5}$ and $Q_{3,\mathbb{P}^5}$ is precisely equal to
the threefold scroll $\mathbb{F}(X \cap \mathbb{P}^5)$. Therefore
$W$ contains a nondegenerate $(n+1)$-dimensional irreducible variety
$W'$ of degree $3$. Since $W'$ is a variety of minimal degree and
hence cut out by exactly three quadrics, we conclude that $W=W'$.
Also since $W \cap \mathbb{P}^5$ is equal to $S(1,1,1)$, it follows
that $W$ is as above. Finally, note that $\mathbb{F}(X) \subset W$
since $W$ is cut out by quadrics. On the other hand, for a general
linear subspace $\mathbb{P}^5 \subset \mathbb{P}^{n+3}$ we have
\begin{equation*}
W \cap \mathbb{P}^5 = \mathbb{F}(X \cap \mathbb{P}^5 ) \subset
\mathbb{F}(X)
\end{equation*}
which means that $W \subset \mathbb{F}(X)$. Therefore $W =
\mathbb{F}(X)$.
\smallskip

\noindent (a) See Theorem \ref{thm:linearextremalvariety}.
\smallskip

\noindent \rm{(b)} (i) $\Longrightarrow$ (ii) : Note that $X$ is
contained in $\mathbb{F}(X)$ since $\mathbb{F}(X)$ is not a linear
space. For general $\Lambda \in \mathcal{U}(X)$, we have
\begin{equation*}
\mathcal{C}_\Lambda = X \cap \Lambda \subset \mathbb{F}(X) \cap
\Lambda = S(1,2) \subset \mathbb{P}^4
\end{equation*}
and Proposition \ref{prop:2.4}.(b) yields that the divisor $X$ is
linearly equivalent to $H + (d-3)F$.

(i) $\Longleftarrow$ (ii) : Let $\mathbb{P}^5 \in
\mathbb{G}(5,\mathbb{P}^{n+3})$ be a general member. Then we have
\begin{equation*}
S:= X \cap \mathbb{P}^5  \subset Y \cap \mathbb{P}^5 =  S(1,1,1)
\subset \mathbb{P}^5
\end{equation*}
where $S$ is apparently a divisor of $S(1,1,1)$ linearly equivalent
to $H_0 + (d-3)F_0$, where $H_0$ is the hyperplane divisor and $F_0$
is a ruling plane of $S(1,1,1)$. Therefore $S$ is a surface of
maximal sectional regularity and $\mathbb{F}(S)=S(1,1,1)$ (cf.
Theorem \ref{thm:classificationsurface}(b)). It follows that
\begin{equation*}
Y \cap \mathbb{P}^5 = S(1,1,1) \subset \mathbb{F}(X)
\end{equation*}
for general $\mathbb{P}^5 \in \mathbb{G}(5,\mathbb{P}^{n+3})$ and
hence $Y \subset \mathbb{F}(X)$. Then we get $Y = \mathbb{F}(X)$
from our classification result of $\mathbb{F}(X)$ in the present
theorem. \qed \\

\noindent {\bf Acknowledgement.} The first named author thanks to the Korea University Seoul, to
the Mathematisches Forschungsinstitut Oberwolfach, to the Martin-Luther Universit\"{a}t
Halle and to the Deutsche Forschungsgemeinschaft for their hospitality and the financial
support provided during the preparation of this work. The second named author was
supported by the Nation Researcher program 2010-0020413 of NRF and MEST. The third
named author was supported by the NRF-DAAD GEnKO Program (NRF-2011-0021014).

\end{document}